\documentclass[11pt,letterpaper]{amsart}
\usepackage[utf8]{inputenc}
\usepackage{kantlipsum} 
\usepackage{comment}
\calclayout
\newif\ifdraft
\draftfalse
\usepackage{amsmath,amsfonts,amsthm,mathrsfs}
\usepackage{amssymb, stmaryrd}

\usepackage[unicode,bookmarks,pagebackref]{hyperref}
\usepackage[dvipsnames]{xcolor}
\hypersetup{colorlinks=true,citecolor=NavyBlue,linkcolor=Brown,urlcolor=Orange}

\usepackage[alphabetic]{amsrefs}

\usepackage{enumitem}

\usepackage{chngcntr}

\ifdraft
\usepackage[notcite,notref,color]{showkeys}

\definecolor{labelkey}{gray}{0.5}
\fi

\usepackage{tikz}

\usepackage{tikz-cd}

\usetikzlibrary{matrix,arrows}
\newlength{\myarrowsize} 

\pgfarrowsdeclare{cmto}{cmto}{
	\pgfsetdash{}{0pt} 
	\pgfsetbeveljoin 
	\pgfsetroundcap 
	\setlength{\myarrowsize}{0.6pt}
	\addtolength{\myarrowsize}{.5\pgflinewidth}
	\pgfarrowsleftextend{-4\myarrowsize-.5\pgflinewidth} 
	\pgfarrowsrightextend{.8\pgflinewidth}
}{
	\setlength{\myarrowsize}{0.6pt} 
  	\addtolength{\myarrowsize}{.5\pgflinewidth}  
	\pgfsetlinewidth{0.5\pgflinewidth}
	\pgfsetroundjoin
	\pgfpathmoveto{\pgfpoint{1.5\pgflinewidth}{0}}
	\pgfpatharc{-109}{-170}{4\myarrowsize}
	\pgfpatharc{10}{189}{0.58\pgflinewidth and 0.2\pgflinewidth}
	\pgfpatharc{-170}{-115}{4\myarrowsize+\pgflinewidth}
	\pgfpathclose
	\pgfusepathqfillstroke
	\pgfpathmoveto{\pgfpoint{1.5\pgflinewidth}{0}}
	\pgfpatharc{109}{170}{4\myarrowsize}
	\pgfpatharc{-10}{-189}{0.58\pgflinewidth and 0.2\pgflinewidth}
	\pgfpatharc{170}{115}{4\myarrowsize+\pgflinewidth}
	\pgfpathclose
	\pgfusepathqfillstroke
	\pgfsetlinewidth{2\pgflinewidth}
}

\pgfarrowsdeclare{cmonto}{cmonto}{
	\pgfsetdash{}{0pt} 
	\pgfsetbeveljoin 
	\pgfsetroundcap 
	\setlength{\myarrowsize}{0.6pt}
	\addtolength{\myarrowsize}{.5\pgflinewidth}
	\pgfarrowsleftextend{-4\myarrowsize-.5\pgflinewidth} 
	\pgfarrowsrightextend{.8\pgflinewidth}
}{
	\setlength{\myarrowsize}{0.6pt} 
  	\addtolength{\myarrowsize}{.5\pgflinewidth}  
	\pgfsetlinewidth{0.5\pgflinewidth}
	\pgfsetroundjoin
	\pgfpathmoveto{\pgfpoint{1.5\pgflinewidth}{0}}
	\pgfpatharc{-109}{-170}{4\myarrowsize}
	\pgfpatharc{10}{189}{0.58\pgflinewidth and 0.2\pgflinewidth}
	\pgfpatharc{-170}{-115}{4\myarrowsize+\pgflinewidth}
	\pgfpathclose
	\pgfusepathqfillstroke
	\pgfpathmoveto{\pgfpoint{1.5\pgflinewidth}{0}}
	\pgfpatharc{109}{170}{4\myarrowsize}
	\pgfpatharc{-10}{-189}{0.58\pgflinewidth and 0.2\pgflinewidth}
	\pgfpatharc{170}{115}{4\myarrowsize+\pgflinewidth}
	\pgfpathclose
	\pgfusepathqfillstroke
	\pgfpathmoveto{\pgfpoint{1.5\pgflinewidth-0.3em}{0}}
	\pgfpatharc{-109}{-170}{4\myarrowsize}
	\pgfpatharc{10}{189}{0.58\pgflinewidth and 0.2\pgflinewidth}
	\pgfpatharc{-170}{-115}{4\myarrowsize+\pgflinewidth}
	\pgfpathclose
	\pgfusepathqfillstroke
	\pgfpathmoveto{\pgfpoint{1.5\pgflinewidth-0.3em}{0}}
	\pgfpatharc{109}{170}{4\myarrowsize}
	\pgfpatharc{-10}{-189}{0.58\pgflinewidth and 0.2\pgflinewidth}
	\pgfpatharc{170}{115}{4\myarrowsize+\pgflinewidth}
	\pgfpathclose
	\pgfusepathqfillstroke
	\pgfsetlinewidth{2\pgflinewidth}
}

\pgfarrowsdeclare{cmhook}{cmhook}{
	\pgfsetdash{}{0pt} 
	\pgfsetbeveljoin 
	\pgfsetroundcap 
	\setlength{\myarrowsize}{0.6pt}
	\addtolength{\myarrowsize}{.5\pgflinewidth}
	\pgfarrowsleftextend{-4\myarrowsize-.5\pgflinewidth} 
	\pgfarrowsrightextend{.8\pgflinewidth}
}{
	\setlength{\myarrowsize}{0.6pt} 
  	\addtolength{\myarrowsize}{.5\pgflinewidth}  
 	\pgfsetdash{}{0pt}
	\pgfsetroundcap
	\pgfpathmoveto{\pgfqpoint{0pt}{-4.667\pgflinewidth}}
	\pgfpathcurveto
    {\pgfqpoint{4\pgflinewidth}{-4.667\pgflinewidth}}
    {\pgfqpoint{4\pgflinewidth}{0pt}}
    {\pgfpointorigin}
	\pgfusepathqstroke
}

\newenvironment{diagram*}[2]{%
\[%
\begin{tikzpicture}[>=cmto,baseline=(current bounding box.center),%
	to/.style={->,font=\scriptsize,cap=round},%
	into/.style={cmhook->,font=\scriptsize,cap=round},%
	onto/.style={-cmonto,font=\scriptsize,cap=round},%
	math/.style={matrix of math nodes, row sep=#2, column sep=#1,%
		text height=1.5ex, text depth=0.25ex}]%
}{%
\end{tikzpicture}%
\]%
\ignorespacesafterend%
}

\newcommand{\Dmod}{\mathscr{D}}

\newcommand{\cohH}{\mathcal{H}}

\newcommand{\NN}{\mathbb{N}}
\newcommand{\ZZ}{\mathbb{Z}}
\newcommand{\QQ}{\mathbb{Q}}

\newcommand{\CC}{\mathbb{C}}

\DeclareMathOperator{\Gr}{Gr}
\DeclareMathOperator{\Spe}{Sp}
\DeclareMathOperator{\HRH}{HRH}
\DeclareMathOperator{\DR}{DR}

\newcommand{\shf}[1]{\mathscr{#1}}
\newcommand{\OX}{\shf{O}_X}

\def\overbar#1#2#3{{%
	\setbox0=\hbox{\displaystyle{#1}}%
	\dimen0=\wd0
	\advance\dimen0 by -#2 
	\vbox {\nointerlineskip \moveright #3 \vbox{\hrule height 0.3pt width \dimen0}%
		\nointerlineskip \vskip 1.5pt \box0}%
}}

\newcommand{\shO}{\shf{O}}

\newcommand{\cK}{\mathcal{K}}
\newcommand{\cL}{\mathcal{L}}
\newcommand{\cM}{\mathcal{M}}
\newcommand{\cN}{\mathcal{N}}
\newcommand{\cQ}{\mathcal{Q}}
\newcommand{\de}{\partial}

\makeatletter
\let\@@seccntformat\@seccntformat
\renewcommand*{\@seccntformat}[1]{%
  \expandafter\ifx\csname @seccntformat@#1\endcsname\relax
    \expandafter\@@seccntformat
  \else
    \expandafter
      \csname @seccntformat@#1\expandafter\endcsname
  \fi
    {#1}%
}
\newcommand*{\@seccntformat@subsection}[1]{%
  \textbf{\csname the#1\endcsname.}
}
\makeatother

\makeatletter
\let\@paragraph\paragraph
\renewcommand*{\paragraph}[1]{%
	\vspace{0.3\baselineskip}%
	\@paragraph{\textit{#1}}%
}
\makeatother

\counterwithin{equation}{subsection}
\counterwithout{subsection}{section}
\counterwithin{figure}{subsection}

\newtheorem{theorem}[equation]{Theorem}
\newtheorem*{theorem*}{Theorem}
\newtheorem{lemma}[equation]{Lemma}
\newtheorem*{lemma*}{Lemma}
\newtheorem{corollary}[equation]{Corollary}
\newtheorem{proposition}[equation]{Proposition}

\newtheorem*{proposition*}{Proposition}
\newtheorem{conjecture}[equation]{Conjecture}

\theoremstyle{definition}

\newtheorem*{definition*}{Definition}
\newtheorem{remark}[equation]{Remark}

\newtheorem{example}[equation]{Example}
\newtheorem*{example*}{Example}
\newtheorem*{problem*}{Problem}

\theoremstyle{plain}

\newcommand{\theoremref}[1]{\hyperref[#1]{Theorem~\ref*{#1}}}
\newcommand{\lemmaref}[1]{\hyperref[#1]{Lemma~\ref*{#1}}}
\newcommand{\definitionref}[1]{\hyperref[#1]{Definition~\ref*{#1}}}
\newcommand{\propositionref}[1]{\hyperref[#1]{Proposition~\ref*{#1}}}
\newcommand{\conjectureref}[1]{\hyperref[#1]{Conjecture~\ref*{#1}}}
\newcommand{\corollaryref}[1]{\hyperref[#1]{Corollary~\ref*{#1}}}
\newcommand{\exampleref}[1]{\hyperref[#1]{Example~\ref*{#1}}}
\newcommand{\setupref}[1]{\hyperref[#1]{Set-up~\ref*{#1}}}
\newcommand{\remarkref}[1]{\hyperref[#1]{Remark~\ref*{#1}}}
\newcommand{\claimref}[1]{\hyperref[#1]{Claim~\ref*{#1}}}
\newcommand{\figureref}[1]{\hyperref[#1]{Figure~\ref*{#1}}}

\makeatletter
\let\old@caption\caption
\renewcommand*{\caption}[1]{%
	\setcounter{figure}{\value{equation}}%
	\stepcounter{equation}%
	\old@caption{#1}\relax%
}
\makeatother

\newcounter{intro}

\newtheorem{intro-conjecture}[intro]{Conjecture}
\newtheorem{intro-corollary}[intro]{Corollary}
\newtheorem{intro-theorem}[intro]{Theorem}

\def\cB{\mathcal{B}}

\def\cK{\mathcal{K}}

\newcommand{\parref}[1]{\hyperref[#1]{\S\ref*{#1}}}

\makeatletter
\newcommand*\if@single[3]{%
  \setbox0\hbox{${\mathaccent"0362{#1}}^H$}%
  \setbox2\hbox{${\mathaccent"0362{\kern0pt#1}}^H$}%
  \ifdim\ht0=\ht2 #3\else #2\fi
  }
\newcommand*\rel@kern[1]{\kern#1\dimexpr\macc@kerna}
\newcommand*\widebar[1]{\@ifnextchar^{{\wide@bar{#1}{0}}}{\wide@bar{#1}{1}}}
\newcommand*\wide@bar[2]{\if@single{#1}{\wide@bar@{#1}{#2}{1}}{\wide@bar@{#1}{#2}{2}}}
\newcommand*\wide@bar@[3]{%
  \begingroup
  \def\mathaccent##1##2{%
    \if#32 \let\macc@nucleus\first@char \fi
    \setbox\z@\hbox{$\macc@style{\macc@nucleus}_{}$}%
    \setbox\tw@\hbox{$\macc@style{\macc@nucleus}{}_{}$}%
    \dimen@\wd\tw@
    \advance\dimen@-\wd\z@
    \divide\dimen@ 3
    \@tempdima\wd\tw@
    \advance\@tempdima-\scriptspace
    \divide\@tempdima 10
    \advance\dimen@-\@tempdima
    \ifdim\dimen@>\z@ \dimen@0pt\fi
    \rel@kern{0.6}\kern-\dimen@
    \if#31
      \overline{\rel@kern{-0.6}\kern\dimen@\macc@nucleus\rel@kern{0.4}\kern\dimen@}%
      \advance\dimen@0.4\dimexpr\macc@kerna
      \let\final@kern#2%
      \ifdim\dimen@<\z@ \let\final@kern1\fi
      \if\final@kern1 \kern-\dimen@\fi
    \else
      \overline{\rel@kern{-0.6}\kern\dimen@#1}%
    \fi
  }%
  \macc@depth\@ne
  \let\math@bgroup\@empty \let\math@egroup\macc@set@skewchar
  \mathsurround\z@ \frozen@everymath{\mathgroup\macc@group\relax}%
  \macc@set@skewchar\relax
  \let\mathaccentV\macc@nested@a
  \if#31
    \macc@nested@a\relax111{#1}%
  \else
    \def\gobble@till@marker##1\endmarker{}%
    \futurelet\first@char\gobble@till@marker#1\endmarker
    \ifcat\noexpand\first@char A\else
      \def\first@char{}%
    \fi
    \macc@nested@a\relax111{\first@char}%
  \fi
  \endgroup
}
\makeatother

\usepackage{xpatch}
\makeatletter   
\xpatchcmd{\@tocline}
{\hfil\hbox to\@pnumwidth{\@tocpagenum{#7}}\par}
{\ifnum#1<0\hfill\else\dotfill\fi\hbox to\@pnumwidth{\@tocpagenum{#7}}\par}
{}{}
\makeatother    

\makeatletter
\def\l@section{\@tocline{1}{0pt}{0pc}{0pc}{\bfseries}}
 \def\l@subsection{\@tocline{2}{0pt}{4pc}{6pc}{}}
\def\l@subsubsection{\@tocline{3}{0pt}{8pc}{8pc}{}}
 \makeatother

\begin{document}

\author[B.~Dirks]{Bradley Dirks}

\address{Department of Mathematics, Stony Brook University, Stony Brook, NY 11794-3651, USA}

\email{bradley.dirks@stonybrook.edu}

\author[S.~Olano]{Sebasti\'{a}n Olano}

\address{Department of Mathematics, University of Toronto, 40 St. George St., Toronto, Ontario Canada, M5S 2E4}

\email{seolano@math.toronto.edu}

\author[D.~Raychaudhury]{Debaditya Raychaudhury}

\address{Department of Mathematics and Statistics, University of New Mexico, Albuquerque, NM 87131, USA}

\email{rcdeba@gmail.com}

\subjclass[2020]{14B05, 14F10, 32S35.}
\thanks{This material is based upon work supported by the National Science Foundation under Grant No. DMS-1926686 and MSPRF DMS-2303070. The research of DR is partially supported by an AMS-Simons Travel Grant.}

\title[A Hodge Theoretic Generalization of $\mathbb{Q}$-Homology Manifolds: LCI Case]{A Hodge Theoretic Generalization of $\mathbb{Q}$-Homology Manifolds II: Local Complete Intersections}


\begin{abstract} Recently, the authors introduced and studied a singularity invariant of a complex algebraic variety $Z$, written $\HRH(Z)$ (for ``Hodge rational homology'' manifold level). In this paper, we focus on local complete intersection subvarieties. We relate $\HRH(Z)$ to various well-known invariants, like Bernstein--Sato polynomials and the Dimca-Maisonobe-Saito spectrum. In the hypersurface case it turns out that $\HRH(Z)$ can be completely characterized by these invariants, though higher codimension case is more subtle.
\end{abstract}

\vspace*{-25pt}

\maketitle




\section{Introduction}
For hypersurfaces, the condition of being a rational homology manifold admits a simple numerical characterization. Suppose $Z=V(f)$ is a hypersurface in a smooth variety $X$, and let $b_f(s)$ be the Bernstein--Sato polynomial. Then $Z$ is a rational homology manifold if and only if the only integral root of $b_f(s)$ is $-1$, with multiplicity one. Moreover, this is equivalent to the vanishing of the unipotent vanishing cycles $\varphi_{f,1}(\shO_X)$ \cite{VanCycleRHM}*{Thm. 3}.

Hodge-theoretic weakenings of the rational homology manifold condition for singular varieties \cites{DOR1, ParkPopaSymmetry} have attracted interest due to their connections with higher singularities \cites{CDM, DOR1, PSV} and failures of $\mathbb Q$-factoriality \cite{ParkPopaQFact}. In this paper, we relate the invariant $\HRH(Z)$ introduced in \cite{DOR1} to other well-known invariants in the case where $Z$ is a complete intersection. Throughout, we let $X$ be a smooth connected complex algebraic variety with $\dim X=n$.

We now fix an embedding $i \colon Z \hookrightarrow X$ as a complete intersection subvariety of pure codimension $r$. Moreover, we fix $f_1,\dots, f_r \in \shO_X(X)$ such that $Z=V(f_1,\dots, f_r)$. In this setting, there are several singularity invariants of $Z$ defined through $\Dmod$-module and mixed Hodge module theory. We will not recall all of their definitions in the introduction, but we describe the main objects needed to state our results.

We first discuss the hypersurface case, so we take for now $r=1$ and $f_1=f$. In this case, the picture can be understood using the well-known properties of the $V$-filtration, reviewed in \S \ref{subsect-VFiltSpecSpec} below.
This case also serves as the model for the invariants that appear for local complete intersections.


In the hypersurface case there are three well-established singularity invariants to consider (their local notions at $x\in Z$ being denoted by a subscript $x$ as in \theoremref{thmhypersurfaces}): 
\begin{itemize} \item $\widetilde{\alpha}_{\ZZ}(f)$, the smallest (negated) integer root of the reduced Bernstein--Sato polynomial $\widetilde{b}_f(s)$.

\item ${\rm Sp}_{\min,\ZZ}(Z,x)$, the smallest integer spectral number in the Steenbrink spectrum \cite{SteenbrinkSpectrum}.

\item $p(\varphi_{f,1}(\shO_X),F)$, the index where the first jump in the Hodge filtration on the mixed Hodge module $\varphi_{f,1}(\shO_X)$ occurs.
\end{itemize}

Conventionally, these invariants are $+\infty$ if the set of which they are the ``smallest element'' is empty. As stated above, $\widetilde{\alpha}_{\ZZ}(f) = +\infty$ if and only if $p(\varphi_{f,1}(\shO_X),F) = +\infty$ if and only if $Z$ is a rational homology manifold. Note also that up to a shift, $p(\varphi_{f,1}(\shO_X),F)$ is the invariant $\widetilde{\alpha}^{\rm min. int}(f)$ defined in \cite{JKSY}. The latter is the first integer jumping index of the microlocal $V$-filtration \cite{SaitoMicrolocal}. 

These are related to each other in the following theorem.

\begin{intro-theorem}\label{thmhypersurfaces} Assume $Z$ is defined inside an $n$-dimensional smooth variety $X$ by a regular element $f\in \shO_X(X)$. Then for any $x\in Z$, we have
\[ \widetilde{\alpha}_{\ZZ,x}(f) -2 \leq {\rm HRH}_x(Z) = p(\varphi_{f,1}(\shO_X)_x,F)+ n - 2 \leq {\rm Sp}_{\min, \ZZ}(Z,x)-2,\]
where the second inequality is an equality if $x$ is an isolated singular point.
\end{intro-theorem}

\begin{remark} In \cite{JKSY}, it was noted that in the isolated hypersurface singularities case, we have \[\widetilde{\alpha}^{\rm min. int}(f) = {\rm Sp}_{\min,\ZZ}(Z,x),\] which proves the last assertion in the theorem statement. We give another proof of this below (see \remarkref{rmk-HypersurfaceCase}).
\end{remark}

The Bernstein--Sato characterization of rational homology manifolds is specific to hypersurfaces. In higher codimension, the analogous statement is false: Torrelli's example gives a complete intersection which is a rational homology manifold, but for which the corresponding reduced Bernstein--Sato polynomial has an integral root; see \exampleref{eg-Torrelli}. To study local complete intersections, one must also replace the unipotent vanishing cycles of a hypersurface; a natural replacement is the unipotent Verdier specialization along $Z$. The results below develop the corresponding comparison picture for local complete intersections, relating the resulting specialization invariant to $\HRH$, Bernstein--Sato roots, and the integral spectrum.

We now return to the local complete intersection case. The central construction we use to study local complete intersections is the \emph{unipotent Verdier specialization} of $\shO_X$ along $Z$. This is a monodromic mixed Hodge module ${\rm Sp}_Z(\shO_X)^\ZZ$ on $X\times \mathbb A^r_z$ which sits in a short exact sequence
\[ 0 \to i_* \QQ^H_{Z\times \mathbb A^r_z}[n] \to {\rm Sp}_Z(\shO_X)^\ZZ \to Q^\ZZ \to 0,\]
where the left-hand side is the trivial Hodge module on $Z\times \mathbb A^r_z$ and the module $Q^\ZZ$ is defined through this short exact sequence, though it was observed in \cite{DirksMicrolocal} that it is related to another important construction we will see below. 

 The first integer invariant we consider is $p(Q^\ZZ,F)$, where $F$ is the Hodge filtration on the underlying $\Dmod$-module of $Q$. In the hypersurface case, we have $p(Q^\ZZ,F) = p(\varphi_{f,1}(\shO_X),F) -1$. 

The module $Q^\ZZ$ is related to the integral spectrum of $Z$ at $x\in Z$, defined by \cite{DMS}. As in the hypersurface case, let ${\rm Sp}_{\rm min,\ZZ}(Z,x)$ denote the smallest non-zero element of the integral spectrum of $Z$ at $x$. 

The main result about these invariants is the following and is proved in \propositionref{prop-InequalityPRS} and \propositionref{prop-SpectrumBound}.

\begin{intro-theorem} \label{thm-main} Let $x\in Z$ be a point in a local complete intersection subvariety of the smooth variety $X$. Then we have the following inequalities:
\begin{enumerate} 
\item $ p(Q^\ZZ,F) + n -1 \leq \HRH(Z)$.
\item $p(Q_x^\ZZ,F) + n +1 \leq {\rm Sp}_{\min,\ZZ}(Z,x)$.
\end{enumerate}
\end{intro-theorem}

In the isolated hypersurface case, the integer spectrum completely determines ${\rm HRH}_x(Z)$. This is no longer the case in the isolated LCI case, but we still have the following inequality.

\begin{intro-theorem}\label{thmspectrum}
    Let $Z$ have an isolated local complete intersection singularity at $x\in Z$. Then \[ \HRH_x(Z) \geq \Spe_{\min,\ZZ}(Z,x) - 2.\]
\end{intro-theorem}

It is clear from \theoremref{thm-main} that $Q^\ZZ = 0$ implies $Z$ is a rational homology manifold and that ${\rm Sp}_{\min,\ZZ}(Z,x) = + \infty$ for all $x\in Z$. However, this condition is too strong to be equivalent to the rational homology manifold property of $Z$, as we see in \exampleref{eg-Torrelli} below. In any case, we can characterize the vanishing $Q^{\ZZ} = 0$  as we explain now.

In \cites{Mustata,CDMO}, singularities of $f_1,\dots, f_r \in \shO_X(X)$ are related to those of the \emph{general linear combination hypersurface} $g = \sum_{i=1}^r y_i f_i$ defined on $Y = X\times \mathbb A^r_y$. Let $U = Y \setminus (X \times \{0\})$. The hypersurface defined by $g\vert_U$ is used in \cites{CDMO,CDM, DirksMicrolocal} to study higher singularities of $Z$. We have the following characterization of the vanishing of $Q^\ZZ$ in terms of the rational homology manifold property of $V(g\vert_U) \subseteq U$. We use the notation $\HRH(V(g\vert_U)) = \HRH(g\vert_U)$.

\begin{intro-theorem}\label{thm-openU} In the notation above, we have
\[ Q^\ZZ = 0 \iff V(g\vert_U) \text{ is a rational homology manifold} \iff \varphi_{g\vert_U,1}(\shO_U) = 0.\]
Moreover, we have inequalities
\begin{enumerate}
    \item $\widetilde{\alpha}_\ZZ(g\vert_U) \leq p(Q^\ZZ,F) + n + r$,
    \item $\widetilde{\alpha}_{\ZZ}(g\vert_U) -r -1 \leq \HRH(Z)$,
    \item $\HRH(g\vert_U) - r +1 \leq \HRH(Z)$.
\end{enumerate}
\end{intro-theorem}



In fact, the results above follow from more precise statements tracking how the Hodge filtrations on the different monodromic pieces of $Q^\ZZ$ relate to one another, measured by certain ``missed jumps'' between them (see \remarkref{rmk-jumpsInterpretation}). This also leads to criteria for when the inequalities above are equalities (see \propositionref{prop-equalityPRS}, \corollaryref{cor-HRHU}, and \corollaryref{corprsbound}). \exampleref{ex-jumpsliminal} shows that these missed jumps can behave in quite different ways. It would be interesting to understand whether they admit a more systematic interpretation.

The tuple $f_1,\dots, f_r$ has a \emph{Bernstein--Sato polynomial} $b_f(s)$ \cite{BMS} which is divisible by $(s+r)$ in this case, and so we can consider the \emph{reduced Bernstein--Sato polynomial} $\widetilde{b}_f(s) = b_f(s)/(s+r)$. Define
\[ \widetilde{\alpha}_\ZZ(Z) = \min\{j \in \ZZ \mid \widetilde{b}_f(-j) = 0\},\]
and conventionally set $\widetilde{\alpha}_{\ZZ}(Z) = +\infty$ if there are no integer roots of $\widetilde{b}_f(s)$. We can also consider the local notions for $x\in Z$, denoted with a subscript $x$.

To relate this invariant to the ones we have already discussed, we need to assume that $Z$ has rational singularities.

\begin{intro-corollary}\label{cor-bs}
If $Z$ has rational singularities, then
\begin{enumerate} 
\item $\widetilde{\alpha}_{\ZZ}(Z) \leq p(Q^\ZZ,F) + n +r,$ 
\item $\widetilde{\alpha}_{\ZZ}(Z)-r - 1 \leq \HRH(Z),$
\item $\widetilde{\alpha}_{\ZZ,x}(Z) -r +1 \leq {\rm Sp}_{\min,\ZZ}(Z,x).$
\end{enumerate}

In particular, if $Z$ has rational singularities, then $\widetilde{\alpha}_\ZZ(Z) = + \infty$ implies $Z$ is a rational homology manifold.
\end{intro-corollary}

\begin{remark} We pose Conjecture \ref{conj-BFunction} on the structure of certain Bernstein--Sato polynomials. The validity of that conjecture would illuminate the connection between $\widetilde{\alpha}_{\ZZ}(Z)$ and $Q^{\ZZ}$, as in \corollaryref{cor-QMinExpZ}.
\end{remark}

In fact, we show a more precise statement, but it becomes rather technical and is discussed at the end of Section \ref{sect-LCICase} below.
We remark that even in the isolated hypersurface singularities case, it is possible to have strict inequality \[\widetilde{\alpha}_{\ZZ,x}(Z) < {\rm Sp}_{\min,\ZZ}(Z,x),\]
see \cite{JKSY}*{Rmk 3.4d}. Moreover, the converse to the last statement of the corollary is not true (\exampleref{eg-Torrelli}).

\medskip

\noindent \textbf{Plan of the paper.} \textcolor{black}{Section \ref{sec-Prelim} contains a review of the V-filtration, the Specialization construction, and of the definition of the spectrum given by \cite{DMS}.}
Section \ref{sect-LCICase} studies the integer invariants $\HRH(Z)$, ${\rm Sp}_{\min,\ZZ}(Z,x)$, $\widetilde{\alpha}_\ZZ(Z)$, and $p(Q^\ZZ,F)$. The invariant $\HRH(Z)$ is characterized via the $V$-filtration and \theoremref{thmhypersurfaces} is proven in \S \ref{sectionhypersurfaces}. We prove \theoremref{thm-main} ($=$ \propositionref{prop-InequalityPRS} and \propositionref{prop-SpectrumBound}), \theoremref{thm-openU}, and \corollaryref{cor-bs} in \S \ref{sec-intinv}. The proof of \theoremref{thmspectrum} is contained in \S \ref{sec-iso}. \textcolor{black}{Examples of various features are discussed in Section \ref{sec-ex}.}

\medskip

\noindent {\bf Acknowledgments.} We would like to thank Bhargav Bhatt, Qianyu Chen, Radu Laza, Lauren\c{t}iu Maxim, Mircea Musta\c{t}\u{a}, Sung Gi Park, Mike Perlman, Mihnea Popa, Sridhar Venkatesh and Anh Duc Vo for many conversations on the topics in this paper.

\section{Preliminaries} \label{sec-Prelim}
\subsection{V-filtration} \label{subsect-VFiltSpecSpec}
To define nearby and vanishing cycles for mixed Hodge modules, Saito uses the $V$-filtration of Kashiwara and Malgrange. As this is arguably the most important construction for what follows, we remind the reader of its definition.

For the smooth variety $X$, let $T = X\times \mathbb{A}^r_t$ be the trivial vector bundle over $X$ with fiber coordinates $t_1,\dots, t_r$. We have $\Dmod_T = \Dmod_X\langle t_1,\dots ,t_r,\de_{t_1},\dots ,\de_{t_r}\rangle$, where as usual $[\de_{t_i},t_j] = \delta_{ij}$, the Kronecker delta. This ring carries a $\ZZ$-indexed, decreasing filtration
\[ V^\bullet \Dmod_T = \left\{ \sum_{\beta,\gamma} P_{\beta,\gamma} t^\beta \de_t^\gamma \mid P_{\beta,\gamma} \in \Dmod_X,\, |\beta| \geq |\gamma| + \bullet\right\},\]
so that, for example, $t_i \in V^1 \Dmod_T$, $\de_{t_j} \in V^{-1} \Dmod_T$, and
\[ V^j\Dmod_T \cdot V^k \Dmod_T \subseteq V^{j+k}\Dmod_T.\]

If $\cM$ is a regular holonomic $\Dmod_T$-module underlying a mixed Hodge module, then it admits a $\QQ$-indexed $V$-filtration along $(t_1,\dots, t_r)$. This is the unique exhaustive, decreasing, $\QQ$-indexed filtration $(V^\alpha \cM)_{\alpha \in \QQ}$ which is discrete\footnote{Meaning there exists $\{\alpha_j\}_{j\in \ZZ}$ with $\lim_{j\to \pm \infty} \alpha_j =  \pm \infty$ so that $V^\alpha \cM$ is constant for $\alpha \in (\alpha_j,\alpha_{j+1})$.} and left continuous\footnote{Meaning $V^\alpha \cM = \bigcap_{\beta < \alpha} V^\beta \cM$.}, and which satisfies the following properties:
\begin{enumerate} \item For any $\alpha \in \QQ, j\in \ZZ$, we have containment $V^j\Dmod_T \cdot V^\alpha \cM \subseteq V^{\alpha+j}\cM$.

\item For $\alpha \gg 0, j \in \ZZ_{\geq 0}$, we have equality $V^j \Dmod_T \cdot V^\alpha \cM = V^{\alpha+j}\cM$.

\item For all $\alpha \in \QQ$, the $V^0\Dmod_T$-module $V^\alpha \cM$ is coherent.

\item For $s = -\sum_{i=1}^r \de_{t_i} t_i$ and for any $\alpha \in \QQ$, there exists some $N \gg 0$ such that 
\[(s+\alpha)^N V^\alpha \cM \subseteq \bigcup_{\beta > \alpha} V^\beta \cM = V^{>\alpha}\cM.\] In other words, $s+\alpha$ is nilpotent on ${\rm Gr}_V^\alpha(\cM) = V^\alpha \cM/ V^{>\alpha}\cM$.
\end{enumerate}

\begin{example} If $\sigma \colon X \to T$ is the inclusion of the zero section, then for any mixed Hodge module $N$ on $X$, the push forward $\sigma_* N$ underlies a mixed Hodge module on $T$. Its $\Dmod_T$-module can be written
\[ \sigma_+ \cN = \bigoplus_{\alpha \in \NN^r} \cN \de_t^\alpha\delta_0,\]
where $\delta_0$ is a formal symbol which is annihilated by $t_1,\dots,t_r$. Then
\[V^{\lambda}\sigma_+\cN = \bigoplus_{|\alpha|=0}^{\lfloor -\lambda\rfloor} \cN\de_t^\alpha \delta_0.\]
\end{example}

For $(\cM,F)$ a filtered $\Dmod_T$-module underlying a mixed Hodge module on $T$, we define Koszul-like complexes
\[ F_p A^\chi(\cM) = \left[F_p V^\chi \cM \xrightarrow[]{t} \bigoplus_{i=1}^r F_p V^{\chi+1} \cM \xrightarrow[]{t} \dots \xrightarrow[]{t} F_p V^{\chi+r} \cM\right],\]
\[ F_p B^\chi(\cM) = F_p A^\chi(\cM)/F_p A^{>\chi}(\cM) = \left[F_p {\rm Gr}_V^\chi(\cM) \xrightarrow[]{t} \bigoplus_{i=1}^r F_p {\rm Gr}_V^{\chi+1}(\cM) \xrightarrow[]{t} \dots \xrightarrow[]{t} F_p {\rm Gr}_V^{\chi+r}(\cM)\right],\]
\[ F_p C^\chi(\cM) = \left[F_{p-r}{\rm Gr}_V^{\chi+r}(\cM) \xrightarrow[]{\de_t} \bigoplus_{i=1}^r F_{p-r+1}{\rm Gr}_V^{\chi+r-1}(\cM) \xrightarrow[]{\de_t} \dots \xrightarrow[]{\de_t} F_p {\rm Gr}_V^{\chi} (\cM)\right].\]

The last condition in the definition of the $V$-filtration leads to the following acyclicity results for these complexes.

\begin{proposition}[\cite{CD}*{Thm. 3.1, 3.2}] \label{prop-KoszulAcyclic} For all $\chi\neq 0$, the complexes $B^\chi(\cM), C^\chi(\cM)$ are acyclic. For $\chi > 0$, the complex $A^\chi(\cM)$ is acyclic.
\end{proposition}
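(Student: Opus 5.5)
The approach is to reduce to the nilpotence of $s+\chi$ on each graded piece, and then handle the exact sequence $0 \to A^{>\chi} \to A^\chi \to B^\chi \to 0$ for the remaining claim.

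\emph{The complex $B^\chi(\cM)$.} Recall that the operators $\de_{t_i} t_i$ act on each ${\rm Gr}_V^\beta(\cM)$, and that $s=-\sum_i \de_{t_i}t_i$ commutes with every $t_j$ and $\de_{t_j}$ up to a shift: $s\, t_j = t_j (s+1)$ and $s\,\de_{t_j} = \de_{t_j}(s-1)$. I would define an endomorphism $\theta$ of the complex $B^\chi(\cM)$ which acts on the term sitting in ${\rm Gr}_V^{\chi+k}$ by $-(s+\chi+k)$. Because of the commutation rules, $\theta$ commutes with the Koszul differential given by the $t_i$, so it is a chain map. I would then check that $\theta$ is chain homotopic to zero. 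The homotopy is the contraction by the $\de_{t_i}$, which runs in the opposite direction. This is the standard Koszul identity $d h + h d = \sum_i \de_{t_i} t_i$, up to a degree-dependent correction, and that correction is exactly the shift by $\chi+k$ computed from the commutators $[\de_{t_i},t_i]=1$. Bookkeeping the signs and the constant is the only real computation, and I expect it to be the main obstacle.

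\emph{Consequence for $B^\chi$ and $C^\chi$.} Once the homotopy is in place, $\theta$ acts by zero on cohomology. On the other hand, on ${\rm Gr}_V^{\chi+k}$ the element $s+\chi+k$ is nilpotent by property (4) of the $V$-filtration. Hence $\theta+\chi$ is nilpotent, and so $\theta$ acts on cohomology as $\chi$ plus a nilpotent. For $\chi\neq 0$ this operator is invertible on cohomology, but it is also zero there, so the cohomology vanishes. Filtered compatibility is automatic, since $\theta$ and the homotopy shift $F$ compatibly; this explains the indexing $F_{p-r+k}$ in $C^\chi$. The argument for $C^\chi$ is identical, with the roles of $t$ and $\de_t$ swapped.

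\emph{The complex $A^\chi(\cM)$ for $\chi>0$.} Here I would use the short exact sequences $0\to A^{>\chi}\to A^\chi\to B^\chi\to 0$. Since $B^\beta$ is acyclic for every $\beta>0$ and the filtration is discrete, $A^\chi$ is quasi-isomorphic to $A^\beta$ for all $\beta\ge\chi$. For $\beta\gg0$, property (2) gives $V^{\beta+1}=\sum_i t_iV^\beta$, and in fact $t$ is a regular sequence on $V^\beta$, so the Koszul complex $A^\beta$ is acyclic. The regular sequence property I would take from the standard fact that for $\alpha\gg0$, $V^\alpha\cM$ is $(t)$-adically well behaved; I would derive it by descending induction on the graded pieces using the acyclicity of $B$. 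Finally, passing to $F_p$ uses the strictness of $F$ with respect to $V$ for mixed Hodge modules.
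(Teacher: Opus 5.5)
The paper gives no argument here beyond citing \cite{CD}. Your treatment of $B^\chi$ and $C^\chi$ is the standard argument: an Euler-type operator is null-homotopic via the $\de_t$-contraction and differs from a nonzero scalar by something nilpotent. As written, though, the constant is in the wrong place. Take $d(m e_I)=\sum_i t_i m\, e_i\wedge e_I$ and $h(m e_I)=\sum_i \de_{t_i} m\, \iota_i(e_I)$. Then on the term in ${\rm Gr}_V^{\chi+k}$ one computes $dh+hd=\sum_i \de_{t_i}t_i-k=-(s+k)$; the commutators $[\de_{t_i},t_i]=1$ produce only $-k$, never $\chi$.

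So the null-homotopic chain map is $-(s+k)=\chi+\theta$, not your $\theta=-(s+\chi+k)$. Your $\theta$ is itself nilpotent, so ``$\theta\simeq 0$'' would prove nothing, and ``$\theta+\chi$ is nilpotent'' is false. After the correction, $\chi+\theta$ is an automorphism of the complex for $\chi\neq 0$: invert it by the geometric series in the locally nilpotent $\theta$, which commutes with $d$. It induces zero on cohomology, so the cohomology vanishes, and the same computation works for $C^\chi$. Note also that $s\,t_j=t_j(s-1)$ and $s\,\de_{t_j}=\de_{t_j}(s+1)$, the reverse of what you wrote.

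Your remark that filtered compatibility is ``automatic'' is wrong. Both $h$ and $s$ raise $F$ by one, so nothing filtered follows; filtered acyclicity needs Hodge-theoretic input and holds only in the ranges of \propositionref{prop-KoszulFiltAcyclic}. It is not needed for this unfiltered statement.

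The genuine gap is in the $A^\chi$ part. Your reduction $A^\chi\simeq A^\beta$ for $\beta\ge\chi>0$ is fine, but the base case fails.
\begin{enumerate}
\item For $r\ge2$, $A^\beta$ is not the Koszul complex $K(t;V^\beta\cM)$. Its degree-$k$ term is $V^{\beta+k}\cM=(t)^kV^\beta\cM$, whereas $K(t;V^\beta\cM)$ has top cohomology $V^\beta\cM/V^{\beta+1}\cM$, which is typically nonzero.
\item The regularity claim is false. For $\cM=\shO_T$ one has $V^\beta\cM=(t_1,\dots,t_r)^m\shO_T$, and the class of $t_1^m$ in $V^\beta\cM/t_1V^\beta\cM$ is nonzero but killed by $t_2$.
\item No ``descending induction using acyclicity of $B$'' can supply a base case; this already affects injectivity of $t$ on $V^\beta\cM$ when $r=1$. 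Acyclicity of the $B^{\beta'}$ for $\beta'>0$ only shows that the cohomology of $A^\beta$ is independent of $\beta>0$. Since $V^\beta\cM$ is not $(t)$-adically complete, nothing forces this common value to vanish. For monodromic modules $A^\chi=\bigoplus_{\chi'\ge\chi}B^{\chi'}$ and there is nothing to prove; the whole content is passing from ${\rm Gr}_V$ back to $V$.
\end{enumerate}

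Property (2) does settle the top degree: $H^r(A^\beta)=V^{\beta+r}\cM/\sum_i t_iV^{\beta+r-1}\cM=0$ for $\beta\gg0$, hence $H^r(A^\chi)=0$. For $k<r$ you need an extra input that controls $V^\beta\cM$ as $\beta\to\infty$. In degree $0$, for instance, a section of $V^\chi\cM$ killed by all $t_i$ lies in the submodule of sections supported on $\{t=0\}$. That submodule's $V$-filtration vanishes in positive indices, by strictness of $V$ and the $\sigma_+\cN$ example. The middle degrees need an analogous separatedness or Artin--Rees type argument.
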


When $r=1$, this acyclicity means we have isomorphisms
\[ t \colon V^\alpha \cM \cong V^{\alpha+1}\cM \text{ for } \alpha > 0\]
\[ t\colon {\rm Gr}_V^\alpha(\cM) \cong {\rm Gr}_V^{\alpha+1}(\cM) \text{ for } \alpha \neq 0\]
\[ \de_t \colon {\rm Gr}_V^{\alpha+1}(\cM) \cong {\rm Gr}_V^{\alpha}(\cM) \text{ for } \alpha \neq 0.\]

For $r=1$, one of the properties which the filtered module $(\cM,F)$ must satisfy to underlie a mixed Hodge module on $T$ is that these isomorphisms are \emph{filtered isomorphisms} in certain ranges: specifically, Saito imposes that
\[ t \colon F_p V^\alpha \cM \cong F_p V^{\alpha+1}\cM \text{ for } \alpha > 0\]
\[ \de_t \colon F_p {\rm Gr}_V^{\alpha+1}(\cM) \cong F_{p+1}{\rm Gr}_V^{\alpha}(\cM) \text{ for } \alpha < 0.\]

Some immediate consequences of these conditions are the following:
\begin{proposition}[\cite{SaitoMHP}*{Sect. 3}]\label{prop-VFilt} Let $(\cM,F)$ underlie a mixed Hodge module on $T = X\times \mathbb A^1_t$. Let $j\colon X\times \mathbb G_m \to X\times \mathbb A^1_t$ be the inclusion of the complement of the zero section. Then
\begin{enumerate} \item For any $\lambda > 0$ and $p \in \ZZ$, we have
\begin{equation} \label{eq-HodgePieceV} F_p V^\lambda \cM = V^\lambda \cM \cap j_*(j^*(F_p \cM)).\end{equation}

\item For all $p\in \ZZ$, we have
\[ F_p \cM = \sum_{i\geq 0} \de_t^i(F_{p-i}V^0\cM).\]
\end{enumerate}

If $\cM$ has no sub-module supported on $\{t=0\}$, then the equality \eqref{eq-HodgePieceV} holds with $\lambda = 0$. In this case, we have
\[ F_p \cM = \sum_{i\geq 0} \de_t^i(V^0\cM \cap j_*j^*(F_{p-i}\cM)).\]
\end{proposition}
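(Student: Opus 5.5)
The plan is to derive all three assertions from the two filtered isomorphisms imposed by Saito (recalled just before the statement), together with \propositionref{prop-KoszulAcyclic} for $r=1$. Throughout, $F_pV^\lambda\cM$ means $F_p\cM\cap V^\lambda\cM$. I identify $j_*j^*(F_p\cM)$ with $\bigcup_k t^{-k}F_p\cM$ inside $\cM[t^{-1}]$. First I note that $t$ is injective on $V^{>0}\cM$: by \propositionref{prop-KoszulAcyclic}, $A^\chi(\cM)$ is acyclic for $\chi>0$, so $t\colon V^\chi\cM\to V^{\chi+1}\cM$ is an isomorphism. Hence $V^\lambda\cM\to\cM[t^{-1}]$ is injective for $\lambda>0$, and the intersection in \eqref{eq-HodgePieceV} makes sense.

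For (1), the inclusion $\subseteq$ is clear. Conversely, let $m\in V^\lambda\cM$ with $t^km\in F_p\cM$ for some $k\ge 0$. Then $t^km\in F_p\cM\cap V^{\lambda+k}\cM=F_pV^{\lambda+k}\cM$. Iterating the filtered isomorphism $t\colon F_pV^\alpha\cM\cong F_pV^{\alpha+1}\cM$ for $\alpha>0$ gives $F_pV^{\lambda+k}\cM=t^kF_pV^\lambda\cM$. So $t^km=t^km'$ with $m'\in F_pV^\lambda\cM$, and injectivity of $t$ on $V^\lambda\cM$ gives $m=m'$.

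For (2), the inclusion $\supseteq$ is trivial because $\de_t F_{q}\subseteq F_{q+1}$. For $\subseteq$, take $m\in F_p\cM$. By exhaustiveness, $m\in V^\alpha\cM$ for some $\alpha$. I argue by induction on the number of jumps of the discrete filtration in $[\alpha,0)$, simultaneously for all $p$. If $\alpha<0$, the class of $m$ lies in $F_p{\rm Gr}_V^\alpha\cM$. The filtered isomorphism $\de_t\colon F_{p-1}{\rm Gr}_V^{\alpha+1}\cM\cong F_p{\rm Gr}_V^\alpha\cM$ then lets me write $m=\de_t m'+m''$ with $m'\in F_{p-1}V^{\alpha+1}\cM$ and $m''\in F_pV^{>\alpha}\cM$. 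This lifting uses that $F_p{\rm Gr}_V$ is by definition the image of $F_pV$. Both $m'$ and $m''$ are closer to $V^0$, so by induction they lie in $\sum_i\de_t^iF_{p-1-i}V^0\cM$ and $\sum_i\de_t^iF_{p-i}V^0\cM$ respectively. Therefore $m$ lies in $\sum_i\de_t^iF_{p-i}V^0\cM$. Discreteness guarantees that the recursion terminates.

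For the final assertion, the step I expect to be the main obstacle is controlling $t$ on $V^0$. That $\cM$ has no submodule supported on $\{t=0\}$ is equivalent to injectivity of ${\rm var}=t\colon{\rm Gr}_V^0\cM\to{\rm Gr}_V^1\cM$. Since ${\rm var}$ underlies a morphism of mixed Hodge modules, it is strict, so $t{\rm Gr}_V^0\cM\cap F_p{\rm Gr}_V^1\cM=tF_p{\rm Gr}_V^0\cM$. Injectivity on ${\rm Gr}^0$ plus injectivity on $V^{>0}$ gives injectivity of $t$ on $V^0\cM$. Now let $m\in V^0\cM$ with $t^km\in F_p\cM$. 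By (1) applied to $tm\in V^1\cM$, we get $tm\in F_pV^1\cM$. By strictness, $tm=tm_0+v$ with $m_0\in F_pV^0\cM$ and $v\in F_pV^{>1}\cM=tF_pV^{>0}\cM$. Injectivity then yields $m\in F_pV^0\cM$. This proves \eqref{eq-HodgePieceV} for $\lambda=0$, and substituting it into (2) gives the last formula.
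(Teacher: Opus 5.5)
Your proposal is correct and takes the same route the paper intends. The paper gives no proof of its own: it cites Saito and presents the statement as an immediate consequence of the filtered isomorphisms $t\colon F_pV^\alpha\cM\cong F_pV^{\alpha+1}\cM$ for $\alpha>0$ and $\de_t\colon F_p{\rm Gr}_V^{\alpha+1}\cM\cong F_{p+1}{\rm Gr}_V^{\alpha}\cM$ for $\alpha<0$. You derive (1) and (2) from exactly these, and you add strictness of ${\rm var}$ (a morphism of mixed Hodge modules) for the case $\lambda=0$. One bookkeeping point in (2): take $\alpha$ to be the largest index with $m\in V^\alpha\cM$, which is a jump by left continuity and discreteness, so that the number of jumps in $[\alpha,0)$ actually decreases at each step of your induction.
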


For $r>1$, filtered acyclicity still holds in the corresponding ranges.

\begin{proposition}[\cites{CD,CDS}] \label{prop-KoszulFiltAcyclic} For all $\chi< 0$, the complex $C^\chi(\cM,F)$ is filtered acyclic. For $\chi > 0$, the complexes $A^\chi(\cM,F)$ and $B^\chi(\cM,F)$ are filtered acyclic.
\end{proposition}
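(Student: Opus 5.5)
We argue separately for the two parts of \propositionref{prop-KoszulFiltAcyclic}. In each case we reduce, as in the case $r=1$, to a filtered version of the Koszul acyclicity in \propositionref{prop-KoszulAcyclic}.

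\textbf{Reduction to $r=1$ for $\chi>0$.} The first step is to realize the $V$-filtration along $(t_1,\dots,t_r)$ through one-variable $V$-filtrations. We use the standard device of restricting to the blow-up or to the graph: consider $\cM$ on $T$ and the $\Dmod$-module $\Gamma_+\cM$ along a new coordinate $u$, or equivalently a monodromic deformation, as in \cites{CD,CDS}. The plan is to show that for $\chi>0$ the complex $F_pA^\chi(\cM)$ is the total complex of a Koszul complex built from the maps $t_i\colon F_pV^\alpha\cM\to F_pV^{\alpha+1}\cM$. The key input is the identity
\[
F_pV^\alpha\cM \;=\; V^\alpha\cM\cap j_*j^*F_p\cM \qquad (\alpha>0),
\]
the higher-codimension analogue of \propositionref{prop-VFilt}(1), where $j$ is now the inclusion of the complement of $X\times\{0\}$. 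Granting this, the $F_p$-pieces are cut out from the acyclic complex $A^\chi(\cM)$ by a condition that is checked away from the zero section. Away from the zero section some $t_i$ is invertible, so on the open set $\{t_i\neq 0\}$ the Koszul complex is split filtered acyclic, with homotopy given by $t_i^{-1}$ times the contraction.

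We then glue these local homotopies with a partition of unity in the monodromic grading. Concretely, we use the Euler-type operator $s$ and the fact that $s+\alpha$ is nilpotent on graded pieces. This shows that a cycle in $F_pA^\chi$ is a boundary of an element of $F_pA^\chi$: the preimage exists in $A^\chi$ by \propositionref{prop-KoszulAcyclic}, and it can be chosen to lie in $j_*j^*F_p$ by the local argument. Filtered acyclicity of $B^\chi$ for $\chi>0$ then follows from that of $A^\chi$ and $A^{>\chi}$ via the short exact sequence $0\to F_pA^{>\chi}\to F_pA^\chi\to F_pB^\chi\to 0$ and the long exact sequence, using discreteness of $V$ to pass to $A^{>\chi}=A^{\chi+\epsilon}$.

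\textbf{The case $\chi<0$ for $C^\chi$.} Here we use duality. The complex $C^\chi(\cM,F)$ should be identified, up to shift and twist, with the filtered dual of $B^{-\chi-r}(\mathbb D\cM,F)$ or a similar index. This identification comes from Saito's compatibility of $\mathrm{Gr}_V$ with duality, together with the fact that $\de_{t_i}$ is adjoint to $t_i$. Filtered acyclicity of $C^\chi$ for $\chi<0$ is then dual to that of $B^{\chi'}$ for $\chi'>0$.

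Alternatively, we can use the surjectivity statement $F_p\mathrm{Gr}_V^{\alpha-1}=\sum_i\de_{t_i}F_{p-1}\mathrm{Gr}_V^\alpha$ for $\alpha<0$. We would obtain this by the monodromic decomposition: $\mathrm{Gr}_V$ is a monodromic module, and for a monodromic mixed Hodge module the Fourier–Laplace transform exchanges $t$ and $\de_t$ while being filtered strict. That exchange converts $C^\chi$ into an $A/B$-type complex with positive index.

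\textbf{Main obstacle.} The hardest step is the gluing in the $\chi>0$ case: showing the preimage can be taken in $F_p$ rather than just in $V^\chi\cM$. This requires the formula $F_pV^\alpha=V^\alpha\cap j_*j^*F_p$ in codimension $r$, which is itself nontrivial. We would try to deduce it from the $r=1$ case by pulling back along the blow-up of $X\times\{0\}$, where the exceptional divisor is a hypersurface and \propositionref{prop-VFilt} applies directly. We would then push forward, using strictness of direct images of mixed Hodge modules.
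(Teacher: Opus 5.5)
The paper gives no proof of this statement; it imports it from \cite{CD} and \cite{CDS}. Judged on its own, your argument for $\chi>0$ has a genuine gap when $r\geq 2$. (For $r=1$ it goes through, since $U=X\times\mathbb G_m$ is affine over $X$ and $t^{-1}$ contracts the complex on all of $U$.)

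\textbf{The identity is part of the conclusion.} The identity $F_pV^\alpha\cM=V^\alpha\cM\cap j_*j^*F_p\cM$ is not an independent input. If $m\in V^\alpha\cM$ and $m|_U\in F_p\cM$, then for some $k$ we have $(t_1,\dots,t_r)^k m\subseteq F_p\cM\cap V^{\alpha+k}\cM=F_pV^{\alpha+k}\cM$. So the identity amounts to strictness of the first differential $F_pV^{\alpha'}\cM\to\bigoplus_i F_pV^{\alpha'+1}\cM$ for all $\alpha'\geq\alpha$, which is part of what you are proving. The blow-up reduction you propose for it is only sketched.

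\textbf{The gluing step fails.} This is the decisive problem. Even granting the identity, the local contractions $t_i^{-1}\iota_{e_i}$ on $U_i=\{t_i\neq 0\}$ do not produce a single primitive $b\in V^{\chi+k-1}\cM$ with $b|_U\in F_p\cM$. The reasons: $U$ is not affine over $X$; there is no $\shO$-linear partition of unity; and $s$ only decomposes ${\rm Gr}_V\cM$, it does not glue across the cover $\{U_i\}$. The obstruction is a \v{C}ech class of $F_p\cM|_U$, and nothing you use kills it.

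Here is a concrete instance. Let $X$ be a point, $r=2$, and $\cM=\shO_T$ with the good (but not Hodge) filtration $F_{-1}=0$, $F_0=t_1\shO_T$, $F_1=\shO_T$. Then:
\begin{itemize}
\item $V^\alpha\shO_T=(t_1,t_2)^{\max(0,\lceil\alpha\rceil-2)}$, and $A^1(\shO_T)$ is acyclic.
\item Your identity holds for every $\alpha>0$ by Hartogs, since $t_1\shO_T$ is reflexive.
\item The element $t_1\in F_0V^3\shO_T$ is locally on $U$ a $t$-boundary of sections of $F_0$: use $1$ on $U_1$ and $t_1/t_2$ on $U_2$.
\item Yet the last differential of $F_0A^1$ has image $t_1(t_1,t_2)\subsetneq t_1\shO_T=F_0V^3\shO_T$.
\end{itemize}
So every step you invoke is valid here while the conclusion fails. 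A genuinely Hodge-theoretic input is missing. The cited works obtain it by working with the monodromic module ${\rm Sp}(\cM)$, whose $z$-Koszul strands are exactly the complexes $B^\chi(\cM,F)$. Your deduction of $B^\chi$ from $A^\chi$ and $A^{>\chi}$ is fine, but it inherits this gap.

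\textbf{The case $\chi<0$.} The duality route does not work as stated for $r\geq 2$. The terms ${\rm Gr}_V^\chi\cM$ are in general not coherent over $\Dmod_X$: for $\shO_{\mathbb A^2}[t_1^{-1}]$ over a point, every integer graded piece is infinite-dimensional. So there is no termwise filtered dual of $C^\chi$, and you leave the index unspecified.

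Your Fourier--Laplace alternative is the right mechanism, but what makes it work is not strictness. It is the explicit Hodge filtration on the Fourier--Laplace transform of a monodromic mixed Hodge module, which shifts $F$ on the $\chi$-th piece by an amount depending on $\lceil\chi\rceil$. That shift turns the $F$-raising maps $\de_{z_i}$ into $F$-preserving multiplications by the dual coordinates. It thereby identifies $C^\chi(\cM,F)$ with a filtration-shifted $B^{-\chi}({\rm FL}({\rm Sp}(\cM)),F)$. This reduces the case $\chi<0$ to filtered acyclicity of $B^{>0}$ for monodromic modules, which is exactly the part your argument does not establish.
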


The complexes $B^0(\cM), C^0(\cM)$ are related to the restriction functors for mixed Hodge modules.

\begin{proposition}[\cite{CD}*{Thm. 1.2} and \cite{CDS}*{Thm. 1}] \label{prop-Restriction} Let $M \in {\rm MHM}(T)$. Then $B^0(\cM,F), C^0(\cM,F)$ are strictly filtered complexes. 

 Let $\sigma \colon X \to X\times \mathbb{A}^r_t$ be the zero section. We have filtered isomorphisms
\[ F_p \cohH^j \sigma^!(\cM) \cong F_p \cohH^j B^0(\cM) \text{ for all } j\in [0,r],\]
\[ F_p \cohH^j \sigma^*(\cM) \cong F_p \cohH^j C^0(\cM) \text{ for all } j\in [-r,\dots, 0].\]
\end{proposition}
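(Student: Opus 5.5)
The plan is to reduce to the codimension-one case, where the statement is part of Saito's theory. The reduction goes through the Verdier specialization, with an induction on $r$ carried out in the monodromic setting.

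\emph{Step 1: the case $r=1$.} For $M\in {\rm MHM}(X\times\mathbb A^1_t)$, Saito's description of $\sigma^!$ and $\sigma^*$ via nearby and vanishing cycles expresses them as cones of ${\rm var}\colon \varphi_{t,1}\to\psi_{t,1}(-1)$ and ${\rm can}\colon \psi_{t,1}\to\varphi_{t,1}$. On underlying filtered $\Dmod$-modules these are identified as follows:
\begin{itemize}
\item $\varphi_{t,1}$ corresponds to $({\rm Gr}_V^0\cM,F)$;
\item $\psi_{t,1}$ corresponds to $({\rm Gr}_V^1\cM,F)$, or equivalently to ${\rm Gr}_V^0$ shifted by $\de_t$;
\item ${\rm var}$ corresponds to $t$ and ${\rm can}$ to $\de_t$.
\end{itemize}
This yields exactly the two-term complexes $B^0(\cM,F)$ and $C^0(\cM,F)$. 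Strictness holds because ${\rm can}$ and ${\rm var}$ are morphisms of mixed Hodge modules, hence strict for $F$. The Hodge filtration on the cohomology of the cone is then the induced one.

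\emph{Step 2: reduction to monodromic modules.} Let ${\rm Sp}(M)$ denote the Verdier specialization of $M$ along $X\times\{0\}$, with underlying module $\bigoplus_\alpha {\rm Gr}_V^\alpha\cM$ and the Hodge filtration $F_p{\rm Gr}_V^\alpha$. Two facts are needed.
\begin{itemize}
\item $\sigma^!M\cong\sigma^!{\rm Sp}(M)$ and $\sigma^*M\cong\sigma^*{\rm Sp}(M)$ as mixed Hodge modules. This is the standard property that restriction to the zero section commutes with deformation to the normal cone.
\item The complexes $B^0$ and $C^0$ of $\cM$ coincide with those of ${\rm Sp}(\cM)$.
\end{itemize}
So it suffices to treat monodromic $M$. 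For such $M$, $\cM=\bigoplus_\alpha\cM^\alpha$ is graded by generalized eigenvalues of $s=-\sum\de_{t_i}t_i$, the $V$-filtration is $V^\alpha\cM=\bigoplus_{\beta\ge\alpha}\cM^\beta$, and $F$ is compatible with this grading.

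\emph{Step 3: induction in the monodromic case.} Write $\sigma=\sigma'\circ\sigma_r$, where $\sigma_r$ is the zero section along $t_r$ and $\sigma'$ is the zero section of $X\times\mathbb A^{r-1}$. For monodromic modules one can refine the grading to a $\ZZ^r$-type multigrading by the separate Euler operators $\de_{t_i}t_i$, after passing to a further specialization along each coordinate if necessary. With this multigrading, the Koszul complex $B^0$ in the variables $t_1,\dots,t_r$ is the total complex of the Koszul complex in $t_r$ (computing $\sigma_r^!$ by Step 1) followed by the Koszul complex in $t_1,\dots,t_{r-1}$, which handles $\sigma'^!$ by induction. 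The acyclicity in \propositionref{prop-KoszulAcyclic} and the filtered acyclicity in \propositionref{prop-KoszulFiltAcyclic} for $\chi\ne0$ show that only the $\chi=0$ pieces contribute. A spectral sequence argument then gives the filtered quasi-isomorphism; strictness passes through because each stage is strict and the spectral sequence degenerates by weight and strictness considerations. The argument for $C^0$ and $\sigma^*$ is dual, using $\de_t$ and the shifts $F_{p-r+k}$.

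\emph{Main obstacle.} The hard part is the filtered part of Step 3. One must check that the Hodge filtration on ${\rm Gr}_V^\chi$ along all $r$ variables agrees with the filtration obtained by iterated one-variable $V$-filtrations, i.e.\ a compatibility of $F$ with the multi-indexed $V$-filtration. I would first try to obtain this by realizing the multigraded monodromic module as an external-product-type object, using Saito's theory of monodromic mixed Hodge modules where $F$ is graded. I would then invoke strict compatibility of the filtered $t_i$-actions.
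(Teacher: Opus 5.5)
\emph{The paper gives no proof of this statement; it quotes \cite{CD} and \cite{CDS}.} So the question is whether your argument stands on its own. Steps 1 and 2 are essentially fine, and Step 2 is the natural reduction to the monodromic case via Verdier specialization. One slip in Step 1: $\psi_{t,1}$ is ${\rm Gr}_V^1\cM$, which is \emph{not} identified with ${\rm Gr}_V^0\cM$ via $\de_t$, since $\de_t={\rm can}$ need not be bijective.

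\textbf{The multigrading in Step 3 does not exist.} Step 3 is where all the content lies, and this is the genuine gap. Verdier specialization only produces modules monodromic for the \emph{total} Euler field $\theta=\sum_i t_i\de_{t_i}$, and these need not be monodromic for the individual $t_i\de_{t_i}$. For example, take $\cN=\iota_+\shO_\Delta$ with $\Delta=\{t_1=t_2\}\subset\mathbb A^2$; it is monodromic because $\Delta$ is a cone. In coordinates $u=t_1-t_2$, $v=t_2$ one has $t_1\de_{t_1}=(u+v)\de_u$ and $t_1\de_{t_1}\delta=-\delta+v\de_u\delta$. Iterating produces $v^k\de_u^k\delta$ for every $k$, so $t_1\de_{t_1}$ is not locally finite. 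Hence there is no multigrading, and $B^0(\cM)$ does not split into one-variable Koszul complexes.

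\textbf{Further specialization does not repair this.} Specializing along the coordinate hyperplanes preserves $\sigma^!$ and $\sigma^*$, but it changes the complexes. Let $M$ be monodromic and $M'={\rm Sp}_{t_r}(M)$. Then $V_{t_r}$ is compatible with the $\theta$-grading, and $B^0(\cM',F)$ is only the associated graded of $B^0(\cM,F)$ for the filtration induced by $V_{t_r}$ (shifted by one on the summands involving $t_r$). Getting back to $B^0(\cM,F)$ needs $E_1$-degeneration of this spectral sequence, compatibly with $F$. ``Weight and strictness considerations'' do not supply that. This is exactly the obstacle you flag: relating the $V$-filtration along $(t_1,\dots,t_{r-1})$ on ${\rm Gr}_{V_{t_r}}\cM$ to the one along $(t_1,\dots,t_r)$ on $\cM$, with Hodge filtrations. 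It is the substance of the theorem, not a compatibility check.

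\textbf{The filtered bookkeeping is also off.} \propositionref{prop-KoszulFiltAcyclic} gives filtered acyclicity of $B^\chi$ only for $\chi>0$, and of $C^\chi$ only for $\chi<0$, not for all $\chi\neq0$. For $\chi<0$, the map $t\colon{\rm Gr}_V^\chi\to{\rm Gr}_V^{\chi+1}$ is bijective but not strict; this already fails for $\sigma_+\cN$ with $\chi=-1$. So always splitting off $t_r$ discards pieces with $\chi_r<0$ that are acyclic but not filtered acyclic.

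\textbf{Where your induction does work.} For modules monodromic in each $t_i$ separately, with $F$ split along the multigrading, the argument can be repaired:
\begin{itemize}
\item $B^0=\bigoplus_{|\underline\chi|=0}K(\underline\chi)$.
\item For $\underline\chi\neq0$, pick $i$ with $\chi_i>0$; then $K(\underline\chi)$ is the cone of a termwise filtered isomorphism $t_i$, hence filtered acyclic.
\item $K(0)$ is built from iterated $\varphi_{t_i,1},\psi_{t_i,1}$ and ${\rm var}$'s, so it is a complex of mixed Hodge modules and therefore strict.
\end{itemize}
But ${\rm Sp}(M)$ does not land in this class.

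\textbf{What is missing.} You need an argument for diagonally monodromic modules that uses only the total Euler grading. For instance, for monodromic $M$ one has $\sigma^*M\cong\pi_*M$ for the projection $\pi\colon X\times\mathbb A^r\to X$. The relative de Rham complex splits by $\theta$-eigenvalue as $\bigoplus_\chi C^\chi(\cM)$, so only $C^0(\cM)$ survives. The remaining difficulty is controlling the Hodge filtration on $\pi_*$ of a monodromic module, with $\pi$ non-proper. This is where a global tool, such as the Fourier--Laplace transform of \cite{CD}, is needed.
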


\subsection{Specialization and Spectrum}
The results of this paper make use of the Verdier specialization functor, which we review here. For details, see \cites{SaitoMHM,BMS,CD}. 

Let $Z\subseteq X$ be a (possibly singular) subvariety defined by $f_1,\dots, f_r \in \shO_X(X)$. This defines a graph embedding $\Gamma \colon X \to T$ by $x \mapsto (x,f_1(x),\dots, f_r(x))$. Given $M$ a mixed Hodge module on $X$, we obtain $\Gamma_* M$ a mixed Hodge module on $T$.

The \emph{Verdier specialization} of $M$ along $Z$ (or, of $\Gamma_* M$ along $X\times \{0\}$) is a mixed Hodge module on $X\times \mathbb A^r_z$, where $z_1,\dots, z_r$ are the fiber coordinates. The module is denoted ${\rm Sp}_Z(M) = {\rm Sp}(\Gamma_*(M))$. Its underlying filtered $\Dmod$-module is 
\[ F_p {\rm Sp}(\Gamma_*(\cM)) = \bigoplus_{\chi \in \QQ} F_p {\rm Gr}_V^{\chi}(\Gamma_*(\cM)),\]
where $V^\bullet\Gamma_*(\cM)$ is the $V$-filtration along $t_1,\dots, t_r$ and $F_p {\rm Gr}_V^\chi(\Gamma_*(\cM)) = \frac{F_p V^\chi \Gamma_*(\cM)}{F_p V^{>\chi}\Gamma_* (\cM)}$. The $\Dmod$-module action is given on $\overline{m} \in {\rm Gr}_V^\chi(\Gamma_*(\cM))$ by
\[P \overline{m} = \overline{Pm}, \text{ for } P \in \Dmod_X,\]
\[ z_i \overline{m} = \overline{t_i m} \in {\rm Gr}_V^{\chi+1}(\Gamma_*(\cM)),\]
\[ \de_{z_i}\overline{m} = \overline{\de_{t_i} m} \in {\rm Gr}_V^{\chi-1}(\Gamma_*(\cM)).\]

This gives an example of a \emph{monodromic mixed Hodge module} on $X\times \mathbb A^r_z$. Recall that a mixed Hodge module is monodromic if its underlying $\Dmod$-module is, which means that it decomposes into generalized eigenspaces for the Euler operator $\theta_z = \sum_{i=1}^r z_i \de_{z_i}$. If $\cN$ is monodromic, for any $\chi \in \QQ$, we let
\[ \cN^\chi = \bigcup_{j\geq 1} \ker((\theta_z - \chi +r)^j), \text{ so that } \cN = \bigoplus_{\chi \in \QQ} \cN^\chi.\]

The $V$-filtration along $z_1,\dots, z_r$ is particularly easy to understand for monodromic modules: indeed, it is given by
\[ V^\lambda \cN = \bigoplus_{\chi\geq \lambda} \cN^\chi,\quad {\rm Gr}_V^\lambda(\cN) \cong \cN^\lambda.\]

We have particular interest in the case $M = \QQ^H_X[n]$. We let $\Gamma_*(\QQ^H_X[n]) = B_f$ in this case for ease of notation. For $Z \subseteq X$ a complete intersection (meaning $f_1, \dots ,f_r \in \shO_X(X)$ form a regular sequence), the module ${\rm Sp}(B_f)$ admits a morphism $L \to {\rm Sp}(B_f)$, where $L = i_* \QQ^H_{Z\times \mathbb A^r_z}[n]$ is the trivial Hodge module on $i \colon Z\times \mathbb A^r_z \hookrightarrow X\times \mathbb A^r_z$.

\begin{lemma} Let \[(\cK,F)= \ker(({\rm Gr}_V^r(B_f),F[r]) \xrightarrow[]{\de_{t_i}} \bigoplus_{i=1}^r ({\rm Gr}_V^{r-1}(B_f),F[r-1])).\] Then $(\cK,F)$ underlies $i_*\QQ_Z^H[n-r]$.

Moreover, the $\Dmod$-module $\cL = \cK\boxtimes \shO_{\mathbb A^r_z} = \cK[z_1,\dots, z_r]$ underlies $L$, with filtration given by
\[F_p \cL = (F_{p+r} \cK)[z_1,\dots, z_r].\]
\end{lemma}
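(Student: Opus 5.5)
The plan is to identify $\cK$ with the top cohomology of a restriction complex, and then use the vanishing of all the other cohomologies, which follows from the complete intersection hypothesis. By \propositionref{prop-Restriction}, applied to $\cM = B_f$ on $T = X\times\mathbb A^r_t$, the complex $C^0(B_f,F)$ computes $\sigma^*$. Its last two terms are
\[ F_{p-1}\textstyle\bigoplus_i {\rm Gr}_V^{1}(B_f) \xrightarrow{\de_t} F_p {\rm Gr}_V^0(B_f).\]
This is the wrong end of the complex. I would therefore use the dual description: the complex $C^{-r}$ (equivalently $C^\chi$ shifted to start at ${\rm Gr}_V^r$). It is cleaner to pass through specialization, as follows.

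Consider the monodromic module ${\rm Sp}(B_f)$ on $X\times\mathbb A^r_z$. The kernel of $(\de_{z_i})_i$ on its $\chi=r$ piece is exactly $\cK$. Write $\pi\colon X\times \mathbb A^r_z\to X$ and let $\sigma$ be the zero section. For a monodromic module, $\cH^{-r}$ of $\pi_*$ (equivalently $\sigma^!$ up to a Fourier-type shift) is computed by the Koszul complex of $\de_z$ on the eigenspace where $\de_z$ can fail to be an isomorphism. By \propositionref{prop-KoszulAcyclic}, the $\de_t$-Koszul complexes $C^\chi$ are acyclic for $\chi\neq 0$. The complex starting at ${\rm Gr}^r_V$ is exactly $C^0$, whose first map is $\de_t\colon {\rm Gr}_V^r\to\bigoplus {\rm Gr}_V^{r-1}$. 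Hence
\[ \cK = \cH^{-r} C^0(B_f) \cong \cH^{-r}\sigma^*\Gamma_*\QQ^H_X[n],\]
with the filtration $F_{p-r}$ on ${\rm Gr}_V^r$ matching the shift $F[r]$ in the statement. This is a filtered isomorphism by \propositionref{prop-Restriction}, using strictness of $C^0(\cM,F)$.

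Next, base change for the graph embedding gives $\sigma^*\Gamma_* \QQ^H_X[n] = i_*\QQ^H_Z[n]$. In the convention of \propositionref{prop-Restriction} this appears shifted as $i_*\QQ_Z^H[n-r][r]$. Since $Z$ is a complete intersection of codimension $r$, $\QQ_Z^H[n-r]$ is a single mixed Hodge module (lci implies the constant sheaf shifted by dimension is perverse). So $\sigma^*$ has cohomology only in degree $-r$, and that cohomology is $i_*\QQ^H_Z[n-r]$. This proves the first claim.

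The main obstacle is this perversity/concentration step together with the exact shift and sign conventions, that is, confirming that the Hodge filtration shift $F[r]$ is precisely what \propositionref{prop-Restriction} produces.

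For the second claim, $L = i_*\QQ^H_{Z\times\mathbb A^r}[n] = p^*(i_*\QQ^H_Z[n-r])[r]$ is the smooth pullback along the projection $p\colon X\times\mathbb A^r_z\to X$. For smooth pullback of relative dimension $r$, the underlying filtered module is $\cK\boxtimes\shO_{\mathbb A^r}$ with $F_p = \sum F_{p-k}\cK\boxtimes F_k\shO = F_p\cK[z]$, up to the Tate-twist normalization. The normalization making $\QQ^H[n]$ have its Hodge filtration start where $\shO$ does yields $F_{p+r}\cK[z]$. One then checks that this convention matches the one on $\cK$ fixed above, comparing with the case $Z$ smooth and $f$ linear coordinates, where all pieces are explicit.
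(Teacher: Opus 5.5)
Your argument is correct and is essentially the paper's proof: $\cK$ is $\cohH^{-r}$ of the strict complex $C^0(B_f,F)$, whose first term already carries the $F[r]$ shift. \propositionref{prop-Restriction} plus base change then identifies it with $\cohH^{-r}(i_*\QQ^H_Z[n])=i_*\QQ^H_Z[n-r]$, using that $\QQ^H_Z[n-r]$ is a single mixed Hodge module for lci $Z$. The formula for $F_\bullet\cL$ is the external-product convention, which the paper treats as immediate: in the paper's indexing $F_{-r}\shO_{\mathbb A^r_z}=\shO_{\mathbb A^r_z}$, which gives $F_{p+r}\cK[z]$ directly.

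The detour through $C^{-r}$, specialization and $\pi_*$ is unnecessary, and $C^{-r}$ is not the relevant complex. Since $C^0$ already begins with ${\rm Gr}_V^r$ in degree $-r$, you can read off $\cK$ straight from its definition.
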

\begin{proof} The first claim follows by applying \propositionref{prop-Restriction} to $B_f = \Gamma_*(\QQ_X^H[n])$, using Base Change \cite{SaitoMHM}*{(4.4.3)} to see that
\[ \sigma^* \Gamma_*(\QQ_X^H[n]) = i_* i^* \QQ_X^H[n] = i_* \QQ_Z^H[n].\]

The second claim follows by definition, using, for example, \cite{SaitoMHM}*{Lem. 2.25}.
\end{proof}

The map $L \to {\rm Sp}(B_f)$ is injective, and we let $Q$ be the cokernel of the map. Then we have a short exact sequence of monodromic mixed Hodge modules on $X\times \mathbb A^r_z$:
\[ 0 \to L \to {\rm Sp}(B_f) \to Q \to 0.\]

The monodromic pieces of $\cL$ satisfy $\cL^{\chi} = 0$ for $\chi \notin \ZZ_{\geq r}$. Thus, we see that ${\rm Sp}(B_f)^\chi = \cQ^\chi$ for all $\chi \notin \ZZ_{\geq r}$. The interesting part of this short exact sequence is then
\begin{equation} \label{eq-QSES} 0 \to L \to {\rm Sp}(B_f)^{\ZZ} \to Q^{\ZZ} \to 0\end{equation}
where for any monodromic mixed Hodge module, we use the superscript $\ZZ$ to denote the ``unipotent part'', which is the direct sum of the monodromic pieces with integer indices. We also use a superscript $\alpha + \ZZ$ to denote the filtered direct summand of a monodromic module which is obtained by collecting all summands with indices in $\alpha + \ZZ$.

For $Z\subseteq X$ a local complete intersection subvariety, we review the definition of the spectrum of $Z$ at $x\in Z_{\rm sing}$ due to Dimca, Maisonobe and Saito. For details, consult \cite{DMS} and \cite{DirksMicrolocal}.

Using the monodromy endomorphism and the mixed Hodge structure on the cohomology of the Milnor fiber, Steenbrink \cite{SteenbrinkSpectrum} defined, in the isolated hypersurface singularities case, the \emph{spectrum} of the hypersurface singularity. This is an invariant of the singularity given by a multiset of positive rational numbers, encoding the eigenspaces of the monodromy operator.

In \cite{DMS}, Dimca, Maisonobe and Saito defined the spectrum for any variety $Z$ at a point $x\in Z$ using the theory of mixed Hodge modules. The definition is rather technical, but when we assume $Z$ is a local complete intersection variety it is slightly simpler. 

Let $Z\subseteq X$ be a local complete intersection subvariety of pure codimension $r$. Let $x\in Z$, and locally around $x$ we can write $Z = V(f_1,\dots, f_r)$ where $f_1,\dots, f_r \in \shO_X(X)$ form a regular sequence. Let $\xi \in \{x\} \times \mathbb A^r_z$ be a sufficiently general element, and set $i_{\xi}\colon\left\{\xi\right\}\to X\times \mathbb{A}_z^r$ to be the inclusion. Then define the \emph{non-reduced Spectrum of $Z$ at $x$} by
\[ \widehat{{\rm Sp}}(Z,x) = \sum_{\alpha \in \QQ_{>0}} m_{\alpha,x} t^\alpha,\]
where
\[ m_{\alpha,x} = \sum_{k\in \ZZ} (-1)^k \dim_{\CC} {\rm Gr}^F_{\lceil \alpha\rceil - \dim Z -1} \cohH^{k-r} i_\xi^*( {\rm Sp}(B_f)^{\alpha+\ZZ}),\]
and we define the \emph{reduced spectrum} by
\[ {\rm Sp}(Z,x) = \widehat{\rm Sp}(Z,x) + (-t)^{\dim Z +1}.\]

This definition clearly extends to an arbitrary monodromic module $M$, where we write $m_{\alpha,x}(M)$ for the alternating sum
\[ m_{\alpha,x}(M) = \sum_{k\in \ZZ} (-1)^k \dim_{\CC} {\rm Gr}^F_{\lceil \alpha\rceil - \dim Z -1} \cohH^{k-r} i_\xi^*(\cM^{\alpha+\ZZ}).\] Moreover, if 
\[0 \to M_1 \to M_2 \to M_3 \to 0\] is a short exact sequence of monodromic mixed Hodge modules, then we see that
\[ \widehat{\rm Sp}(M_2,x) = \widehat{\rm Sp}(M_1,x) + \widehat{\rm Sp}(M_3,x).\]

For $M$ a monodromic mixed Hodge module, we let its \emph{integer spectrum} be denoted
\[ \widehat{\rm Sp}_{\ZZ}(M,x) = \sum_{j \in \ZZ_{>0}} m_{j,x}(M) t^j.\]

Applying the additivity to the short exact sequence \eqref{eq-QSES} we get
\[ \widehat{\rm Sp}(Q,x) = \widehat{\rm Sp}(Z,x) - \widehat{\rm Sp}(L,x),\]
and in particular,
\[ \widehat{\rm Sp}_{\ZZ}(Q,x) = \widehat{\rm Sp}_{\ZZ}(Z,x) - \widehat{\rm Sp}(L,x),\]
which by an easy computation of the non-reduced spectrum of $L$, gives
\[ \widehat{\rm Sp}(Q,x) = {\rm Sp}(Z,x).\]

We end this subsection by stating a criterion for vanishing of integer spectral numbers, which is a special case of \cite{DirksMicrolocal}*{Lem. 2.7}:
\begin{lemma} \label{lem-lowerBoundSpectralNumbers} Let $M$ be a monodromic mixed Hodge module. Assume $F_{p-1-\dim X} \cM^{\ZZ} = 0$. Then for all $j\in \ZZ_{<p}$, we have $m_{j,x}(M) = 0$.
\end{lemma}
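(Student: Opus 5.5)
The plan is to show that, under the hypothesis, the Hodge filtration on each cohomology module $\cohH^{k-r} i_\xi^*(\cM^{\ZZ})$ vanishes in every index $\leq j-\dim Z-1$ when $j<p$. Since $\lceil j\rceil = j$ for integer $j$, the graded pieces ${\rm Gr}^F_{j-\dim Z-1}$ entering $m_{j,x}(M)$ are then all zero, and hence $m_{j,x}(M)=0$. The whole argument is therefore a bookkeeping statement: vanishing of low Hodge pieces survives pullback to a point, with a controlled shift of index.

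\textbf{Step 1: reduce to the summand $M^{\ZZ}$.} Monodromic modules decompose as filtered direct sums, so $M^{\ZZ}$ is itself a monodromic mixed Hodge module. For integer $j$ the multiplicity $m_{j,x}(M)$ only involves $\cM^{j+\ZZ}=\cM^{\ZZ}$. We may therefore replace $M$ by $M^{\ZZ}$ and assume $F_{p-1-\dim X}\cM=0$.

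\textbf{Step 2: factor $i_\xi$ into hypersurface restrictions.} Write $i_\xi$ as a composition of closed embeddings of smooth hypersurfaces. Because $\xi$ is general in $\{x\}\times\mathbb A^r_z$ and $M$ is monodromic, the $r$ restrictions in the fiber directions are non-characteristic for $M$. This is the standard use of genericity in the definition of the spectrum in \cite{DMS}. The remaining restrictions are to $\{x\}$ in the $X$-directions, and for these we use coordinates in which each step is restriction to a zero section.

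For each codimension-one step $\sigma$ we apply \propositionref{prop-Restriction} with $r=1$. It computes $F_q\cohH^j\sigma^*\cM$ from the complex $F_qC^0(\cM)$, which is built from $F_{q-1}{\rm Gr}_V^{1}\cM\to F_q{\rm Gr}_V^0\cM$. All Hodge indices appearing there are at most $q$, and $F_q{\rm Gr}_V^\chi\cM$ is a subquotient of $F_q\cM$. Hence if $F_q\cM=0$ for all $q\leq q_0$, the same holds for every $\cohH^j\sigma^*\cM$, up to the fixed index shift coming from the normalization of the pulled-back mixed Hodge module. Iterating this over all hypersurfaces gives vanishing of $F_q\cohH^{\bullet}i_\xi^*\cM$ for $q$ at most $p-1-\dim X$ plus the accumulated shift.

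\textbf{Step 3: compare indices (the main obstacle).} Everything reduces to verifying that the accumulated shift turns the bound $p-1-\dim X$ into vanishing of ${\rm Gr}^F_{q}$ for all $q\leq p-1-\dim Z-1$. That is exactly what is needed for all $j<p$. This step requires care with several conventions:
\begin{itemize}
\item the left/right module convention implicit in writing $F_{p-1-\dim X}$;
\item the shift $[\,\cdot\,]$ in the normalization of $i_\xi^*$;
\item the index shift $F[r]$ used when identifying $\cK$ with $i_*\QQ^H_Z[n-r]$ in the preceding lemma.
\end{itemize}
I would check the bookkeeping on the model $M=L=i_*\QQ^H_{Z\times\mathbb A^r_z}[n]$. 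There the lowest Hodge piece and the nonzero spectral multiplicity are known explicitly, and this fixes the constant relating them. The general case then follows from the monotonicity of Step 2.

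If the direct pullback computation proves awkward, I would instead invoke the general version \cite{DirksMicrolocal}*{Lem.~2.7}, of which this statement is a special case, and only match its normalization to the one used here.
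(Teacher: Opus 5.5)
\textbf{Gap in Step 3.} The paper gives no argument of its own here. It records the lemma as a special case of \cite{DirksMicrolocal}*{Lem. 2.7}, which is exactly your fallback. Your direct route, however, has a genuine gap at Step 3, the only non-formal step, and the way you propose to close it would fail. There is no ``fixed index shift coming from the normalization.''

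In the paper's indexing (right-module indexing, $F_{-n}B_f=\shO_X\delta$), \propositionref{prop-Restriction} shows that a codimension-one restriction never lowers the lowest Hodge index. Indeed, $F_q\cohH^0$ is a quotient of $F_q{\rm Gr}_V^0$, so it gains nothing. Meanwhile $F_q\cohH^{-1}$ sits inside $F_{q-1}{\rm Gr}_V^1$, so it gains one. A non-characteristic restriction has only $\cohH^{-1}$, hence gains exactly one.

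The total gain therefore depends on $M$. For $L$ it is $n$: the lowest index $-n$ becomes $0$ at $\xi$. For $Q$ at an isolated hypersurface singularity it is only $r$. Calibrating on $L$ would thus ``prove'' that $m_{j,x}(M)=0$ for all $j<p(M^\ZZ,F)+2n-r+1$. This is false: for $f=x_1^2+\dots+x_{2m}^2$ one has $p(Q^\ZZ,F)=-m-1$, yet $m_{m,x}(Q)\neq 0$.

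Conversely, bare monotonicity (gain $\geq 0$ per step) only kills $F_q$ on $\cohH^\bullet i_\xi^*\cM^\ZZ$ for $q\le p-1-n$. You need $q\le p-2-\dim Z=p-2-n+r$, so this fails as soon as $r\ge 2$.

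\textbf{How to fix it.} The missing input is a guaranteed gain of at least $r-1$, and it comes from the fiber directions.
\begin{enumerate}
\item Restrict first to $\{x\}\times\mathbb A^r_z$. The gain is $\ge 0$ by iterating \propositionref{prop-Restriction} in local coordinates on $X$.
\item Then pass to a general $\xi$. This restriction is non-characteristic for the finitely many monodromic cohomology modules on $\mathbb A^r_z$. So it preserves left indexing and raises the right index by exactly $r$.
\end{enumerate}
Equivalently, perform your $r$ non-characteristic fiber restrictions first and count $+1$ for each.

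This gives $F_q\cohH^\bullet i_\xi^*\cM^\ZZ=0$ for $q<p(M^\ZZ,F)+r$. Hence $m_{j,x}(M)=0$ for all $j<p(M^\ZZ,F)+n+1$. Your hypothesis gives $p(M^\ZZ,F)\ge p-n$, so the lemma follows with one to spare. This sharper form is in fact what \propositionref{prop-SpectrumBound} uses.
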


\section{Main Results}\label{sect-LCICase}

In this section, we prove the main results stated in the introduction. We begin with the hypersurface case, where the unipotent vanishing cycles module and its Hodge filtration capture the whole picture. We then turn to local complete intersections and introduce several singularity invariants, showing how they relate to each other and to the $\HRH$ invariant. The results beyond the hypersurface case can be viewed as different ways of generalizing this picture.

\subsection{Hypersurfaces}\label{sectionhypersurfaces} Let $Z$ be a hypersurface of an $n$-dimensional smooth variety $X$ defined by $f$. Let \[B_f = \bigoplus_{j\geq 0} \OX \partial_t^j\delta\] as defined in \textsection\ref{subsect-VFiltSpecSpec}, and \[F_{k-n}B_f = \bigoplus_{j\leq k}\OX \partial_t^j\delta.\] 

Recall that by \cite{DOR1}*{Thm. D(1)}, $Z$ is $k$-Hodge rational homology if \[F_{k-n}W_{n+1}\cohH^1_Z(\OX) = F_{k-n}\cohH^1_Z(\OX).\]
The local cohomology is captured by the unipotent nearby and vanishing cycles and their corresponding filtrations. More precisely, we have a short exact sequence \[0 \to {\rm Gr}_V^0(B_f) \xrightarrow[]{t \cdot} {\rm Gr}_V^1(B_f) \to \cohH^1_Z(\OX)\to 0,\] which is bi-strict with respect to the Hodge filtration and the weight filtration, where the weight filtration of the first two terms is induced by their monodromy operators, and underlies the sequence of mixed Hodge modules (see, for example, \cite{WHI}*{Thm. A}) \[0 \to \varphi_{f,1}\QQ_X^H[n] \xrightarrow[]{Var} \psi_{f,1}\QQ_X^H[n] (-1) \to \cohH^1_Z(\QQ^H_X[n]) \to 0.\]

Recall the convention for the Hodge filtration on nearby cycles, when indexing the Hodge filtration as for right $\Dmod$-modules, is as follows:
\[ F_p \psi_{f,1}(\shO_X) = F_{p-1} {\rm Gr}_V^1(B_f),\]
and that the weight filtration is defined as the monodromy filtration for the nilpotent operator ($N $ or $t\de_t$) centered at $n-1$. In general, the monodromy filtration satisfies (see \cite{SZMonodromy}*{Rmk. 2.3})
\[ W_{n-1+i}\Gr_V^1(B_f) = \sum_{\ell \geq \max\{0,-i\}} (t\de_t)^\ell \ker((t\de_t)^{1+i+2\ell}).\]

However, in our situation, $(t\de_t)^\ell \colon (\Gr_V^1(B_f),F) \to (\Gr_V^1(B_f),F[-\ell])$ is strict for all $\ell \geq 1$. It is an easy exercise, then, to see
\[ F_p W_{n-1+i}\Gr_V^1(B_f) = \sum_{\ell \geq \max\{0,-i\}} (t\de_t)^\ell F_{p-\ell} \ker((t\de_t)^{1+i+2\ell}).\]

This discussion, by taking $i = 0$, gives the containments
\[ F_p \ker(t\de_t) \subseteq F_p W_{n-1} \Gr_V^1(B_f) \subseteq F_p \ker(t\de_t) + t(F_p \Gr_V^0(B_f)), \]
and hence, the formula
\[ F_p W_{n+1} \cohH^1_Z(\shO_X) = \frac{F_p W_{n-1}{\rm Gr}_V^1(B_f) + t(F_p\Gr_V^0(B_f))}{t(F_p \Gr_V^0(B_f))} = \frac{{F_p} \ker(t\partial_t) + t(F_p {\rm Gr}_V^0(B_f))}{t(F_p{\rm Gr}_V^0(B_f))}.\]

\begin{proof}[Proof of \theoremref{thmhypersurfaces}]
Assume first that $F_{k-n}{\rm Gr}_V^0(B_f) = 0$. This means that every element in $F_{k-1-n}\Gr_V^1(B_f) = F_{k-1-n}(\psi_{f,1}(\shO_X)(-1))$ lies in the subset $\ker{ \partial_t} = \ker{t\partial_t} \subseteq W_{n-1} {\rm Gr}_V^1(B_f) = W_{n+1}(\psi_{f,1}(\QQ_X^H[n])(-1))$, where the containment follows by definition of the monodromy filtration centered at $n-1$. This implies that 
\[F_{k-1-n}\cohH^1_Z(\OX) \subseteq W_{n+1}\cohH^1_Z(\OX),\] and therefore, $\HRH(Z)\geq k-1$. \\

Suppose now that $F_{k-1-n}W_{n+1}\cohH^1_Z(\OX) = F_{k-1-n}\cohH^1_Z(\OX)$. By induction on $k$, we have that $F_{k-1-n} {\rm Gr}_V^0(B_f) = 0$. Thus, we have an isomorphism
\[ F_{k-1-n} {\rm Gr}_V^1(B_f) \cong F_{k-1-n} \cohH^1_Z(\shO_X).\]

To prove the claim, it suffices by strict surjectivity of $\de_t \colon ({\rm Gr}_V^1(B_f),F) \to ({\rm Gr}_V^0(B_f),F[-1])$ to prove that $F_{k-1-n} {\rm Gr}_V^1(B_f) \subseteq \ker(\de_t) = \ker(t\de_t)$. As $F_{k-1-n} {\rm Gr}_V^0(B_f) = 0$, we get 
\[ F_{k-1-n} \cohH^1_Z(\shO_X) = F_{k-1-n}\ker(N),\]
which finishes the proof.
\end{proof}

Suppose now that $Z$ has an isolated singularity. Let $F$ be the Milnor fiber of $Z$. It is well-known that \[\DR({\rm Gr}_V^0(B_f)) \cong H^{n-1}(F)_1\] supported on the singular point. Moreover, the dimension of the Hodge filtration of this cohomology is controlled by the spectral numbers. More precisely, if $\alpha_{f,i}$ are the spectral numbers, then \[\# \{ i \mid  \alpha_{f,i} = k\} = \dim\Gr_F^p H^{n-1}(F)_1\] for $p=n-k$ (and similarly for non-integer ones, see e.g., \cite{saito93}*{\textsection 3}). By using induction and the definition of the Hodge filtration and the de-Rham functor, the following result follows from \theoremref{thmhypersurfaces}.

\begin{corollary}\label{corollaryspectrum}
    Let $Z$ be a hypersurface of a smooth variety that has an isolated singularity at $x\in Z$. Then, \[\HRH(Z) =  \Spe_{\min,\ZZ}(Z,x) - 2.\]
\end{corollary}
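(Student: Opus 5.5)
Since the hypersurface case of \theoremref{thmhypersurfaces} already identifies ${\rm HRH}_x(Z) = p(\varphi_{f,1}(\shO_X)_x,F) + n - 2$, it suffices to prove
\[ p(\varphi_{f,1}(\shO_X)_x,F) + n = \Spe_{\min,\ZZ}(Z,x), \]
which is a statement about the single module ${\rm Gr}_V^0(B_f)$ and its Hodge filtration. Because $x$ is an isolated singular point, after shrinking $X$ around $x$ this module is supported at $x$. Hence ${\rm Gr}_V^0(B_f) = \iota_{x,+}H$ for a mixed Hodge structure $H$, with $\iota_x$ the inclusion of the point. Via the de Rham functor, $H$ is identified with the unipotent part $H^{n-1}(F)_1$ of the reduced cohomology of the Milnor fiber, compatibly with Hodge filtrations up to a fixed shift.

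\textbf{Step 1: the first Hodge jump in terms of $H$.} For a filtered $\Dmod_X$-module $\iota_{x,+}(H,F)$ supported at a point of the $n$-dimensional $X$, one has
\[ F_p(\iota_{x,+}H) = \sum_{\beta} F_{p-|\beta|}H\,\de^\beta\delta_x \]
in the right-module convention, with the shift by $n$ being the usual one for direct images from a point. Therefore the first index $p$ with $F_p\neq 0$ on the $\Dmod$-module equals the first index with $F_{\bullet}H\neq 0$, up to the normalization shift by $n$. In terms of the graded pieces, $p(\varphi_{f,1}(\shO_X)_x,F)$ is determined by the largest $q$ with ${\rm Gr}_F^q H^{n-1}(F)_1 \neq 0$, after converting decreasing to increasing indexing via $F_p = F^{-p}$.

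\textbf{Step 2: translate to spectral numbers.} By the cited relation
\[ \#\{ i \mid \alpha_{f,i} = k\} = \dim {\rm Gr}_F^{\,n-k} H^{n-1}(F)_1 , \]
the largest such $q$ corresponds to the smallest integer spectral number $k = n - q$. Combining this with Step 1 gives $\Spe_{\min,\ZZ}(Z,x) = p(\varphi_{f,1}(\shO_X)_x,F) + n$. The case where one side is infinite is the same as the other side vanishing, namely $H = 0$. Alternatively, the inequality ``$\le$'' is the second inequality in \theoremref{thmhypersurfaces}, and the argument here gives the reverse.

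\textbf{Main obstacle.} The difficulty is bookkeeping of conventions. The Hodge filtration on $\varphi_{f,1}$ has a shift relative to ${\rm Gr}_V^0(B_f)$; left and right modules differ by $n$; $\iota_{x,+}$ introduces its own shift; and Steenbrink's normalization of the spectrum in terms of $n-1$ versus $n$ must be matched. I would fix these shifts by checking the ordinary double point $f = \sum_{i=1}^n x_i^2$ with $n$ even. There $H^{n-1}(F)_1$ is one-dimensional of type $(n/2,n/2)$ and the spectral number is $n/2$, so the identity pins down the constant. The induction mentioned before the corollary is only needed to pass from ``first nonzero graded piece'' to ``first nonzero filtered piece,'' which is immediate for a finite filtration.
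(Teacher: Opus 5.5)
Your proposal is correct and is essentially the paper's own argument. You combine the equality $\HRH_x(Z)=p(\varphi_{f,1}(\shO_X)_x,F)+n-2$ from \theoremref{thmhypersurfaces}, the fact that ${\rm Gr}_V^0(B_f)$ is supported at $x$ with de Rham realization $H^{n-1}(F)_1$, and the relation $\#\{i\mid \alpha_{f,i}=k\}=\dim\Gr_F^{n-k}H^{n-1}(F)_1$; this gives exactly the identity $\Spe_{\min,\ZZ}(Z,x)=p(\varphi_{f,1}(\shO_X)_x,F)+n$ recorded in \remarkref{rmk-HypersurfaceCase}. One small caution: calibrating the shift with $\sum_{i=1}^n x_i^2$ only works for even $n$, since for odd $n$ the monodromy is $-1$ and $H^{n-1}(F)_1=0$, so either track the normalizations directly or use a test case such as $x_1^3+x_2^3+x_3^3+\sum_{i\geq 4}x_i^2$ for odd $n$.
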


\begin{remark}
    The results in this section can be proved with the techniques of the upcoming sections. The hypersurface case is a good illustration of objects introduced in the case of local complete intersections. See \remarkref{rmk-HypersurfaceCase}.
\end{remark}

\subsection{Reinterpretation using specialization} We now focus on the local complete intersection case. Let $Z = V(f_1,\dots, f_r) \subseteq X$ be a complete intersection subvariety of pure codimension $r$, where $X$ is a smooth irreducible variety of dimension $n$. We introduce in this section several integer invariants associated to $Z$ and show how they relate to each other. In the hypersurface case, we will see that these numbers are all essentially the same, except for the invariant defined using the Bernstein--Sato polynomial.

We have the short exact sequence of monodromic mixed Hodge modules on $X\times \mathbb A^r_z$,
\[ 0 \to L \to {\rm Sp}(B_f)^\ZZ \to Q^\ZZ \to 0,\]
which in this case encodes the morphism $i_* \psi_Z$. To see this, let $\sigma \colon X \to X\times \mathbb A^r_z$ be the inclusion of the zero section. By \propositionref{prop-Restriction} above, we see that $\sigma^!(M) = \sigma^!(M^\ZZ)$ for any monodromic module $M$.

We have by \cite{SaitoMHM}*{Pg. 269} that
\[ \sigma^!({\rm Sp}(B_f)) = \sigma^!(B_f) \cong i_* i^!\QQ_X^H[n],\]
where we also use $\sigma$ to denote the zero section in $X\times \mathbb A^r_t$ for the middle term, and the last equality follows by Base Change \cite{SaitoMHM}*{(4.4.3)}.

Applying $\sigma^!$ to the short exact sequence, we get an exact triangle
\[ \sigma^! L \to i_* i^! \QQ_X^H[n] \to \sigma^! Q^{\ZZ} \xrightarrow[]{+1}.\]

Recalling that $L = i_* \QQ_{Z\times \mathbb A^r_z}[n]$, it is a simple computation to check that \[\sigma^! L = i_* \QQ_Z^H[n-2r](-r) = (i_*\QQ_Z^H[d])[-r](-r),\] and so the morphism $\chi \colon \sigma^! L \to i_* i^! \QQ_X^H[n]$ is (up to non-zero scalar multiplication on the irreducible components of $Z$) the map $\psi_Z[-r](-r)$. By looking at the long exact sequence in cohomology, the only non-zero terms are the rightmost four:
\[ 0 \to \cohH^{r-1}(\sigma^! Q) \to (i_* \QQ_Z^H[d])(-r) \xrightarrow[]{\chi} \cohH^r_Z(\QQ_X^H[n]) \to \cohH^r(\sigma^! Q^{\ZZ}) \to 0.\]

\begin{proposition} \label{prop-PRSQ} In the notation above, we have the following:
\begin{enumerate} \item $F_{k-d} \gamma_Z$ is an isomorphism if and only if $F_{k-n} \cohH^{r-1}(\sigma^! Q^{\ZZ}) = 0$.

\item $F_{k-d}\gamma_Z^\vee$ is an isomorphism if and only if $\HRH(Z) \geq k$ if and only if $F_{k-n} \cohH^{r}(\sigma^! Q^{\ZZ}) = 0$
\end{enumerate}

In particular, $F_{k-n}\cohH^r(\sigma^! Q^\ZZ) = 0$ implies $F_{k-n} \cohH^{r-1}(\sigma^! Q^{\ZZ}) = 0$.
\end{proposition}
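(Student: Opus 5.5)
The plan is to read everything off the four-term exact sequence
\[ 0 \to \cohH^{r-1}(\sigma^! Q^\ZZ) \to (i_*\QQ_Z^H[d])(-r) \xrightarrow[]{\chi} \cohH^r_Z(\QQ_X^H[n]) \to \cohH^r(\sigma^! Q^{\ZZ}) \to 0,\]
which is an exact sequence of mixed Hodge modules. Its morphisms are therefore bi-strict for the Hodge filtration, so applying $F_p$ preserves exactness. Here $\chi$ is, up to nonzero scalars on the irreducible components, the map $\psi_Z[-r](-r)$. I will use the notation of \cite{DOR1}: $\gamma_Z\colon \QQ_Z^H[d]\to \mathbb D(\QQ_Z^H[d])(-d)$ is the comparison map, and $\gamma_Z^\vee$ is its dual. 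I also use the identification $\mathbb D(\QQ^H_Z[d])(-d) \cong i^!\QQ_X^H[n](r)[\ldots]$ for an LCI, under which $\psi_Z$ corresponds to $\gamma_Z$. Precisely, $\cohH^r_Z(\QQ_X^H[n])$ is the dual complex of $Z$, shifted and Tate twisted, and $\chi$ is $\gamma_Z$ twisted by $(-r)$. The first step is to record this identification together with the index shift. The twist $(-r)$ moves $F_p$ to $F_{p-r}$ on $\QQ^H_Z[d]$, and the convention $F_{k-d}$ on $Z$ versus $F_{k-n}$ on $X$ (with $n=d+r$) makes the indices match.

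Given this, statement (1) follows by exactness at the left. For every $p$, $F_p\cohH^{r-1}(\sigma^!Q^\ZZ)=\ker(F_p\chi)$ by strictness. So $F_{k-n}\cohH^{r-1}(\sigma^!Q^\ZZ)=0$ exactly when $F_{k-n}\chi$ is injective, which after reindexing means $F_{k-d}\gamma_Z$ is injective.

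To upgrade injectivity to isomorphism I would use \cite{DOR1}. There, the relevant graded pieces of $\QQ_Z^H[d]$ and of its dual agree in the range in question (both sides are described by the de Rham/Du Bois complexes, with $\underline\Omega^p$ and its dual comparable, and Du Bois-type lengths equal). An injective map of coherent objects of the same size is then an isomorphism. The alternative, which I would try first, is the duality trick. The map $\gamma_Z$ factors through the weight-$d$ part, and its image is $W_d$ of the dual, which is the intersection complex. So $F\gamma_Z$ is an isomorphism if and only if it is injective and $F_{k-d}$ of the dual equals $F_{k-d}W_d$. This second condition is the $\HRH$-type condition, and it is governed by $\gamma^\vee_Z$ through the statement $\HRH\ge k$ of \cite{DOR1}*{Thm. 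D}. For (1), however, one only needs the kernel. I expect this step to be the main obstacle: making precise that "isomorphism on $F_{k-d}$" for $\gamma_Z$ reduces to injectivity alone, presumably because in \cite{DOR1} the definition of $F_{k-d}\gamma_Z$ being an isomorphism is phrased via the graded pieces $\Gr^F$, where a self-duality of dimensions applies.

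For (2), exactness at the right gives $F_p\cohH^r(\sigma^!Q^\ZZ)=\mathrm{coker}(F_p\chi)$. So $F_{k-n}\cohH^r(\sigma^!Q^\ZZ)=0$ if and only if $F_{k-n}\cohH^r_Z(\OX)$ lies in the image of $\chi$. That image is $W_{n+r}\cohH^r_Z$, the lowest weight part, since $L$ is pure and the image of a pure module in $\cohH^r_Z$ is its weight-$(n+r)$ part, which is the intersection complex. This is exactly \cite{DOR1}*{Thm. D(1)}'s criterion $F_{k-n}W_{n+r}\cohH^r_Z(\OX)=F_{k-n}\cohH^r_Z(\OX)$ for $\HRH(Z)\ge k$. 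Its equivalence with $F_{k-d}\gamma_Z^\vee$ being an isomorphism is part of that theorem; the surjectivity is automatic by the same strictness argument. Finally, the last sentence follows by combining (1) and (2) with \cite{DOR1}, which shows that $F_{k-d}\gamma_Z^\vee$ being an isomorphism implies $F_{k-d}\gamma_Z$ is one. Alternatively, it follows directly from duality: $\cohH^{r-1}(\sigma^!Q^\ZZ)$ is a submodule of a pure module of weight $n+r$, whereas $\cohH^r$ controls the lower weights of the cokernel.
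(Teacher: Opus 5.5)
Your architecture is the paper's: you read the kernel and cokernel of $F_p\chi$ off the four-term sequence by strictness, identify $\chi$ with $\psi_Z[-r](-r)$ up to scalars, reindex $F_{k-n}\leftrightarrow F_{k-d}$, and invoke \cite{DOR1}*{Thm. D(1)} for $\HRH$. There is, however, a genuine gap in part (1), and it comes from misreading the notation.

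In the paper, following \cite{DOR1}, the full comparison map $\QQ^H_Z[d]\to\mathbf D(\QQ^H_Z[d])(-d)$ is $\psi_Z$, not $\gamma_Z$. It factors through the intersection complex as $\psi_Z=\gamma_Z^\vee\circ\gamma_Z$. Here $\gamma_Z\colon \QQ^H_Z[d]\to IC^H_Z$ is the natural surjection, and $\gamma_Z^\vee\colon IC^H_Z\to\mathbf D(\QQ^H_Z[d])(-d)$ is its dual, an injection. Under your reading $\gamma_Z=\psi_Z$, statement (1) is false, so the step you flag as the main obstacle cannot be carried out. (The principle ``an injective map of coherent objects of the same size is an isomorphism'' is not valid anyway.) Concretely, take $k=0$. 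Then $F_{-d}\psi_Z$ is always injective, because $\gamma_Z$ identifies the lowest Hodge pieces of $\QQ^H_Z[d]$ and of $IC^H_Z$, both equal to $\pi_*\omega_{\widetilde Z}$ for a resolution $\pi$. But $F_{-d}\psi_Z$ is bijective only when $\HRH(Z)\geq 0$. This fails for Du Bois non-rational singularities such as the cone over a plane cubic.

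With the correct reading, the step is immediate. Since $\gamma_Z$ is a surjection of mixed Hodge modules, strictness makes $F_{k-d}\gamma_Z$ surjective, so it is an isomorphism iff it is injective. Since $F_{k-d}\gamma_Z^\vee$ is injective, $\ker F_{k-d}\gamma_Z=\ker F_{k-d}\psi_Z$, and after your reindexing this is $F_{k-n}\cohH^{r-1}(\sigma^!Q^\ZZ)$.

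Dually, what is automatic is the injectivity of $F_{k-d}\gamma_Z^\vee$, not surjectivity as you write. So $F_{k-d}\gamma_Z^\vee$ is an isomorphism iff $F_{k-d}\psi_Z$ is surjective, i.e.\ iff $F_{k-n}\cohH^{r}(\sigma^!Q^\ZZ)=0$. Your identification $\operatorname{im}\chi=W_{n+r}\cohH^r_Z(\QQ^H_X[n])$ then links this to \cite{DOR1}*{Thm. D(1)}. The correct reason for that identification is that the source of $\chi$ has weights $\leq n+r$ and the target has weights $\geq n+r$. It is not that $L$ is pure: $L$ is pure only when $Z$ is a rational homology manifold.

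For the final sentence you do need the input from \cite{DOR1} that $F_{k-d}\gamma_Z^\vee$ being an isomorphism forces $F_{k-d}\gamma_Z$ to be one. Your alternative duality argument fails, because $\ker\chi$ is exactly the weight $\leq n+r-1$ part of the non-pure module $(i_*\QQ^H_Z[d])(-r)$, not a submodule of a pure module of weight $n+r$.
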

\begin{proof} Recall that $F_{k-d} \gamma_Z$ is an isomorphism if and only if it is injective. This is true if and only if $F_{k-d}\psi_Z$ is injective. As $\psi_Z(-r)$ and $\chi$ agree up to non-zero scalar multiples, this is equivalent to $F_{k-n}\chi$ being injective, which by the exact sequence is equivalent to the vanishing $F_{k-n}\cohH^{r-1}(\sigma^! \cQ^{\ZZ})$.

The other claim is shown similarly.    
\end{proof}

We can rephrase the results of the proposition purely in terms of the $V$-filtration.

\begin{proposition} \label{prop-PRSV} In the notation above, we have the following:
\begin{enumerate} \item The map $F_{k-d} \gamma_Z$ is an isomorphism if and only if \[\left(\sum_{i=1}^r t_i F_{k-n} {\rm Gr}_V^{r-1}(B_f)\right) \cap \left(\bigcap_{i=1}^r \ker(\de_{t_i} \colon F_{k-n} {\rm Gr}_V^r(B_f) \to F_{k-n+1} {\rm Gr}_V^{r-1}(B_f))\right) = 0.\]

\item The map $F_{k-d}\gamma_Z^\vee$ is an isomorphism if and only if $\HRH(Z) \geq k$ if and only if \[\left(\sum_{i=1}^r t_i F_{k-n} {\rm Gr}_V^{r-1}(B_f)\right) + \left(\bigcap_{i=1}^r \ker(\de_{t_i} \colon F_{k-n} {\rm Gr}_V^r(B_f) \to F_{k-n+1} {\rm Gr}_V^{r-1}(B_f))\right) = F_{k-n}{\rm Gr}_V^r(B_f).\]
\end{enumerate}

In particular $\HRH(Z) \geq k$ if and only if
\[ F_{k-n}{\rm Gr}_V^r(B_f) = \left(\sum_{i=1}^r t_i F_{k-n} {\rm Gr}_V^{r-1}(B_f)\right) \oplus \left(\bigcap_{i=1}^r \ker(\de_{t_i} \colon F_{k-n} {\rm Gr}_V^r(B_f) \to F_{k-n+1} {\rm Gr}_V^{r-1}(B_f))\right)\]
\end{proposition}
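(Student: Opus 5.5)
Via \propositionref{prop-PRSQ}, it suffices to compute $F_{k-n}\cohH^{r-1}(\sigma^!Q^\ZZ)$ and $F_{k-n}\cohH^r(\sigma^!Q^\ZZ)$ in terms of the $V$-filtration of $B_f$, and then read off the stated conditions. The tool is \propositionref{prop-Restriction} applied to the monodromic modules $L$, ${\rm Sp}(B_f)^\ZZ$ and $Q^\ZZ$ on $X\times\mathbb A^r_z$. For a monodromic module $\cN$ we have ${\rm Gr}_V^\chi\cN\cong\cN^\chi$, so $\sigma^!$ is computed by the Koszul complex $B^0$:
\[F_p\cN^0\xrightarrow{z}\bigoplus F_p\cN^1\to\cdots\to F_p\cN^r.\]
For $\cN={\rm Sp}(B_f)$ this is the complex $F_pB^0(B_f)$, and we recover the isomorphism $\sigma^!{\rm Sp}(B_f)\cong\sigma^!B_f$.

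Since $\cL^\chi=0$ for $\chi<r$, the complex for $L$ has a single term, $F_p\cL^r=F_{p+r}\cK$, placed in degree $r$. The complex for $Q^\ZZ$ is therefore the complex for ${\rm Sp}(B_f)$ with its last term replaced by $F_p{\rm Gr}_V^r(B_f)/F_p\cK'$, where $\cK'\subseteq{\rm Gr}_V^r(B_f)$ denotes the image of $\cL^r$. By the lemma defining $\cK$ (with its filtration shifted by $F[r]$), this image is
\[\cK'=\bigcap_i\ker(\de_{t_i}:{\rm Gr}_V^r\to{\rm Gr}_V^{r-1}),\qquad F_p\cK'=\bigcap_i\ker(\de_{t_i}\text{ on }F_p{\rm Gr}_V^r).\]
The filtration on $Q$ is the quotient filtration, which is legitimate by strictness of morphisms of mixed Hodge modules. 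The strictness of $B^0$ from \propositionref{prop-Restriction} also applies to $Q^\ZZ$, so the cohomology of the filtered pieces computes $F_{k-n}\cohH^j$.

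From this we read off the top cohomology:
\[F_{k-n}\cohH^r(\sigma^!Q^\ZZ)=F_{k-n}{\rm Gr}_V^r\Big/\Big(\sum_i t_iF_{k-n}{\rm Gr}_V^{r-1}+F_{k-n}\cK'\Big).\]
It vanishes exactly when the sum condition in (2) holds, and then (2) follows from \propositionref{prop-PRSQ}(2).

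For (1), compare with $\sigma^!{\rm Sp}(B_f)$, whose $\cH^{r-1}$ is zero; this is the cohomology sequence displayed before \propositionref{prop-PRSQ}. There are two routes. The first is the connecting map: the long exact sequence identifies $\cH^{r-1}(\sigma^!Q)$ with $\ker\chi$. Concretely, $\chi$ is the map $F_{k-n}\cK'\to F_{k-n}{\rm Gr}_V^r/\sum t_iF_{k-n}{\rm Gr}_V^{r-1}$, and its kernel is $F_{k-n}\cK'\cap\sum t_iF_{k-n}{\rm Gr}_V^{r-1}$. The second route is direct: a class in $\cH^{r-1}$ of the $Q$-complex is a cocycle $c$ with $z c\in F\cK'$. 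Modulo coboundaries, and using the vanishing of $\cH^{r-1}$ for ${\rm Sp}(B_f)$ together with strictness, these classes correspond to elements of $F\cK'\cap\operatorname{im}(t)$. Either way, injectivity of $F_{k-n}\chi$ is equivalent to the intersection condition in (1).

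The final direct-sum statement is (2) combined with the intersection being zero, and that intersection vanishes by the "in particular" of \propositionref{prop-PRSQ}.

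The main obstacle is filtration bookkeeping. Two points need care. First, the filtered image of $\sum_i t_iF_{k-n}{\rm Gr}_V^{r-1}$ must really equal $F_{k-n}$ of the image; this is where strictness of $B^0(B_f)$ and of the maps in the short exact sequence enters. Second, $F_{p+r}\cK$ must match $F_p$ of the kernel inside ${\rm Gr}_V^r$, taken with the right shifts, so that $F_{k-n}\cK'$ is exactly the displayed intersection of kernels. I would check the shift conventions against the hypersurface case ($r=1$) treated in \S\ref{sectionhypersurfaces}.
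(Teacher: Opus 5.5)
Your proposal is correct and is essentially the paper's argument. The paper simply declares both items to be immediate restatements of \propositionref{prop-PRSQ} using the definition of the filtered $\Dmod$-module underlying $Q$. You make that restatement explicit by computing $F_{k-n}\cohH^{r-1}$ and $F_{k-n}\cohH^{r}$ of $\sigma^!Q^\ZZ$ through the strict Koszul complex $B^0$ of \propositionref{prop-Restriction}, and your shift bookkeeping $F_p\cL^r = F_{p+r}\cK = \bigcap_i \ker(\de_{t_i})\cap F_p{\rm Gr}_V^r(B_f)$ is right. For (1), your second route is cleanest phrased as the long exact sequence of the levelwise short exact sequences of filtered Koszul complexes attached to $0\to L\to {\rm Sp}(B_f)^\ZZ\to Q^\ZZ\to 0$, which avoids needing naturality of the isomorphisms in \propositionref{prop-Restriction}.
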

\begin{proof} These are immediate restatements of the conditions in the previous proposition, using the definition of the underlying filtered $\Dmod$-module of $Q$.
\end{proof}

Thus, we get a $V$-filtration characterization of rational smoothness in the local complete intersection case.

\begin{corollary} \label{cor-RSVFilt} In the above notation, $Z$ is a rational homology manifold if and only if 
    \[ {\rm Gr}_V^r(B_f) = \left(\sum_{i=1}^r t_i {\rm Gr}_V^{r-1}(B_f)\right) \oplus \left(\bigcap_{i=1}^r \ker(\de_{t_i} \colon {\rm Gr}_V^r(B_f) \to  {\rm Gr}_V^{r-1}(B_f))\right).\]
\end{corollary}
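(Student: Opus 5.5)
The plan is to take $k \to \infty$ in \propositionref{prop-PRSV}, or equivalently in \propositionref{prop-PRSQ}. We use that $Z$ is a rational homology manifold if and only if $\HRH(Z) \geq k$ for all $k$, i.e. $\HRH(Z) = +\infty$; this characterization is part of the definition and basic theory of $\HRH$ from \cite{DOR1}. Concretely, $Z$ is a rational homology manifold if and only if $\psi_Z \colon \QQ_Z^H[d] \to \mathbb D(\QQ_Z^H[d])(-d)$ is an isomorphism. Since the Hodge filtrations are exhaustive, this holds if and only if both $F_{k-d}\gamma_Z$ and $F_{k-d}\gamma_Z^\vee$ are isomorphisms for every $k$. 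By the final sentence of \propositionref{prop-PRSQ}, the condition on $\gamma_Z^\vee$ already forces the condition on $\gamma_Z$. So rational smoothness is equivalent to: for all $k$,
\[ F_{k-n}{\rm Gr}_V^r(B_f) = \Big(\sum_i t_i F_{k-n}{\rm Gr}_V^{r-1}(B_f)\Big) \oplus \Big(\bigcap_i \ker \de_{t_i}\vert_{F_{k-n}{\rm Gr}_V^r(B_f)}\Big). \]

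Next we pass to the union over $k$. Since $F$ is exhaustive and increasing on ${\rm Gr}_V^r(B_f)$ and ${\rm Gr}_V^{r-1}(B_f)$, the union over $k$ of the left side is ${\rm Gr}_V^r(B_f)$, and the union of the first summand on the right is $\sum_i t_i {\rm Gr}_V^{r-1}(B_f)$. For the kernels, an element of $\bigcap_i \ker \de_{t_i}$ lies in some $F_{k-n}$, so the union of the filtered kernels is the full kernel. Directness of the sum passes to the union because any relation lives at a finite level $k$. This proves the forward implication.

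For the converse, assume the unfiltered direct sum decomposition holds. This is equivalent to the unfiltered exact sequence giving $\cohH^{r-1}(\sigma^!Q^\ZZ) = \cohH^r(\sigma^!Q^\ZZ) = 0$, i.e. $\chi$ is an isomorphism of underlying $\Dmod$-modules. A morphism of mixed Hodge modules is strict with respect to $F$, so an isomorphism of mixed Hodge modules is a filtered isomorphism. Hence every $F_{k-n}\cohH^{j}(\sigma^!Q^\ZZ)$ vanishes, and \propositionref{prop-PRSQ} gives $\HRH(Z) \geq k$ for all $k$.

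The only point needing care is this converse direction. The filtered condition must be recovered from the unfiltered one, and we do this through strictness of morphisms of mixed Hodge modules rather than by manipulating the $V$-filtration pieces directly.
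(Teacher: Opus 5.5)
Your proposal is correct and is essentially the paper's argument: the corollary is recorded without separate proof as the $k \to \infty$ (equivalently, unfiltered) form of \propositionref{prop-PRSV}, with rational smoothness corresponding to $\HRH(Z) = +\infty$, i.e.\ to $\chi$ being an isomorphism. In your converse, the appeal to strictness is harmless but unnecessary: once the underlying $\Dmod$-modules of $\cohH^{r-1}(\sigma^! Q^\ZZ)$ and $\cohH^{r}(\sigma^! Q^\ZZ)$ vanish, all their Hodge pieces vanish trivially.
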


In fact, we can say the following:
\begin{theorem} Let $Z \subseteq X$ be a complete intersection defined by $f_1,\dots, f_r \in \shO_X(X)$. The following are equivalent:
\begin{enumerate}
\item ${\rm Sp}(B_f)^{\ZZ}$ is a pure Hodge module of weight $n$.

\item $N = 0$ on ${\rm Sp}(B_f)^{\ZZ}$.

\item $Q^{\ZZ}$ is a pure Hodge module of weight $n$.
\end{enumerate}

Moreover, any of those equivalent conditions implies the following (all of which are equivalent to each other):
\begin{enumerate} \item $Z$ is a rational homology manifold.

\item $Z \times \mathbb A^r$ is a rational homology manifold.

\item $L = \QQ^H_{Z\times \mathbb A^r}[n]$ is a pure Hodge module of weight $n$.
\end{enumerate}

If $r =1$, the converse holds.

\end{theorem}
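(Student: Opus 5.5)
We first treat the equivalence of (1), (2), (3) in the first list. The weight filtration on ${\rm Sp}(B_f)$ is the monodromy filtration of the nilpotent operator $N$, which acts on the piece ${\rm Sp}(B_f)^\chi$ as $\theta_z - \chi + r$, i.e.\ as $s+\chi$ on ${\rm Gr}_V^\chi(B_f)$. This filtration is centered at weight $n$, because $B_f$ is pure of weight $n$ and specialization is a nearby-cycle type construction. Consequently ${\rm Sp}(B_f)^\ZZ$ is pure of weight $n$ exactly when the monodromy filtration is trivial, that is, when $N=0$. This proves (1)$\iff$(2).

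For (1)$\Rightarrow$(3), note that $Q^\ZZ$ is a quotient of a pure weight $n$ module, so $W_{n-1}Q^\ZZ=0$. By duality, or because ${\rm Sp}$ commutes with $\mathbb D$ up to Tate twist, the same argument applied to submodules shows the top graded pieces vanish too. Alternatively, one can argue directly that $N$ descends to $Q^\ZZ$ and the induced weight filtration on the quotient is again the monodromy filtration.

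For (3)$\Rightarrow$(2), use that $L$ is the trivial Hodge module on $Z\times\mathbb A^r_z$, so $L$ has weights $\le n$ and $N$ acts by zero on $L$ (it is the $\chi=r$ piece, annihilated by $s+r$). Since $Q^\ZZ$ is pure, $N$ induces a morphism $Q^\ZZ\to Q^\ZZ(-1)$ between pure modules of different weights, which must vanish. So $N({\rm Sp}(B_f)^\ZZ)\subseteq L$, and $N$ factors as a morphism ${\rm Sp}(B_f)^\ZZ\to Q^\ZZ\to L(-1)$. Here $Q^\ZZ$ is pure of weight $n$, while $L(-1)$ has weights $\le n-2$, so this map vanishes and $N=0$. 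This weight bookkeeping is the delicate step: one must check that $L$ really has weights $\le n$, which holds since it is $i_*\QQ^H[n]$ of a (singular) variety.

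Next, for the implication to the second list, we argue as follows.
\begin{itemize}
\item (a)$\iff$(b) is formal, since $Z\times\mathbb A^r$ is a rational homology manifold if and only if $Z$ is.
\item (b)$\iff$(c) is the standard characterization: a variety $Y$ of pure dimension $m$ is a rational homology manifold if and only if $\QQ^H_Y[m]$ is pure of weight $m$. This is the characterization used in \cite{DOR1}, since $\QQ_Y[m]\to IC_Y$ is then an isomorphism.
\item The first list implies (c) because $L\subseteq {\rm Sp}(B_f)^\ZZ$, and a submodule of a pure module of weight $n$ is pure of weight $n$; combined with the bound of weights $\le n$ already noted, this gives purity of $L$.
\end{itemize}

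For the converse when $r=1$, $Q^\ZZ=\bigoplus_j {\rm Gr}_V^j$ modulo $L$. By the acyclicity isomorphisms of \propositionref{prop-KoszulAcyclic}, every piece is isomorphic, via $t$ and $\de_t$, to ${\rm Gr}_V^0(B_f)=\varphi_{f,1}$ or to the ${\rm Gr}_V^1$ piece modulo $L$. If $Z$ is a rational homology manifold, then $\varphi_{f,1}(\shO_X)=0$ by \cite{VanCycleRHM}, as recalled in the introduction. Hence $\de_t\colon{\rm Gr}_V^1\to{\rm Gr}_V^0$ is zero, so ${\rm Gr}_V^1=\ker\de_t$ is exactly the $L$-part in degree $1$. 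It follows that $Q^\ZZ=0$, which is trivially pure, and hence ${\rm Sp}(B_f)^\ZZ=L$ is pure. The main obstacle overall is the weight argument in (3)$\Rightarrow$(2).
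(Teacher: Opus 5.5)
Your arguments for (1)$\iff$(2), for the equivalences in the second list, for (1)$\Rightarrow$(c), and for the converse when $r=1$ are fine and essentially match the paper. The gap is in (3)$\Rightarrow$(2), at the step you yourself flagged as delicate.

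With $N\colon {\rm Sp}(B_f)^\ZZ\to{\rm Sp}(B_f)^\ZZ(-1)$, the Tate twist $(-1)$ \emph{raises} weights by $2$. So $L(-1)$ has weights $\le n+2$, not $\le n-2$. The induced map $Q^\ZZ\to L(-1)$ therefore goes from a pure weight $n$ module into a module whose weight $n$ part is $(\Gr^W_{n-2}L)(-1)$. This is just the statement that $N$ lowers weights by $2$. When $Z$ is not a rational homology manifold, $L=i_*\QQ^H_{Z\times\mathbb A^r}[n]$ may have a nonzero $\Gr^W_{n-2}$. You cannot exclude that case here, because the rational homology manifold property is one of the consequences being derived. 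So weights alone do not kill this map; your factorization only shows $N^2=0$.

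The missing ingredient is the symmetry of the monodromy filtration, which you used for (1)$\iff$(2) but not here. Since $W_nL=L$ and $\Gr^W$ is exact on mixed Hodge modules, you get $\Gr^W_{n+i}{\rm Sp}(B_f)^\ZZ\cong\Gr^W_{n+i}Q^\ZZ=0$ for $i>0$. The weight filtration on ${\rm Sp}(B_f)^\ZZ$ is the monodromy filtration of $N$ centered at $n$. Hence the isomorphisms $N^i\colon\Gr^W_{n+i}\xrightarrow{\sim}\Gr^W_{n-i}(-i)$ force the lower graded pieces to vanish too. This gives (1), and hence (2); it is exactly the paper's argument.

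Three smaller points:
\begin{itemize}
\item In (1)$\Rightarrow$(3), strictness of morphisms with respect to $W$ already gives both $W_{n-1}Q^\ZZ=0$ and $W_nQ^\ZZ=Q^\ZZ$, so no duality is needed.
\item Your proposed alternative, that the weight filtration on $Q^\ZZ$ is the monodromy filtration of the induced $N$, does not hold in general for quotients.
\item $\cL$ is not just the $\chi=r$ piece; its monodromic pieces occur for all $\chi\in\ZZ_{\ge r}$. $N$ does nonetheless vanish on all of it.
\end{itemize}
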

\begin{proof} The equivalence of the second collection of three conditions is obvious.

The equivalence of the first and second conditions follows from the fact that the weight filtration on ${\rm Sp}(B_f)^\ZZ$ is the monodromy filtration for $N$ centered at $n$. Either condition implies the third, because $Q^{\ZZ}$ is a quotient of ${\rm Sp}(B_f)^{\ZZ}$ by definition.

To see that the third implies the first, recall that $W_n L = L$. Thus, for all $i\in \ZZ$, we have a short exact sequence,
\[ 0 \to {\rm Gr}^W_{n+i} L \to {\rm Gr}^W_{n+i} {\rm Sp}(B_f)^{\ZZ} \to {\rm Gr}^W_{n+i} Q^\ZZ \to 0,\]
and for $i > 0$, we have an isomorphism ${\rm Gr}^W_{n+i} {\rm Sp}(B_f)^\ZZ \cong {\rm Gr}^W_{n+i} Q^\ZZ$. Thus, $W_n Q^{\ZZ} = Q^{\ZZ}$ implies ${\rm Sp}(B_f)$ is pure.

We see that the condition that ${\rm Sp}(B_f)^\ZZ$ is pure of weight $n$ implies $L$ is pure of weight $n$. 

To prove the converse, assume $r =1$. Then $Z = \{f=0\}$ is a rational homology manifold if and only if $\varphi_{f,1}(\shO_X) = 0$ if and only if $N =0$ on $\psi_{f,1}(\shO_X)$. But then
\[ {\rm Sp}(B_f)^\ZZ = \bigoplus_{\ell \geq 1} {\rm Gr}_V^\ell (B_f)\]
clearly satisfies the vanishing $N =0$.
\end{proof}

\begin{remark} The difficulty in proving the converse of the above theorem in $r>1$ is the fact that we do not know if the inequality
\[ \min\{i \mid (s+r)^i {\rm Gr}_V^r(B_f) = 0\} \geq \min\{\ell \mid (s+r-1)^\ell {\rm Gr}_V^{r-1}(B_f) =0\}\]
is strict. This is true when $r=1$.

The inequality cannot be strict in general: indeed, if $Z$ is a rational homology manifold, then by \corollaryref{cor-RSVFilt}, we have
\[ {\rm Gr}_V^r(B_f) = \ker(s+r) + \sum_{i=1}^r t_i {\rm Gr}_V^{r-1}(B_f),\]
and so there are two possibilities: ${\rm Gr}_V^{r-1}(B_f) =0$, in which case $Q^\ZZ = 0$, or the inequality is an equality.
\end{remark}

\subsection{Integer invariants and relations among them}\label{sec-intinv}
We can now introduce the second integer invariant of this work, which is $p(Q^\ZZ,F)$. Immediately we obtain the following:
\begin{proposition} \label{prop-InequalityPRS} We have an inequality
\[ p(Q^\ZZ,F) +n -1 \leq \HRH(Z).\]
\end{proposition}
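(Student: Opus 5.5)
Set $k = p(Q^\ZZ,F) + n - 1$, so that $F_{k-n}\cQ^\ZZ = F_{p(Q^\ZZ,F)-1}\cQ^\ZZ = 0$ by the definition of $p(Q^\ZZ,F)$ as the index of the first nonzero Hodge piece. If $Q^\ZZ=0$ there is nothing to bound; by Proposition~\ref{prop-PRSQ}(2), $\HRH(Z)\geq k$ holds for every $k$ in that case. In general it suffices, again by Proposition~\ref{prop-PRSQ}(2), to prove the vanishing $F_{k-n}\cohH^r(\sigma^! Q^\ZZ) = 0$. The plan is therefore to show that the vanishing of the Hodge filtration on $Q^\ZZ$ below level $p(Q^\ZZ,F)$ propagates to the same vanishing on the top cohomology of $\sigma^!Q^\ZZ$.

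To do this, I would compute $\sigma^!Q^\ZZ$ using Proposition~\ref{prop-Restriction}, applied to the monodromic module $Q^\ZZ$ on $X\times\mathbb A^r_z$ along $z_1,\dots,z_r$. For a monodromic module the $V$-filtration is given by the monodromic decomposition, so ${\rm Gr}_V^\chi(\cQ)\cong \cQ^\chi$ as filtered modules, and the complex $B^0(\cQ,F)$ becomes
\[ F_p\cQ^0 \xrightarrow{z} \bigoplus_{i=1}^r F_p\cQ^1 \to \dots \to F_p\cQ^r. \]
By Proposition~\ref{prop-Restriction} this complex is strict and computes $F_p\cohH^j\sigma^!Q^\ZZ$. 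In particular,
\[ F_p\cohH^r(\sigma^! Q^\ZZ) = F_p\cQ^r\Big/\sum_i z_i F_p\cQ^{r-1}, \]
which is a quotient of $F_p\cQ^r \subseteq F_p\cQ^\ZZ$. Taking $p = k-n = p(Q^\ZZ,F)-1$ gives $F_p\cQ^r = 0$, so the top cohomology vanishes in that Hodge degree.

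The main point to check, and the expected obstacle, is the indexing convention. One must verify that the filtered isomorphism in Proposition~\ref{prop-Restriction} involves no shift of $F$ between $\cohH^r(\sigma^!Q^\ZZ)$ and the last term $F_p{\rm Gr}_V^r$ of $B^0$. One must also verify that the index $F_{k-n}$ in Proposition~\ref{prop-PRSQ} is the same one, since that proposition was derived from the exact sequence $\chi\colon\sigma^!L\to i_*i^!\QQ^H_X[n]$ with these conventions. If a Tate twist or a shift by $r$ appeared, I would recompute it on $\sigma^!L = i_*\QQ_Z^H[n-2r](-r)$, where both sides are explicit, to calibrate the shift. The strictness provided by Proposition~\ref{prop-Restriction} is exactly what allows the Hodge piece of the cohomology to be computed as a subquotient of the Hodge pieces of the terms, so no further argument is needed there.
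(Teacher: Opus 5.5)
Your proposal is correct and follows the paper's proof: $F_{p(Q^\ZZ,F)-1}\cQ^\ZZ=0$ forces $F_{p(Q^\ZZ,F)-1}\cohH^r\sigma^!(Q^\ZZ)=0$, because that Hodge piece is a quotient of $F_{p(Q^\ZZ,F)-1}\cQ^r$ via the complex $B^0$ of \propositionref{prop-Restriction}. As stated there, this filtered isomorphism carries no shift, so your indexing worry does not arise, and \propositionref{prop-PRSQ}(2) with $k=p(Q^\ZZ,F)+n-1$ then gives the bound.
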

\begin{proof} By definition, $F_{p(Q^\ZZ,F) -1} Q^\ZZ = 0$, so
$F_{p(Q^\ZZ,F)-1} \cohH^r \sigma^!(\cQ) = 0$
by \propositionref{prop-Restriction}.
\end{proof}

Before introducing the next integer invariant, we study how the invariant $p(Q^\ZZ,F)$ relates to the invariants $p(\cQ^k,F) = \inf\{p \mid F_p \cQ^k \neq 0\}$.

\begin{lemma} \label{lem-structurePQ} The following hold:
\begin{enumerate}
    \item $p(Q^\ZZ,F) = p(\cQ^r,F) = p(\cQ^\ell,F) \text{ for all } \ell \geq r.$
    \item $p(\cQ^{1-\ell},F) = p(\cQ^1,F) + \ell$ for all $\ell \geq 0$.
    \item For all $k\in [1,r) \cap \ZZ$, we have
\[ p(\cQ^{k+1},F) \leq p(\cQ^k,F) \leq p(\cQ^{k+1},F)+ 1.\]
\end{enumerate}
\end{lemma}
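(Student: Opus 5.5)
The plan is to apply the filtered acyclicity of the Koszul-type complexes (\propositionref{prop-KoszulFiltAcyclic}) to the monodromic mixed Hodge module $Q$ on $X\times \mathbb A^r_z$, taking the $V$-filtration along $z_1,\dots,z_r$. Since $Q$ is monodromic, ${\rm Gr}_V^\chi(\cQ)=\cQ^\chi$, and $z_i$, $\de_{z_i}$ act by shifting the index by $\pm 1$. Every inequality between the $p(\cQ^k,F)$ will come from reading off either the injectivity of the first differential or the surjectivity of the last differential in one of the complexes $B^\chi(\cQ,F)$ or $C^\chi(\cQ,F)$.

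\emph{Step 1 (the range $\chi>0$).} By \propositionref{prop-KoszulFiltAcyclic}, $B^\chi(\cQ,F)$ is filtered acyclic for $\chi>0$.
\begin{itemize}
\item The first map $F_p\cQ^\chi\xrightarrow{z}\bigoplus_i F_p\cQ^{\chi+1}$ is injective, so $p(\cQ^{\chi+1},F)\le p(\cQ^\chi,F)$ for every integer $\chi\ge 1$. This is the left inequality in (3).
\item The last map $\bigoplus_i F_p\cQ^{\chi+r-1}\xrightarrow{z}F_p\cQ^{\chi+r}$ is surjective. Hence $F_p\cQ^{\ell}=\sum_i z_iF_p\cQ^{\ell-1}$ for $\ell>r$, which gives $p(\cQ^\ell,F)\ge p(\cQ^{\ell-1},F)$.
\end{itemize}
Together these give $p(\cQ^\ell,F)=p(\cQ^r,F)$ for all $\ell\ge r$.

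\emph{Step 2 (the range $\chi\le 0$).} By \propositionref{prop-KoszulFiltAcyclic}, $C^\chi(\cQ,F)$ is filtered acyclic for $\chi<0$. I also want this for $\chi=0$, which by \propositionref{prop-Restriction} amounts to $\sigma^*Q=0$. I would apply $\sigma^*$ to $0\to L\to{\rm Sp}(B_f)\to Q\to 0$.
\begin{itemize}
\item Since $L$ is pulled back from $Z$, we get $\sigma^*L\cong i_*\QQ^H_Z[n][r]$, up to the correct Tate twist.
\item We have $\sigma^*{\rm Sp}(B_f)=\sigma^*B_f\cong i_*i^*\QQ^H_X[n]$ by base change, concentrated in the same single degree.
\item The induced map between them is the identity of $i_*\QQ_Z^H$, i.e. it is the restriction of the defining morphism $L\to{\rm Sp}(B_f)$ to the zero section.
\end{itemize}
Hence $\sigma^*Q=0$, and $C^0(\cQ,F)$ is filtered acyclic.

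Checking carefully that this map is indeed an isomorphism, compatibly with the filtration conventions, is the step I expect to be the main obstacle. I would do it directly with the complex $C^0$ and the explicit description of $\cK$ and $\cL=\cK[z]$.

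Granting $\chi\le 0$, I read off two inequalities from $C^\chi(\cQ,F)$.
\begin{itemize}
\item Surjectivity of the last map $\bigoplus_i F_{p-1}\cQ^{\chi+1}\xrightarrow{\de_z}F_p\cQ^\chi$ gives $p(\cQ^\chi,F)\ge p(\cQ^{\chi+1},F)+1$ for all $\chi\le 0$.
\item Injectivity of the first map $F_{p-r}\cQ^{\chi+r}\xrightarrow{\de_z}\bigoplus_i F_{p-r+1}\cQ^{\chi+r-1}$ gives, with $m=\chi+r-1$, the bound $p(\cQ^m,F)\le p(\cQ^{m+1},F)+1$ for all integers $m\le r-1$.
\end{itemize}
For $m\le 0$ the two combine to $p(\cQ^{m},F)=p(\cQ^{m+1},F)+1$, and iterating gives (2). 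For $1\le k<r$ the second item gives the right inequality in (3).

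\emph{Step 3 (conclusion of (1)).} It remains to show that $p(Q^\ZZ,F)=\min_k p(\cQ^k,F)$ is attained at $k=r$.
\begin{itemize}
\item For $k\ge r$, Step 1 gives $p(\cQ^k,F)=p(\cQ^r,F)$.
\item For $1\le k<r$, iterating the left inequality of (3) gives $p(\cQ^k,F)\ge p(\cQ^r,F)$.
\item For $k\le 0$, (2) gives $p(\cQ^k,F)\ge p(\cQ^1,F)\ge p(\cQ^r,F)$.
\end{itemize}
This proves (1).
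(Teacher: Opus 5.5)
Your proposal is correct and follows essentially the paper's proof: you use filtered acyclicity of $B^\chi(\cQ,F)$ for $\chi>0$ and of $C^\chi(\cQ,F)$ for $\chi<0$, and at $\chi=0$ you apply $\sigma^*$ to $0\to L\to {\rm Sp}(B_f)^\ZZ\to Q^\ZZ\to 0$. The paper extracts only $\cohH^0\sigma^*Q=0$ there; your full vanishing $\sigma^*Q=0$ also justifies the injectivity at the left end of $C^0(\cQ,F)$, which the paper uses tacitly for $k=r-1$ in (3) and for $r=1$ in (2), and that injectivity is immediate since $\ker(\de_z\colon \cQ^r\to\bigoplus_i\cQ^{r-1})=\cK/\cK=0$; note also the slip that $\sigma^*L\cong i_*\QQ^H_Z[n]$ with no twist, not $i_*\QQ^H_Z[n][r]$.
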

\begin{proof} The filtered acyclicity of $B^j(\cQ,F)$ for $j > 0$, given by \propositionref{prop-KoszulFiltAcyclic} above, tells us that there are surjections
\[ \bigoplus_{i=1}^r F_p \cQ^{r+j-1} \xrightarrow[]{z_i} F_p \cQ^{r+j},\]
which inductively proves $p(\cQ^{\ell},F) \geq p(\cQ^r,F)$ for all $\ell \geq r$. The same filtered acyclicity tells us that the map
\[ F_p \cQ^j \xrightarrow[]{z_i} \bigoplus_{i=1}^r F_p \cQ^{j+1}\]
is injective for all $j > 0$, so that
\[ p(\cQ^{j},F) \geq p(\cQ^{j+1},F) \text{ for all } j > 0.\]

These together show that $p(\cQ^r,F) = p(\cQ^{\ell},F)$ for all $\ell \geq r$.

The filtered acyclicity of $C^j(\cQ,F)$ for all $j < 0$ gives surjections
\[ \bigoplus_{i=1}^r F_p \cQ^{j+1} \xrightarrow[]{\de_{z_i}} F_{p+1} \cQ^j,\]
which gives the inequality $p(\cQ^{j},F) \geq p(\cQ^{j+1},F)+1$. Moreover, it gives injections
\[ F_p \cQ^{j+r} \xrightarrow[]{\de_{z_i}} \bigoplus_{i=1}^r F_{p+1} \cQ^{j+r-1}\]
so that $p(\cQ^{j},F) \leq p(\cQ^{j+1},F)+1$ for all $j < r$.

This proves almost all claims, except we need to show that $p(\cQ^0,F) = p(\cQ^1,F)+1$.

The long exact sequence in cohomology for $\sigma^*$ applied to the short exact sequence \eqref{eq-QSES} gives isomorphisms
\[ \cohH^0 \sigma^*(Q) \cong \cohH^0 \sigma^*({\rm Sp}(B_f)) \cong \cohH^0 \sigma^*(B_f).\]
and all three modules vanish, because $B_f$ has no quotient object supported on $X \times \{0\}$.

Thus, $\cohH^0(C^0(\cQ,F)) = 0$, giving a surjection
\[ \bigoplus_{i=1}^r F_p \cQ^1 \xrightarrow[]{\de_{z_i}} F_{p+1} \cQ^0,\]
and proving the last remaining claim.
\end{proof}

This leads to a natural condition for equality in \propositionref{prop-InequalityPRS}.

\begin{proposition} \label{prop-equalityPRS} Assume $p(\cQ^{r-1},F) = p(Q^\ZZ,F)+1$. Then
\[ \HRH(Z) = p(Q^\ZZ,F) + n -1.\]
\end{proposition}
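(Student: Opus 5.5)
The plan is to prove the reverse inequality $\HRH(Z) \leq p(Q^\ZZ,F) + n - 1$; together with \propositionref{prop-InequalityPRS} this gives the claimed equality. Set $p = p(Q^\ZZ,F)$ and $k = p + n$. By \propositionref{prop-PRSQ}(2), $\HRH(Z) \geq k$ holds if and only if $F_{k-n}\cohH^r(\sigma^! Q^\ZZ) = F_p \cohH^r(\sigma^! Q^\ZZ)$ vanishes. So it suffices to show that $F_p \cohH^r(\sigma^! Q^\ZZ) \neq 0$.

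To compute this piece, I would use \propositionref{prop-Restriction} applied to the monodromic module $Q$. Since $Q$ is monodromic, the $V$-filtration along $z$ is given by the monodromic decomposition, so ${\rm Gr}_V^\chi(\cQ) = \cQ^\chi$. The last term of the filtered complex $B^0(\cQ,F)$ then gives
\[ F_p \cohH^r \sigma^!(Q) \cong F_p\cohH^r B^0(\cQ) = \frac{F_p \cQ^r}{\sum_{i=1}^r z_i F_p \cQ^{r-1}}.\]
Here I use the strictness of $B^0(\cQ,F)$, also from \propositionref{prop-Restriction}, so that the filtration on cohomology is the induced quotient filtration. I also use $\sigma^!(Q) = \sigma^!(Q^\ZZ)$ for monodromic modules, as noted before \propositionref{prop-PRSQ}.

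Next I bring in the hypothesis $p(\cQ^{r-1},F) = p+1$. It means $F_p\cQ^{r-1} = 0$, so the denominator above vanishes. Hence $F_p\cohH^r\sigma^!(Q^\ZZ) \cong F_p\cQ^r$. By \lemmaref{lem-structurePQ}(1), $p(\cQ^r,F) = p$, so $F_p\cQ^r \neq 0$. This gives $\HRH(Z) < p+n$, i.e. $\HRH(Z) \leq p + n - 1$, which is what we need.

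The only step I expect to need care is the indexing convention. The complex $B^0$ uses the same index $F_p$ on every term, with no shift. I need to check that this matches the normalization in \propositionref{prop-PRSQ}, where the condition appears as $F_{k-n}$, and that it is the convention under which \lemmaref{lem-structurePQ} compares $p(\cQ^{r-1},F)$ and $p(\cQ^r,F)$. Both use the same $B^\chi(\cQ,F)$ conventions as in the proof of \propositionref{prop-InequalityPRS}, so I expect no extra shift to appear. Once this bookkeeping is confirmed, the argument is complete.
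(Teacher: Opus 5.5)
Your proposal is correct and is essentially the paper's proof: the hypothesis gives $F_p\cQ^{r-1}=0$, so $F_p\cohH^r\sigma^!(Q^\ZZ)\cong F_p\cQ^r\neq 0$, whence $\HRH(Z)<p+n$ by \propositionref{prop-PRSQ}, and \propositionref{prop-InequalityPRS} supplies the reverse inequality; the nonvanishing of $F_p\cQ^r$ is \lemmaref{lem-structurePQ}(1), which the paper uses implicitly. The indexing you were worried about is consistent, since $B^0(\cQ,F)$ carries the unshifted $F_p$ on every term, exactly as in the paper's computation.
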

\begin{proof} The assumption tells us that $F_{p(Q^\ZZ,F)} \cQ^{r-1} = F_{p(Q^\ZZ,F)} {\rm Gr}_V^{r-1}(B_f) = 0$. Then
\[ F_{p(Q^\ZZ,F)} \cohH^r(\sigma^!(\cQ)) = F_{p(Q^\ZZ,F)}\cQ^r / \sum_{i=1}^r z_i F_{p(Q^\ZZ,F)} \cQ^{r-1} = F_{p(Q^\ZZ,F)} \cQ^r \neq 0,\]
proving that $\HRH(Z) < p(Q^\ZZ,F) + n$.
\end{proof}

The next integer invariant is the \emph{minimal integer spectral number} of $Z$ at a point $x\in Z$, which we denote by ${\rm Sp}_{\min,\ZZ}(Z,x)$.

An immediate application of \lemmaref{lem-lowerBoundSpectralNumbers} above gives the following lower bound. Here \[p(Q_x^\ZZ,F) = \min\{p\in \ZZ \mid (F_p \cQ^\ZZ)_x \neq 0\}\] is the lowest non-vanishing index of the stalk of the Hodge filtration at $x$.

\begin{proposition} \label{prop-SpectrumBound} Let $x\in Z$ be a point in the local complete intersection variety $Z$. Then
\[ {\rm Sp}_{\min,\ZZ}(Z,x) \geq p(Q^\ZZ_x,F) + n +1.\]
\end{proposition}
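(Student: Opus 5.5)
We will apply \lemmaref{lem-lowerBoundSpectralNumbers} to the monodromic mixed Hodge module $Q$, working locally around $x$. Since the spectrum is computed at the point $x$ (through $i_\xi^*$ with $\xi \in \{x\}\times \mathbb A^r_z$), we may replace $X$ by a small affine neighborhood $X'$ of $x$. In doing so, $p(Q^\ZZ,F)$ computed on $X'\times\mathbb A^r_z$ becomes $p(Q^\ZZ_x,F)$, provided the neighborhood is chosen small enough; we make this reduction first.

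The first step is to identify the reduced spectrum with the spectrum of $Q$. By the additivity along \eqref{eq-QSES} and the computation recorded in the preliminaries, we have $\widehat{\rm Sp}(Q,x) = {\rm Sp}(Z,x)$. In particular, the integer part of ${\rm Sp}(Z,x)$ coincides with $\widehat{\rm Sp}_\ZZ(Q,x) = \sum_{j>0} m_{j,x}(Q)t^j$. Consequently ${\rm Sp}_{\min,\ZZ}(Z,x)$ is the smallest positive integer $j$ with $m_{j,x}(Q)\neq 0$, or $+\infty$ if no such $j$ exists.

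The second step is to set $p = p(Q^\ZZ_x,F) + n + 1$. Since $\dim X = n$, we get $p - 1 - \dim X = p(Q_x^\ZZ,F)$. We want vanishing of $F_{p(Q^\ZZ_x,F)-1}$, so we apply the lemma with $p' = p(Q^\ZZ_x,F)+n$. Its hypothesis is $F_{p'-1-n}\cQ^\ZZ = F_{p(Q^\ZZ_x,F)-1}\cQ^\ZZ = 0$ near $x$, which holds by the definition of $p(Q^\ZZ_x,F)$ once we shrink to the neighborhood from the first step. The lemma then gives $m_{j,x}(Q)=0$ for all integers $j < p(Q_x^\ZZ,F)+n$.

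This yields only ${\rm Sp}_{\min,\ZZ}(Z,x)\geq p(Q^\ZZ_x,F)+n$, one short of the claim. The main obstacle is therefore to account for the index shift between the normalization in \lemmaref{lem-lowerBoundSpectralNumbers} and the definition of $m_{\alpha,x}$, which uses ${\rm Gr}^F_{\lceil\alpha\rceil-\dim Z-1}$ with $\dim Z = n-r$, together with the cohomological shift $k-r$ and the filtration shift built into $F_p\cL = (F_{p+r}\cK)[z]$. I would carefully recheck the conventions (the lemma is a special case of \cite{DirksMicrolocal}*{Lem. 2.7}, where the relevant dimension may be that of $X\times\mathbb A^r_z$, namely $n+r$) and redo the estimate directly. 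For $j$ integer, $m_{j,x}(Q)$ involves ${\rm Gr}^F_{j-n+r-1}$ of $\cohH^{k-r}i_\xi^*(\cQ^{\ZZ})$. The restriction $i_\xi^*$ to a general point of the $r$-dimensional fiber shifts the Hodge filtration by $r$, landing in $F_{j-n-1}$ of the module. This vanishes when $j-n-1 < p(Q^\ZZ_x,F)$, i.e. $j < p(Q^\ZZ_x,F)+n+1$, which gives the stated bound. Pinning down this shift correctly, using that $i_\xi^*$ is noncharacteristic restriction at a general point so that $F$ shifts exactly by the codimension, is the step I expect to require care.
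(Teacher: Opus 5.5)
Your first two steps are fine. You are also right that \lemmaref{lem-lowerBoundSpectralNumbers}, read literally (vanishing only for $j<p$), gives only ${\rm Sp}_{\min,\ZZ}(Z,x)\geq p(Q^\ZZ_x,F)+n$. The paper's proof consists solely of applying that lemma to $Q$, together with $\widehat{\rm Sp}(Q,x)={\rm Sp}(Z,x)$. It therefore tacitly uses the lemma with vanishing for all $j\leq p$, so checking the index directly was the right move.

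The gap is in your justification of the decisive shift. The number you use, $r$, is correct, but the reason you give does not produce it.
\begin{itemize}
\item The map $i_\xi\colon\{\xi\}\to X\times\mathbb A^r_z$ is not a non-characteristic restriction for $\cQ^\ZZ$. Genericity of $\xi$ is only genericity in the $\mathbb A^r_z$-direction, and in the $X$-directions $\cQ^\ZZ$ is in general not smooth at $\xi$ (in the isolated case it is supported on $\{x\}\times\mathbb A^r_z$).
\item If $i_\xi$ were non-characteristic, the shift would be its full codimension $n+r$, and that is false. For $L$ one has $F_{-n-1}\cL=0$, so a shift by $n+r$ would force $m_{j,x}(L)=0$ for all $j\leq n$. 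But $m_{\dim Z+1,x}(L)\neq 0$ (the computation behind $\widehat{\rm Sp}(Q,x)={\rm Sp}(Z,x)$), and $\dim Z+1=n-r+1\leq n$.
\item For the same reason, $\cohH^{k-r}i_\xi^*(\cQ^\ZZ)$ is not a fibre of $F_\bullet\cQ^\ZZ$, so ``landing in $F_{j-n-1}$ of the module'' is not justified as written.
\end{itemize}

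What is missing is control of the characteristic part of the restriction. Factor $i_\xi$ as $\{\xi\}\xrightarrow{j_\xi}\{x\}\times\mathbb A^r_z\xrightarrow{\sigma_x}X\times\mathbb A^r_z$, and treat the two steps separately.

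For $\sigma_x$, apply \propositionref{prop-Restriction} locally, with coordinates $x_1,\dots,x_n$ centred at $x$ in the role of $t_1,\dots,t_r$, and $V$ now the $V$-filtration along them. For $-n\leq k\leq 0$, the degree-$k$ term of $C^0(\cQ^\ZZ,F)$ is $\bigoplus F_{p+k}{\rm Gr}_V^{-k}(\cQ^\ZZ)$. Hence $F_p\cohH^k\sigma_x^*(\cQ^\ZZ)=0$ whenever $p<p(Q^\ZZ_x,F)-k$, and in particular whenever $p<p(Q^\ZZ_x,F)$. So the characteristic restriction never lowers the right-indexed starting index.

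For $j_\xi$: the modules $\cohH^k\sigma_x^*(\cQ^\ZZ)$ are holonomic on $\mathbb A^r_z$, hence smooth near a general $\xi$. Only this second restriction is non-characteristic. It contributes only in degree $-r$, and it raises the starting index by exactly $r$, which is the passage from right to left indexing on $\mathbb A^r_z$.

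Therefore ${\rm Gr}^F_P\cohH^{k-r}i_\xi^*(\cQ^\ZZ)=0$ for all $k$ and all $P<p(Q^\ZZ_x,F)+r$. Since $m_{j,x}(Q)$ involves $P=j-(n-r)-1$, you get $m_{j,x}(Q)=0$ for all $j<p(Q^\ZZ_x,F)+n+1$, which is the proposition. With this repair, your argument proves exactly the sharp form of \lemmaref{lem-lowerBoundSpectralNumbers} that the paper's one-line proof relies on.
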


The same condition in \propositionref{prop-equalityPRS} above gives a condition which allows us to ensure equality in the proposition above in the isolated singularities case.

\begin{proposition} Let $x \in Z$ be an isolated singular point. Assume $p(\cQ^{r-1}_x,F) = p(Q^\ZZ_x,F)+1$. Then
\[ {\rm Sp}_{\min,\ZZ}(Z,x) = p(Q_x^\ZZ,F) + n +1.\]
\end{proposition}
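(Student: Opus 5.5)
By \propositionref{prop-SpectrumBound} we already have ${\rm Sp}_{\min,\ZZ}(Z,x) \geq p(Q^\ZZ_x,F)+n+1$, so only the reverse inequality needs proof. Set $p_0 = p(Q^\ZZ_x,F)$ and $j_0 = p_0+n+1$. The plan is to show $m_{j_0,x}(Q) \neq 0$. Since $\widehat{\rm Sp}(Q,x) = {\rm Sp}(Z,x)$, this exhibits $j_0$ as an integer spectral number. By definition, $m_{j_0,x}(Q)$ is an alternating sum of the dimensions of ${\rm Gr}^F_{j_0-\dim Z-1}\cohH^{k-r} i_\xi^*(\cQ^{\ZZ})$. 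Here $j_0 - \dim Z - 1 = p_0 + r$, which is exactly the index where $\cQ^r$ first becomes nonzero after the shift by $r$ built into the Koszul complex $C^\chi$. So the lowest possible Hodge level is the one to compute.

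\textbf{Step 1: the isolated singularity hypothesis.} Away from $x$, $Z$ is smooth, so ${\rm Sp}(B_f)^\ZZ = L$ there and $Q^\ZZ$ is supported on $\{x\}\times\mathbb A^r_z$. Being monodromic, $Q^\ZZ$ then has the form of a direct sum of pushforwards from the point $x$ of monodromic modules on $\mathbb A^r_z$. In particular each $\cQ^k$ is a finite-dimensional graded object near $x$ (after taking $F$-graded pieces), and $i_\xi^*$ for general $\xi$ amounts to restricting a monodromic module on $\mathbb A^r_z$ to a general point. For a general point, only the $\cH^{-r}$ term survives up to the lisse part: on $\mathbb G_m$-orbits away from the origin, the module is smooth.

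\textbf{Step 2: compute the lowest Hodge piece of the restriction.} I would compute $i_\xi^*$ in two stages. First restrict along a general line through the origin in $\mathbb A^r_z$, or equivalently use that $i_\xi^*$ of a monodromic module is computed by the fiber of the corresponding local system on the complement of zero. Then read the lowest Hodge piece. I expect the relevant group to be the cokernel-type quotient
\[ F_{p_0}\cQ^r \Big/ \sum_i z_i F_{p_0}\cQ^{r-1} \]
or a dual kernel. The hypothesis $p(\cQ^{r-1}_x,F) = p_0+1$ kills $F_{p_0}\cQ^{r-1}$, so this group is just $F_{p_0}\cQ^r_x\neq 0$. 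This is the same mechanism as in \propositionref{prop-equalityPRS}. Moreover, by \lemmaref{lem-structurePQ}, all $\cQ^k$ with $k<r$ then start at level at least $p_0+1$, so no other term contributes at Hodge level $p_0+r$, and no cancellation occurs in the alternating sum.

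\textbf{Step 3: the main obstacle.} The hard part is justifying that the lowest graded piece ${\rm Gr}^F$ of the restriction to a general point $\xi$ really equals this quotient and lives in a single cohomological degree. I would first try to use the monodromic structure to identify $\cH^{k-r}i_\xi^*$ with the Koszul-type complex
\[ \cQ^{\chi} \xrightarrow{\;z_i - \xi_i\;} \cdots \]
which is filtered quasi-isomorphic to a complex built from $\cQ^{r-1}\to\cQ^r$ in the lowest degree, via the homogeneity of each $\cQ^k$ in $z$. The bound of \lemmaref{lem-lowerBoundSpectralNumbers} already shows that lower Hodge pieces vanish. Combining this with the argument of \propositionref{prop-equalityPRS}, the first nonvanishing piece should be nonzero, which gives $m_{j_0,x}\neq 0$.
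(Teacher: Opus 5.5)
Your reduction is sound and follows the same outline as the paper's argument. By \propositionref{prop-SpectrumBound} only $m_{j_0,x}(Q)\neq 0$ is needed. The module $Q^\ZZ$ is supported on $\{x\}\times\mathbb A^r_z$, and at a general $\xi$ the restriction $i_\xi^*$ is the fiber of a generically smooth module, concentrated in the single degree $-r$. That, and not the vanishing of $F_{p_0}\cQ^k$ for $k<r$, is why the alternating sum has no cancellation. So everything depends on whether the fiber at $\xi$ of the lowest Hodge piece
$M:=F_{p_0}\cQ^\ZZ_x=\bigoplus_k F_{p_0}\cQ^k_x$
is nonzero; $M$ is a finitely generated graded $\CC[z_1,\dots,z_r]$-module, and that fiber is $M/(z-\xi)M$.

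\textbf{The gap.} The group you propose in Step 2, $F_{p_0}\cQ^r/\sum_i z_iF_{p_0}\cQ^{r-1}$, is computed at the \emph{origin} of $\mathbb A^r_z$. It is $F_{p_0}\cohH^r\sigma^!(\cQ)$, exactly as in \propositionref{prop-equalityPRS}, and it is the degree-$r$ part of $M/(z)M$. Homogeneity alone does not relate this to $M/(z-\xi)M$. The dimension of the generic fiber is the generic rank of $M$, which can be strictly smaller than $\dim M/(z)M$; equality holds precisely when $M$ is free. If $M$ were torsion, say $M=M_r$ with $z$ acting by zero, you would have $F_{p_0}\cQ^r_x\neq 0$ but a vanishing fiber at every $\xi\neq 0$, and then $m_{j_0,x}$ could be zero. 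Nothing in Steps 2--3 excludes this, and the filtered quasi-isomorphism you suggest ``via homogeneity'' is false in this generality.

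\textbf{The missing idea.} The paper imports it as \cite{DirksMicrolocal}*{Lem. 2.6}: under the hypothesis, $F_{p_0}\cQ^\ZZ_x$ has full support on $\mathbb A^r_z$. You can prove this with tools already in the paper.
\begin{itemize}
\item The hypothesis together with \lemmaref{lem-structurePQ} gives $M_k=F_{p_0}\cQ^k_x=0$ for all $k<r$, and $M_r\neq 0$.
\item Filtered acyclicity of $B^\chi(\cQ,F)$ for $\chi>0$ (\propositionref{prop-KoszulFiltAcyclic}) says the strand $M_\chi\to M_{\chi+1}^{\oplus r}\to\cdots\to M_{\chi+r}$ of the Koszul complex of $z_1,\dots,z_r$ on $M$ is exact for every $\chi\geq 1$.
\item For $\chi\leq 0$ every strand is zero, except that for $\chi=0$ the single term $M_r$ sits in the top spot.
\item Hence all Koszul cohomology below the top degree vanishes. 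So $z_1,\dots,z_r$ is $M$-regular, and by graded Nakayama $M\cong M_r\otimes_{\CC}\CC[z_1,\dots,z_r]$ as $\CC[z]$-modules.
\end{itemize}
The fiber of $M$ at every point $(x,\xi)$ is then nonzero. So the Hodge graded piece of $\cohH^{-r}i_\xi^*(\cQ^\ZZ)$ at level $j_0-\dim Z-1$ is nonzero, and $m_{j_0,x}(Q)\neq 0$. Note where the hypothesis actually enters: it kills the low Koszul strands and so forces $M$ to be free. It is not merely simplifying the cokernel at the origin, which is how your proposal uses it.
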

\begin{proof} The assumption allows us to use \cite{DirksMicrolocal}*{Lem. 2.6}, which tells us that
\[ {\rm Supp}_{\mathbb A^r_z}(F_{p(\cQ^{\ZZ},F)} \cQ^\ZZ) = \mathbb A^r_z.\]
Then the proof goes through in exactly the same way as the last step of the proof of \cite{DirksMicrolocal}*{Thm. 1.1}.
\end{proof}

\begin{remark} We have the following: $Q^\ZZ = 0$ if and only if $p(Q^\ZZ,F) = + \infty$, which implies that ${\rm Sp}_{\min,\ZZ}(Z,x) = +\infty$ and that $Z$ is a rational homology manifold.

It is easy to check that $Q^\ZZ =0$ if and only if ${\rm Gr}_V^{r-1}(B_f) = 0$. \exampleref{eg-Torrelli} below shows, then, that $Q^\ZZ = 0$ is too strong: there is a complete intersection variety $Z$ which is a rational homology manifold but with $Q^\ZZ \neq 0$.
\end{remark}

\begin{remark} \label{rmk-HypersurfaceCase} As mentioned in the introduction, in the hypersurface case, the situation simplifies immensely. Let $Z = V(f) \subseteq X$. It is well known that $Z$ is a rational homology manifold if and only if $\varphi_{f,1}(\shO_X) = 0$, see for example, \cite{JKSY}*{Thm. 3.1}. By the previous remark, this is true if and only if $Q^\ZZ = 0$. In this remark, we give another proof of that fact.

We use the following notation as we have fixed a defining function for $Z$:
\[ {\rm Sp}_{\min,\ZZ}(Z,x) = {\rm Sp}_{\min,\ZZ}(f,x),\]
\[ \HRH(Z) = \HRH(f).\]

If $Q^\ZZ \neq 0$, then in this case we have $p({\rm Gr}_V^0(B_f),F) = p(Q^\ZZ,F) +1$. Indeed, this follows immediately from the fact that
\[ {\rm can} = \de_t \colon {\rm Gr}_V^1(B_f,F) \to {\rm Gr}_V^0(B_f,F[-1])\]
is strictly surjective. 

By \propositionref{prop-equalityPRS}, we see that $\HRH(f) = p(Q^\ZZ,F) + n -1 = p(\varphi_{f,1}(\shO_X),F) + n-2$, which shows that $\HRH(f) = +\infty$ if and only if $p(\varphi_{f,1}(\shO_X),F) = +\infty$, proving the equivalence mentioned at the beginning of the remark. Moreover, if $Z$ has an isolated singularity at $x\in Z$, then we get
\[ {\rm Sp}_{\min,\ZZ}(Z,x) = p(Q^\ZZ,F) + n +1 = p(\varphi_{f,1}(\shO_X),F) + n.\]

This gives another proof of the equality noted in \cite{JKSY}*{Formula (13)}.
\end{remark}

In \cite{CDMO}, when defining the minimal exponent for the complete intersection subvariety $Z = V(f_1,\dots, f_r) \subseteq X$, an auxiliary construction is used. This is the \emph{general linear combination hypersurface}, defined by $g = \sum_{i=1}^r y_i f_i$ on $Y = X\times \mathbb A^r_y$. Let $U = Y \setminus (X\times \{0\})$ with open embedding $j\colon U \to Y$ and let $\sigma \colon X\times \{0\} \to Y$ be the inclusion of the zero section. Then we have the exact triangle
\[ \sigma_* \sigma^! \varphi_{g,1}(\QQ^H_Y[n+r]) \to \varphi_{g,1}(\QQ^H_Y[n+r]) \to j_* \varphi_{g\vert_U,1}(\QQ^H_U[n+r]) \xrightarrow[]{+1}\]

In \cite{DirksMicrolocal}, the $\Dmod$-module $\varphi_{g,1}(\shO_Y)$ is compared to the Fourier-Laplace transform of ${\rm Sp}(B_f)$. This comparison implies that $\sigma_* \sigma^! \varphi_{g,1}(\QQ^H_Y[n+r])$ has a unique non-vanishing cohomology module in the local complete intersection case, namely, the 0-th one. This is due to the fact that $\sigma_* \sigma^*{\rm Sp}(B_f)$ has a unique non-vanishing cohomology module, and the Fourier-Laplace transform interchanges the two types of restriction to the zero section.

Thus, this exact triangle is actually a short exact sequence
\[ 0 \to \sigma_* \sigma^! \varphi_{g,1}(\QQ^H_Y[n+r]) \to \varphi_{g,1}(\QQ^H_Y[n+r]) \to j_* \varphi_{g\vert_U,1}(\QQ^H_U[n+r]) \to 0.\]

Each term in the short exact sequence is monodromic along the variables $y_1,\dots, y_r$. In fact, this is the Fourier-Laplace transform of the short exact sequence \eqref{eq-QSES}. As a corollary of this fact, we get the following.

\begin{corollary} \label{cor-QVanish} In the above notation, we see
\[ Q^\ZZ = 0 \text{ if and only if } \varphi_{g\vert_U,1}(\shO_U) = 0 \text{ if and only if } V(g\vert_U) \subseteq U \text{ is a rational homology manifold.}\]
In particular, $V(g\vert_U)$ being a rational homology manifold implies $Z$ is one, too.
\end{corollary}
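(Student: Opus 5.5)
The plan is to use the short exact sequence
\[ 0 \to \sigma_* \sigma^! \varphi_{g,1}(\QQ^H_Y[n+r]) \to \varphi_{g,1}(\QQ^H_Y[n+r]) \to j_* \varphi_{g\vert_U,1}(\QQ^H_U[n+r]) \to 0,\]
together with the identification, recalled just above from \cite{DirksMicrolocal}, of this sequence with the Fourier--Laplace transform of \eqref{eq-QSES}. Under this identification, $L$ corresponds to $\sigma_*\sigma^!\varphi_{g,1}$, ${\rm Sp}(B_f)^\ZZ$ corresponds to $\varphi_{g,1}(\QQ^H_Y[n+r])$, and $Q^\ZZ$ corresponds to $j_*\varphi_{g\vert_U,1}(\QQ^H_U[n+r])$. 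Only the underlying $\Dmod$-modules matter, so Hodge filtrations play no role here.

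\textbf{First equivalence.} The monodromic Fourier--Laplace transform is an equivalence of categories of monodromic regular holonomic $\Dmod$-modules. Hence $Q^\ZZ = 0$ if and only if $j_*\varphi_{g\vert_U,1}(\shO_U) = 0$. Since $j^*j_* = \mathrm{id}$ on $U$, this holds if and only if $\varphi_{g\vert_U,1}(\shO_U) = 0$.

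\textbf{Second equivalence.} This is the hypersurface criterion recalled in the introduction and in \remarkref{rmk-HypersurfaceCase} (\cite{VanCycleRHM}*{Thm. 3}, \cite{JKSY}*{Thm. 3.1}), applied to the hypersurface $V(g\vert_U)$ in the smooth variety $U$. The criterion is local, so it applies to $g\vert_U$ as a defining function on $U$. One should check that $g\vert_U$ is not identically zero on any component of $U$. This is clear: $U$ is connected because $X$ is, and $g$ is nonzero because the $f_i$ are not all zero.

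\textbf{The final claim.} If $V(g\vert_U)$ is a rational homology manifold, then $Q^\ZZ = 0$ by the two equivalences. The earlier remark then gives that $Z$ is a rational homology manifold: $Q^\ZZ=0$ forces $\cohH^r(\sigma^!Q^\ZZ) = 0$, hence $\HRH(Z) = +\infty$ by \propositionref{prop-PRSQ}, or directly by \propositionref{prop-InequalityPRS}.

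\textbf{Main obstacle.} The key step is making precise that the transform matches $Q^\ZZ$ exactly with $j_*\varphi_{g\vert_U,1}$, and not merely up to something supported on the zero section. I would deduce this from the five lemma applied to the two short exact sequences: the transform is exact, and it sends $L$ to $\sigma_*\sigma^!\varphi_{g,1}$ and ${\rm Sp}(B_f)^\ZZ$ to $\varphi_{g,1}$. Then the cokernels agree.
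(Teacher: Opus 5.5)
Your proposal is correct and is essentially the paper's argument: it reads off $Q^\ZZ = 0 \iff j_*\varphi_{g\vert_U,1}(\shO_U)=0$ from the fact that the sequence for $\varphi_{g,1}$ is the Fourier--Laplace transform of \eqref{eq-QSES}, applies the hypersurface criterion to $V(g\vert_U)\subseteq U$, and gets the last claim from the earlier observation that $Q^\ZZ=0$ forces $Z$ to be a rational homology manifold (via \propositionref{prop-InequalityPRS}). The one thing to tighten is your ``main obstacle'' paragraph: the five lemma needs the transform to send the map $L\to{\rm Sp}(B_f)^\ZZ$ to the adjunction map $\sigma_*\sigma^!\varphi_{g,1}\to\varphi_{g,1}$, which is part of the cited identification of the whole sequence; alternatively you can bypass this, since by finite length any injection of $\sigma_*\sigma^!\varphi_{g,1}$ into $\varphi_{g,1}$ has image equal to the maximal submodule supported on the zero section.
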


As explained in the proof of \cite{DirksMicrolocal}*{Prop. 3.4}, there is an isomorphism
\[ F_p \varphi_{g,1}(\shO_Y)^{r-k} \cong F_{p-k+r} {\rm Gr}_V^{k}(B_f),\]
where the left-hand side is the corresponding monodromic piece. By the short exact sequence and strictness of morphisms with respect to the Hodge filtration, there is also an isomorphism
\begin{equation}\label{eq-phi}
    F_p j_* \varphi_{g\vert_U,1}(\shO_U)^{r-k} \cong F_{p-k+r}\cQ^k,
\end{equation} 
which we can use to prove the following.

\begin{proposition} Let $p(\varphi_{g\vert_U,1}(\shO_U),F) = \min\{p\mid F_p \varphi_{g\vert_U,1}(\shO_U) \neq 0\} = \min\{p\mid F_p j_*(\varphi_{g\vert_U,1}(\shO_U)) \neq 0\}$. Then
\[ p(\varphi_{g\vert_U,1}(\shO_U),F) = \min_{\ell \in [0,r-1]} \{p(\cQ^{r-\ell},F) -\ell\} = p(\cQ^1,F) - r+1.\]
\end{proposition}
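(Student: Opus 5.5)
The plan is to read off the first equality directly from the isomorphism \eqref{eq-phi}, and then get the second equality from \lemmaref{lem-structurePQ}.

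\emph{Step 1: decompose by monodromic pieces.} The module $j_*\varphi_{g\vert_U,1}(\shO_U)$ is monodromic along $y_1,\dots,y_r$. Its Hodge filtration is the direct sum of the Hodge filtrations on its monodromic pieces. Hence
\[ p(\varphi_{g\vert_U,1}(\shO_U),F) = \min_{m} p\big(j_*\varphi_{g\vert_U,1}(\shO_U)^{m},F\big).\]
The equality of the two minima in the definition is just the fact that $F_p$ of $j_*$ restricts to $F_p$ on $U$, and a section of $j_*$ is zero if its restriction to $U$ is zero.

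\emph{Step 2: translate each piece into a piece of $Q$.} By \eqref{eq-phi}, with $m = r-k$, we have $F_p\, j_*\varphi_{g\vert_U,1}(\shO_U)^{r-k} \neq 0$ if and only if $F_{p-k+r}\cQ^k \neq 0$. Therefore
\[ p\big(j_*\varphi_{g\vert_U,1}(\shO_U)^{r-k},F\big) = p(\cQ^k,F) - (r-k).\]
Writing $k = r-\ell$, this becomes $p(\cQ^{r-\ell},F) - \ell$.

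Next I need to know which monodromic indices can actually occur. I would check in the proof of \cite{DirksMicrolocal}*{Prop. 3.4} that the pieces of $\varphi_{g,1}(\shO_Y)$ with integer index are the ones matched with ${\rm Gr}_V^k$ for integer $k$. The task is then to show that the minimum over all integers $k$ is already attained for $k\in[1,r]$, i.e.\ for $\ell\in[0,r-1]$.

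\emph{Step 3: control the indices outside $[1,r]$ with \lemmaref{lem-structurePQ}.}
\begin{itemize}
\item For $k \geq r$, part (1) gives $p(\cQ^k,F) = p(\cQ^r,F)$. The correction $-(r-k) = k-r$ is then $\geq 0$, so these terms are $\geq p(\cQ^r,F)$, which is the $\ell = 0$ term.
\item For $k = 1-\ell'$ with $\ell' \geq 0$, part (2) gives $p(\cQ^{1-\ell'},F) - (r-1+\ell') = p(\cQ^1,F) - r + 1$. This is constant in $\ell'$ and equals the $\ell = r-1$ term.
\end{itemize}
This proves the first equality.

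\emph{Step 4: the second equality.} Part (3) of the lemma gives $p(\cQ^k,F) \leq p(\cQ^{k+1},F)+1$ for $k\in[1,r)$. Rewritten, this says $p(\cQ^k,F) - (r-k) \leq p(\cQ^{k+1},F) - (r-k-1)$. So the sequence $p(\cQ^k,F)-(r-k)$ is nondecreasing in $k$ on $[1,r]$, and its minimum is attained at $k=1$, which is $p(\cQ^1,F)-r+1$.

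\emph{Expected obstacle.} The only delicate point is the bookkeeping in Step 2. One has to be sure the index convention in \eqref{eq-phi} is valid for all integer $k$, including $k \leq 0$ and $k > r$, and that the non-integer monodromic pieces of $\varphi_{g\vert_U,1}$ do not enter, since we only take the unipotent part $\varphi_{\cdot,1}$. I would verify both against the Fourier–Laplace comparison in \cite{DirksMicrolocal}. Once that is settled, the rest is just the lemma.
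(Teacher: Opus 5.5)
Your proposal is correct and follows the paper's proof: both use \eqref{eq-phi} to write the invariant as a minimum of $p(\cQ^{r-\ell},F)-\ell$ over all integer indices, discard the indices outside $[1,r]$ using parts (1) and (2) of \lemmaref{lem-structurePQ}, and obtain the last equality from part (3). For the terms with $k\geq r$, you compare them with the $\ell=0$ term $p(\cQ^r,F)$, which is the comparison actually needed; the paper's written bound $>p(\cQ^r,F)-r$ is a slip that your version avoids.
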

\begin{proof} The last equality follows from \lemmaref{lem-structurePQ}, which implies
\[ p(\cQ^1,F) \leq p(\cQ^{\ell},F) + \ell -1 \text{ for all } \ell \in [1,r]\cap \ZZ,\]
and so $p(\cQ^1,F) - r +1 \leq p(\cQ^{r-\ell},F) - \ell$ for all $\ell \in [0,r-1]$.

Note that $p(j_*\varphi_{g\vert_U,1}(\shO_U),F) = p(\varphi_{g\vert_U,1}(\shO_U),F)$.

We have the formula \eqref{eq-phi} from above
\[ F_p j_* \varphi_{g\vert_U,1}(\shO_U)^{r-k} \cong F_{p-k+r}\cQ^k,\]
and so by definition, we get
\[ p(\varphi_{g\vert_U,1}(\shO_U),F) = \min_{k \in \ZZ} \{p(\cQ^{r-k},F) - k\}.\]

Thus, we need to check that this minimum is achieved for $k \in [0,r-1]\cap \ZZ$. For $k \geq r$, we have
\[ p(\cQ^{r-k},F) = p(\cQ^1,F) + 1+k-r,\]
and so
\[ p(\cQ^{r-k},F) - k = p(\cQ^{r-(r-1)},F) - (r-1),\]
and we get that $p(\varphi_{g\vert_U,1}(\shO_U),F) = \min_{k \in \ZZ_{< r}} p(\cQ^{r-k},F) - k$.

For $k \leq 0$, we have
\[ p(\cQ^{r-k},F) = p(\cQ^r,F),\]
and so
\[ p(\cQ^{r-k},F) - k  > p(\cQ^r,F) - r,\]
proving the desired equality.
\end{proof}

\begin{remark} \label{rmk-jumpsInterpretation} 
Another way to phrase the result is the following: let 
\[j(Z) = \# \{\ell \in [1,r-1]\cap \ZZ \mid p(\cQ^{r-\ell},F) = p(\cQ^{r-\ell+1},F)\}\] be the number of times when the lowest Hodge piece doesn't jump in the interval $[1,r-1]$. Then
\[ p(Q^\ZZ,F) = p(\varphi_{g\vert_U,1}(\shO_U),F) + j(Z).\]
Indeed, this follows from the observation
\[ p(\cQ^{r-\ell},F) = p(Q^\ZZ,F) + \ell - |\{ j \leq \ell \mid p(\cQ^{r-j},F) = p(\cQ^{r-j+1},F)\}|,\]
so that $p(\cQ^{r-\ell},F) - \ell = p(Q^\ZZ,F) - (\text{the number of missed jumps up to } \ell)$. The minimum of this is clearly when $\ell$ is maximal, proving the claim.
\end{remark}

\begin{corollary}\label{cor-HRHU} Let $j(Z)$ be the number of missed jumps in the interval $[1,r-1]$ as defined above. Then
\begin{enumerate}
    \item $\HRH(Z) \geq \HRH(g\vert_U) -r + j(Z)+1.$
    \item If $p(\cQ^{r-1},F) = p(Q^\ZZ,F)+1$ (which is true if $j(Z)=0$), then $0 \leq j(Z) < r-1$ and we have equality \[\HRH(Z) = \HRH(g\vert_U) -r+j(Z)+1.\]
\end{enumerate}
\end{corollary}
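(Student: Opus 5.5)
We combine \propositionref{prop-InequalityPRS}, \propositionref{prop-equalityPRS}, and \remarkref{rmk-jumpsInterpretation}, applying the hypersurface results to $g\vert_U$. First we identify $\HRH(g\vert_U)$ in terms of $p(\varphi_{g\vert_U,1}(\shO_U),F)$. Since $U$ is smooth of dimension $n+r$, the argument of \theoremref{thmhypersurfaces} applies verbatim on $U$; it is local and only uses the $V$-filtration along $g\vert_U$. It gives
\[ \HRH(g\vert_U) = p(\varphi_{g\vert_U,1}(\shO_U),F) + (n+r) - 2.\]
(If $V(g\vert_U)$ is a rational homology manifold, both sides are $+\infty$, and by \corollaryref{cor-QVanish} so is everything else.)

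For part (1), combine this with \remarkref{rmk-jumpsInterpretation}, which says $p(Q^\ZZ,F) = p(\varphi_{g\vert_U,1}(\shO_U),F) + j(Z)$. This yields
\[ p(Q^\ZZ,F) + n - 1 = \HRH(g\vert_U) - r - 1 + j(Z) + n - n + 1 \cdot 0 + 1,\]
that is, after simplifying, $p(Q^\ZZ,F) + n -1 = \HRH(g\vert_U) - r + j(Z) + 1$. Concretely,
\[ p(Q^\ZZ,F)+n-1 = \bigl(\HRH(g\vert_U) - n - r + 2 + j(Z)\bigr) + n - 1 = \HRH(g\vert_U) - r + j(Z)+1.\]
\propositionref{prop-InequalityPRS} then gives (1).

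For part (2), \propositionref{prop-equalityPRS} turns the inequality into an equality under the hypothesis $p(\cQ^{r-1},F) = p(Q^\ZZ,F)+1$. It remains to check the claims about $j(Z)$.

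\begin{itemize}
\item If $j(Z)=0$, there is no missed jump at $\ell=1$, so $p(\cQ^{r-1},F)\neq p(\cQ^r,F)$. By \lemmaref{lem-structurePQ}(3) and (1), the difference is at most one, so $p(\cQ^{r-1},F)=p(Q^\ZZ,F)+1$.
\item Conversely, under the hypothesis, $\ell=1$ is not a missed jump. Hence $j(Z)\le r-2<r-1$, and $j(Z)\geq 0$ trivially.
\end{itemize}

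This requires $r\ge 2$; the interval is empty when $r=1$, where the hypothesis holds automatically as in \remarkref{rmk-HypersurfaceCase} and $j(Z)=0$. For $r=1$ the bound $j(Z)<r-1$ would fail, so I would note this edge case, interpreting it as $j(Z)\le \max(r-2,0)$.

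The main obstacle is the first step: justifying that the hypersurface formula $\HRH = p(\varphi_{1},F)+\dim-2$ holds for $g\vert_U$ on the non-proper open set $U$. Since $\HRH$ and $p$ are both defined locally and the proof of \theoremref{thmhypersurfaces} is local, this should be routine. We also need $p(\varphi_{g\vert_U,1}(\shO_U),F)$ to agree with the quantity used in \remarkref{rmk-jumpsInterpretation}, which is how it was defined just before.
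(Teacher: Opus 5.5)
Your proposal is correct and is essentially the paper's proof: the jump identity $p(Q^\ZZ,F)=p(\varphi_{g\vert_U,1}(\shO_U),F)+j(Z)$ from \remarkref{rmk-jumpsInterpretation}, the hypersurface formula applied to $g\vert_U$ on the smooth $(n+r)$-dimensional $U$, and then \propositionref{prop-InequalityPRS} and \propositionref{prop-equalityPRS}; the hypersurface formula applies to $U$ directly, since $X$ was never assumed proper, so your ``main obstacle'' is not one. Your check of the parenthetical claims about $j(Z)$ via \lemmaref{lem-structurePQ} goes slightly beyond what the paper writes, including the observation that $j(Z)<r-1$ fails for $r=1$ (where $j(Z)=0=r-1$); just delete the garbled first displayed equation in part (1), since the computation that follows it is the correct one.
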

\begin{proof} We have equality
\[ p(Q^{\ZZ},F) = p(\varphi_{g\vert_U,1}(\shO_U),F) + j(Z),\]
and by \remarkref{rmk-HypersurfaceCase} the right-hand side is equal to
$\HRH(g\vert_U) - (n+r)+2 + j(Z)$.
Thus, we have
\[ \HRH(Z) \geq p(Q^\ZZ,F) +n -1 = \HRH(g\vert_U) -r + j(Z) +1,\]
as claimed.

For the last claim, equality holds by \propositionref{prop-equalityPRS}.
\end{proof}

We now describe the relation of the Bernstein--Sato polynomial to the $G$-filtration when $r=1$. Let $f \in \shO_X(X)$ define a non-empty hypersurface. The module $B_f = \bigoplus_{k\in \NN} \shO_X \de_t^k \delta_f$ admits an exhaustive filtration $G^\bullet B_f$ indexed by $\ZZ$ (defined below also for $r \geq 1$). This has the property that the Bernstein--Sato polynomial $b_f(s)$ of $f$ is the minimal polynomial of the action of $-\de_t t$ on ${\rm Gr}_G^0(B_f)$. As $f$ defines a non-empty hypersurface, it is easy to see that $(s+1) \mid b_f(s)$ using the usual description in terms of functional equations:
\[ b_f(s) f^s = P(s) f^{s+1} \text{ for some } P(s) \in \Dmod_X[s].\]

The reduced Bernstein--Sato polynomial $\widetilde{b}_f(s) = b_f(s)/(s+1)$ is an invariant of the singularities of $f$. The minimal exponent is
\[ \widetilde{\alpha}(f) = \min\{\lambda \in \QQ \mid \widetilde{b}_f(-\lambda) = 0\},\]
and so we can consider a similar formula, only considering integer roots:
\[ \widetilde{\alpha}_\ZZ(f) = \min\{j\in \ZZ \mid \widetilde{b}_f(-j) = 0\}.\]

We clearly have an inequality $\widetilde{\alpha}(f) \leq \widetilde{\alpha}_\ZZ(f)$.

The $G$-filtration has the property that
\[ {\rm Gr}_V^\lambda {\rm Gr}_G^0(B_f) \neq 0 \text{ if and only if } (s+\lambda) \mid b_f(s).\]

To study the reduced polynomial, Saito \cite{SaitoMicrolocal} introduced the \emph{microlocalization} $\widetilde{\cB}_f = B_f[\de_t^{-1}]$. This object carries a \emph{microlocal $V$-filtration} and a $G$-filtration. Importantly, these filtrations satisfy: the minimal polynomial of $-\de_t t$ on ${\rm Gr}_G^0(\widetilde{\cB}_f)$ is $\widetilde{b}_f(s)$ by \cite{SaitoMicrolocal}*{Prop. 0.3}, and
\[ {\rm Gr}_V^\lambda {\rm Gr}_G^0(\widetilde{\cB}_f) \neq 0\text{ if and only if } (s+\lambda) \mid \widetilde{b}_f(s).\]

Using the isomorphisms $\de_t^j\colon {\rm Gr}_V^{\lambda}{\rm Gr}_G^{k}(\widetilde{\cB}_f) \cong {\rm Gr}_V^{\lambda-j}{\rm Gr}_G^{k-j}(\widetilde{\cB}_f)$, we see then that
\[ \widetilde{\alpha}_\ZZ(f) \geq j \text{ if and only if } G^{-j+1} {\rm Gr}_V^0(\widetilde{\cB}_f) = 0.\]

Another important aspect of the microlocalization functor is that the natural map
\[ {\rm Gr}_V^0(B_f,F) \to {\rm Gr}_V^0(\widetilde{\cB}_f,F)\]
is an isomorphism, where $F_\bullet \widetilde{\cB}_f = \bigoplus_{k \leq \bullet+n} \shO_X \de_t^k \delta_f$. 

Using this property and the fact that $F_{k-n} \widetilde{\cB}_f \subseteq G^{-k}\widetilde{\cB}_f$, it follows that $p({\rm Gr}_V^0(B_f),F) \geq \widetilde{\alpha}_{\ZZ}(f)-n$.

By \propositionref{prop-SpectrumBound}, we get
\[ \widetilde{\alpha}_{\ZZ,x}(f) \leq {\rm Sp}_{\min,\ZZ}(f,x).\]

Even in the isolated hypersurface singularities case, strict inequality $\widetilde{\alpha}_{\ZZ}(f) < {\rm Sp}_{\min,\ZZ}(f,x)$ is possible \cite{JKSY}*{Rmk. 3.4d}.

\begin{remark} When $Z = V(f_1,\dots, f_r) \subseteq X$ is defined by a regular sequence, the minimal exponent is defined by
\[ \widetilde{\alpha}(Z) = \widetilde{\alpha}(g\vert_U),\]
where the right term is the minimal exponent for hypersurfaces defined above \cite{CDMO}.
Associated to the tuple $f_1,\dots, f_r$, there is also a Bernstein--Sato polynomial \cite{BMS}, related to the reduced Bernstein--Sato polynomial of $g$, by \cite{Mustata}:
\[ \widetilde{b}_g(s) = b_f(s).\]
Moreover, $(s+r) \mid b_f(s)$, so we can consider the \emph{reduced Bernstein--Sato polynomial} $\widetilde{b}_f(s) = b_f(s)/(s+r)$. The minimal exponent satisfies
$\widetilde{\alpha}(Z) = \min\{\lambda \in \QQ \mid \widetilde{b}_f(-\lambda) =0 \}$ as well by \cite{DirksMicrolocal}.
\end{remark}

One might ask whether $\widetilde{\alpha}_{\ZZ}(g\vert_U)$ satisfies a similar formula, that is, whether it is equal to 
\[\widetilde{\alpha}_{\ZZ}(Z) = \min\{j \in \ZZ \mid \widetilde{b}_f(-j) = 0\}.\] 

This equality would allow one to rewrite the results of \corollaryref{cor-HRHU} in terms of invariants of $Z$ rather than invariants of $g\vert_U$.

It is not true, however, that $\widetilde{b}_f(s) = \widetilde{b}_{g\vert_U}(s)$, and so the question is slightly subtle. In general, we have \cite{DirksMicrolocal}*{Lem. 5.2}:
\[ b_f(s) = \widetilde{b}_g(s) = \widetilde{b}_{g\vert_U}(s) \prod_{i\in I} (s+r+i),\]
where $I \subseteq \ZZ_{\geq 0}$ is some finite subset. So we can see that we have equality $\widetilde{\alpha}_\ZZ(Z) = \widetilde{\alpha}_\ZZ(g\vert_U)$ if either one is strictly less than $r$.

There are two cases: either $0\in I$ or $0 \notin I$. If $0\in I$, then we get
\[ \widetilde{b}_f(s) = \widetilde{b}_{g\vert_U}(s) \prod_{i\in I \setminus \{0\}} (s+r+i),\]
and so we have inequality $\widetilde{\alpha}_{\ZZ}(Z) \leq \widetilde{\alpha}_{\ZZ}(g\vert_U)$.

In the other case, we get
\[ \widetilde{b}_f(s) = \frac{\widetilde{b}_{g\vert_U}(s)}{(s+r)} \prod_{i\in I}(s+r+i),\]
and so we only get 
\[ \widetilde{\alpha}_{\ZZ}(g\vert_U) = \min\{r,\widetilde{\alpha}_{\ZZ}(Z)\}.\]

This case is possible, as \exampleref{eg-badbg} shows below.

\begin{corollary}\label{corprsbound} In the above notation, let $j(Z)$ be the number of missed jumps. Then
\begin{enumerate}
    \item $\HRH(Z) \geq \widetilde{\alpha}_{\ZZ}(g\vert_U) -r +j(Z)-1.$
    \item If $\widetilde{\alpha}_{\ZZ}(g\vert_U) > r$, then we have inequality \[\HRH(Z) \geq \widetilde{\alpha}_{\ZZ}(Z) -r +j(Z)-1.\]
\end{enumerate}
\end{corollary}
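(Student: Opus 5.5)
The plan is to derive both parts by combining \corollaryref{cor-HRHU} with the hypersurface comparison of \theoremref{thmhypersurfaces}, applied to the hypersurface $V(g\vert_U)$ in the smooth variety $U$ of dimension $n+r$. For part (2) I would then add the case analysis on the set $I$ carried out just before the statement.

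\emph{Step 1 (part (1)).} \corollaryref{cor-HRHU}(1) gives
\[\HRH(Z) \geq \HRH(g\vert_U) - r + j(Z) + 1.\]
It therefore suffices to show $\HRH(g\vert_U) \geq \widetilde{\alpha}_{\ZZ}(g\vert_U) - 2$.

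For this I apply \theoremref{thmhypersurfaces} at each point $u \in V(g\vert_U)$, which gives
\[\widetilde{\alpha}_{\ZZ,u}(g\vert_U) - 2 \leq \HRH_u(g\vert_U).\]
The theorem applies because $g\vert_U$ is a nonzero-divisor on $U$: $g$ is linear in the $y_i$ and the $f_i$ are not all zero. The global invariants are the minima of the local ones; equivalently, the defining vanishing conditions are local. Taking the minimum over $u$ therefore yields $\widetilde{\alpha}_{\ZZ}(g\vert_U) - 2 \leq \HRH(g\vert_U)$.

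Substituting, I get
\[\HRH(Z) \geq \widetilde{\alpha}_{\ZZ}(g\vert_U) - 2 - r + j(Z) + 1 = \widetilde{\alpha}_{\ZZ}(g\vert_U) - r + j(Z) - 1.\]
If $V(g\vert_U)$ is empty, or some invariant is $+\infty$, the conventions make the inequality trivial or consistent; this only needs a brief check.

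\emph{Step 2 (part (2)).} Assume $\widetilde{\alpha}_{\ZZ}(g\vert_U) > r$. I use the factorization $b_f(s) = \widetilde{b}_{g\vert_U}(s) \prod_{i \in I}(s+r+i)$ from \cite{DirksMicrolocal}*{Lem. 5.2}.

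If $0 \notin I$, the discussion preceding the statement gives $\widetilde{\alpha}_{\ZZ}(g\vert_U) = \min\{r, \widetilde{\alpha}_{\ZZ}(Z)\} \leq r$. This contradicts the assumption.

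Hence $0 \in I$. Then $\widetilde{b}_f(s) = \widetilde{b}_{g\vert_U}(s) \prod_{i \in I\setminus\{0\}}(s+r+i)$, so every root of $\widetilde{b}_{g\vert_U}$ is a root of $\widetilde{b}_f$. This gives $\widetilde{\alpha}_{\ZZ}(Z) \leq \widetilde{\alpha}_{\ZZ}(g\vert_U)$, and part (1) then yields
\[\HRH(Z) \geq \widetilde{\alpha}_{\ZZ}(Z) - r + j(Z) - 1.\]

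\emph{Main obstacle.} The only delicate point is the passage from the pointwise statement of \theoremref{thmhypersurfaces} to the global invariants on the non-affine open set $U$. If needed, I would argue locally on an open cover, since $\HRH$, the Hodge filtration vanishing, and Bernstein--Sato roots are all local and compatible with taking minima. I also need to confirm that the identification $\HRH(g\vert_U) = p(\varphi_{g\vert_U,1}(\shO_U),F) + (n+r) - 2$ used in \corollaryref{cor-HRHU} is consistent with the same normalization here.
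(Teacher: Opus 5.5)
Your proof is correct and is essentially the paper's argument. Routing part (1) through $\HRH(g\vert_U)$ via \corollaryref{cor-HRHU} and the first inequality of \theoremref{thmhypersurfaces} only inserts and then cancels the identity $\HRH(g\vert_U) = p(\varphi_{g\vert_U,1}(\shO_U),F) + n + r - 2$, whereas the paper chains $\widetilde{\alpha}_{\ZZ}(g\vert_U) \leq p(\varphi_{g\vert_U,1}(\shO_U),F) + n + r$ directly with \remarkref{rmk-jumpsInterpretation} and \propositionref{prop-InequalityPRS}; part (2) is the same case analysis on $I$ as in the paper, and neither point you flag as an obstacle is a real issue, since \theoremref{thmhypersurfaces} is stated for arbitrary smooth $X$ and both invariants are minima of their local versions.
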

\begin{proof} We have
\[ \widetilde{\alpha}_{\ZZ}(g\vert_U) \leq p(\varphi_{g\vert_U,1}(\shO_U),F) + (n+r), \]
and so we get that
\[ p(Q^\ZZ,F) - j(Z) + (n+r) \geq \widetilde{\alpha}_\ZZ(g\vert_U),\]
giving $p(Q^\ZZ,F) + n -1 \geq \widetilde{\alpha}_{\ZZ}(g\vert_U) +j(Z) -r -1$. Then the claim follows from \propositionref{prop-InequalityPRS}.

If we assume $\widetilde{\alpha}_{\ZZ}(g\vert_U) > r$, this implies that $(s+r) \nmid \widetilde{b}_{g\vert_U}(s)$, and so we are in the case $0\in I$. Thus, we have inequality $\widetilde{\alpha}_{\ZZ}(Z) \leq \widetilde{\alpha}_{\ZZ}(g\vert_U)$.
\end{proof}

As a consequence of these results, we obtain \theoremref{thm-openU} and \corollaryref{cor-bs}.

\begin{proof}[Proof of \theoremref{thm-openU}.]
    The first statement is \corollaryref{cor-QVanish}. The first inequality follows from the proof of \corollaryref{corprsbound}, and the second inequality from the statement of the same corollary. Finally, the last inequality follows from \corollaryref{cor-HRHU}.
\end{proof}

\begin{proof}[Proof of \corollaryref{cor-bs}]
   The inequalities follow from \theoremref{thm-openU} and the last line of \corollaryref{corprsbound}.

   The implication $\widetilde{\alpha}_{\ZZ}(Z) = +\infty$ implies rational homology manifold follows from the inequality $\widetilde{\alpha}_{\ZZ}(Z) -r-1\leq \HRH(Z)$. In the hypersurface case, the converse follows from the observation that $\widetilde{\alpha}_{\ZZ}(Z) = + \infty$ if and only if for all $j\in \ZZ$, we have ${\rm Gr}_G^j {\rm Gr}_V^0(B_f) = 0$, but then by exhaustiveness of the filtration $G$, this is true if and only if ${\rm Gr}_V^0(B_f) =0$, which is equivalent to the rational homology manifold condition for hypersurfaces.
\end{proof}

\begin{remark} We have
\[ \HRH(Z) < + \infty \text{ if and only if } \HRH(Z) \leq \frac{d-3}{2}.\]
Combined with the inequality above, we see that if
\[ \widetilde{\alpha}_{\ZZ}(g\vert_U) > \frac{d-3}{2} + r -j(Z) +1 = \frac{n+r-1}{2} -j(Z),\]
then $Z$ is a rational homology manifold.
\end{remark}

\begin{example}\label{prsbound} The previous remark  is most clear when $r = 2$. Indeed, in that case, we have $j \in\{0,1\}$. If $j = 0$, then by \propositionref{prop-equalityPRS}, we see that $\HRH(Z) = p(Q^\ZZ,F) + n -1$.
Otherwise, if $j =1$, then we see that if $Z$ is not a rational homology manifold, we have the inequality
\[ \widetilde{\alpha}_{\ZZ}(g\vert_U) \leq \frac{n-1}{2}.\]
\end{example}

Now, we give some partial results on the invariant $\widetilde{\alpha}_\ZZ(Z)$. As in the hypersurface case above, the polynomial $b_f(s)$ is controlled by the $G$-filtration on $B_f$. This is defined as follows: the ring $\Dmod_T$ has a $\ZZ$-indexed filtration
\[ V^k \Dmod_T = \left\{ \sum_{\beta,\gamma} P_{\beta,\gamma} t^\beta \de_t^\gamma \mid P_{\beta,\gamma} \in \Dmod_X, |\beta| \geq |\gamma|+k\right\},\]
and so we can define an exhaustive filtration
\[ G^\bullet B_f = (V^\bullet \Dmod_T)\cdot \delta_f.\]

Then $b_f(s)$ is the minimal polynomial of the action of $s = -\sum_{i=1}^r \de_{t_i} t_i$ on ${\rm Gr}_G^0(B_f)$. Moreover, we have
\begin{equation} \label{eq-testRoot} {\rm Gr}_V^\lambda {\rm Gr}_G^0(B_f) \neq 0 \text{ if and only if } (s+\lambda) \mid b_f(s). \end{equation}

Thus, it is worthwhile to study the induced $G$-filtration on ${\rm Sp}(B_f)^\ZZ$. We define it as
\[ G^\bullet {\rm Sp}(B_f)^{\ZZ} = \bigoplus G^{\bullet+k} {\rm Gr}_V^{r+k}(B_f),\]
where the shift by $k$ is to make it so that $G^\bullet {\rm Sp}(B_f)^{\ZZ}$ is a filtration by sub-$\Dmod_T$-modules. This induces $G$-filtrations on $\cL$ and $\cQ$ by the short exact sequence \eqref{eq-QSES}. The $G$-filtration on $\cL$ is rather simple by \cite{DirksMicrolocal}*{Lem. 5.1}: it satisfies $G^0 \cL = \cL$. In particular, for all $j > 0$, we have an isomorphism
\[ {\rm Gr}_G^{-j}({\rm Sp}(B_f)) \cong {\rm Gr}_G^{-j}(\cQ).\]

This immediately leads to the following:

\begin{proposition}\label{prop-qneqz} Assume $(s+r+j) \mid b_f(s)$ for some $j > 0$. Then $Q^\ZZ \neq 0$.
\end{proposition}
\begin{proof} As $(s+r+j) \mid b_f(s)$, this means that ${\rm Gr}_G^0 {\rm Gr}_V^{r+j}(B_f) \neq 0$. This is the $r+j$th monodromic piece of ${\rm Gr}_G^{-j}({\rm Sp}(B_f))$, proving that
\[ {\rm Gr}_G^{-j}({\rm Sp}(B_f)^{\ZZ}) = {\rm Gr}_G^{-j}(\cQ^{\ZZ}) \neq 0,\]
and so $\cQ^\ZZ \neq 0$.
\end{proof}

\begin{remark} \label{rmk-WeirdBehavior} The result is a bit surprising, in view of the equality
\[ b_f(s) = \widetilde{b}_{g\vert_U}(s) \prod_{i\in I} (s+r+i).\]

Indeed, the claim says that if there is any factor $(s+r+j) \mid b_f(s)$ with $j > 0$, then there is also an integer root in $\widetilde{b}_{g\vert_U}(s)$, because $V(g\vert_U)$ is not a rational homology manifold in this case. So any non-zero element of $I$ implies the existence of an integer root in $\widetilde{b}_{g\vert_U}(s)$, and hence the existence of another integer root in $b_f(s)$.
\end{remark}

This remark naturally leads to the following conjecture:
\begin{conjecture} \label{conj-BFunction} For any regular sequence $f_1,\dots, f_r\in \shO_X(X)$, we have
\[ b_g(s) \mid b_{g\vert_U}(s)(s+r).\]

In other words, the set $I$ is either empty or equal to the singleton $\{0\}$.
\end{conjecture}

If $r = 1$, then we always have $b_g(s) = b_{g\vert_U}(s)(s+1)$ by \cite{LeeTSProduct}. For $r > 1$, the case $b_g(s) = b_{g\vert_U}(s)$ is possible, as we see in the following example.

\begin{example} \label{eg-badbg} There is a reduced, irreducible complete intersection variety $Z = V(f_1,f_2)$ with $b_g(s) = b_{g\vert_U}(s)$. Indeed, if we let $f_1 = x^2+y^3$ and $f_2 = xy+zw$, then Macaulay2 shows that
\[ b_g(s) = b_{g\vert_U}(s) =  (s+1)(s+2)^2 \left(s+\frac{5}{2}\right) \left(s+\frac{7}{3}\right) \left(s+\frac{8}{3}\right) \left(s+\frac{11}{6}\right)^2 \left(s+\frac{13}{6}\right)^2.\]
Note that, in this example, $Z$ is not normal.
\end{example}

\begin{remark} Note that if $Z$ has rational singularities, then $0\in I$. Indeed, \cite{CDMO}*{Cor. 1.7} tells us that $Z$ having rational singularities is equivalent to $\widetilde{\alpha}(Z) = \widetilde{\alpha}(g\vert_U) > r$. But if 
\begin{center}
    $b_g(s) = b_{g\vert_U}(s)\prod_{i\in I} (s+r+i)$ with $I \subseteq \ZZ_{>0}$,
\end{center} 
then we have
\[ (s+r) \mid b_f(s) = \widetilde{b}_g(s) = \widetilde{b}_{g\vert_U}(s) \prod_{i\in I} (s+r+i),\]
which forces $(s+r) \mid \widetilde{b}_{g\vert_U}(s)$ and so $\widetilde{\alpha}(g\vert_U) \leq r$, a contradiction.
\end{remark}

\begin{corollary} Assume $Z$ has rational singularities. Then
\[ \HRH(Z) \geq \widetilde{\alpha}_{\ZZ}(Z) -r +j(Z) -1,\]
where $j(Z)$ is the number of missed jumps.
\end{corollary}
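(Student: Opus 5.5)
The plan is to reduce to the second part of \corollaryref{corprsbound}. That part gives $\HRH(Z) \geq \widetilde{\alpha}_{\ZZ}(Z) - r + j(Z) - 1$ whenever $\widetilde{\alpha}_{\ZZ}(g\vert_U) > r$. Looking at its proof, this hypothesis is used only to guarantee that we are in the case $0 \in I$. In that case $\widetilde{\alpha}_{\ZZ}(Z) \leq \widetilde{\alpha}_{\ZZ}(g\vert_U)$, and we can then combine this with the first part,
\[ \HRH(Z) \geq \widetilde{\alpha}_{\ZZ}(g\vert_U) - r + j(Z) - 1 .\]
So it suffices to show that rational singularities of $Z$ force $0 \in I$, and then to rerun that argument.

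This is exactly the content of the remark preceding the statement. By \cite{CDMO}*{Cor. 1.7}, rational singularities give $\widetilde{\alpha}(g\vert_U) > r$, so $(s+r) \nmid \widetilde{b}_{g\vert_U}(s)$. On the other hand, $(s+r) \mid b_f(s) = \widetilde{b}_{g\vert_U}(s)\prod_{i\in I}(s+r+i)$, so the factor $(s+r)$ must come from the product, i.e. $0 \in I$. Write
\[ \widetilde{b}_f(s) = \widetilde{b}_{g\vert_U}(s)\prod_{i\in I\setminus\{0\}}(s+r+i).\]
Every integer root of $\widetilde{b}_{g\vert_U}$ is then a root of $\widetilde{b}_f$, which gives $\widetilde{\alpha}_{\ZZ}(Z) \leq \widetilde{\alpha}_{\ZZ}(g\vert_U)$. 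This holds also when the right-hand side is $+\infty$, with the convention $\min \emptyset = +\infty$.

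Next we chain the inequalities. The proof of \corollaryref{corprsbound}, which uses the bound $\widetilde{\alpha}_{\ZZ}(g\vert_U) \leq p(\varphi_{g\vert_U,1}(\shO_U),F) + n + r$, \remarkref{rmk-jumpsInterpretation}, and \propositionref{prop-InequalityPRS}, gives
\[ \HRH(Z) \geq p(Q^\ZZ,F) + n - 1 \geq \widetilde{\alpha}_{\ZZ}(g\vert_U) - r + j(Z) - 1 \geq \widetilde{\alpha}_{\ZZ}(Z) - r + j(Z) - 1. \]
Note that this route never needs the hypothesis $\widetilde{\alpha}_{\ZZ}(g\vert_U) > r$ itself; it only needs its consequence $0 \in I$, which we obtained directly from rationality.

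The one step that needs care is the degenerate case $Q^\ZZ = 0$. Then $p(Q^\ZZ,F) = +\infty$, $Z$ is a rational homology manifold, and $\HRH(Z) = +\infty$, so the inequality holds trivially. Apart from this, the argument is bookkeeping, and I expect no real obstacle beyond checking that the remark's divisibility argument applies verbatim.
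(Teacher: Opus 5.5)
Your proof is correct and is essentially the paper's argument: both use the rationality criterion $\widetilde{\alpha}(g\vert_U) > r$ of \cite{CDMO} and then conclude via \corollaryref{corprsbound}. The paper notes $\widetilde{\alpha}_{\ZZ}(g\vert_U) \geq \widetilde{\alpha}(g\vert_U) > r$ and cites part (2) directly, whereas you unpack part (2)'s proof through $0 \in I$; since that hypothesis is immediate from $\widetilde{\alpha}_{\ZZ} \geq \widetilde{\alpha}$, there is no need to bypass it, though your version has the small merit of using only the inequality $\widetilde{\alpha}_{\ZZ}(Z) \leq \widetilde{\alpha}_{\ZZ}(g\vert_U)$ rather than the equality the paper asserts.
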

\begin{proof} As $Z$ has rational singularities, we have $\widetilde{\alpha}(Z) = \widetilde{\alpha}(g\vert_U) > r$. Moreover, by the discussion above, $\widetilde{\alpha}_\ZZ(Z) = \widetilde{\alpha}_\ZZ(g\vert_U) \geq \widetilde{\alpha}(g\vert_U) > r$. So the result follows by \corollaryref{corprsbound}.
\end{proof}

In any case, we can collect our findings in the following corollary.

\begin{corollary} \label{cor-QMinExpZ} The following hold:
\begin{enumerate}
    \item If $(s+\ell) \mid b_f(s)$ for some integer $\ell \neq r$, then $Q^\ZZ \neq 0$.
    \item If $b_g(s) = b_{g\vert_U}(s)(s+r)$, then $\widetilde{\alpha}_{\ZZ}(Z) < +\infty$ if and only if $Q^\ZZ \neq 0$.
    \item If $b_g(s) = b_{g\vert_U}(s)$, then $Q^\ZZ \neq 0$.
\end{enumerate}
\end{corollary}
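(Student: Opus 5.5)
We prove the three items of \corollaryref{cor-QMinExpZ} in order, each by reducing to \propositionref{prop-qneqz}, \corollaryref{cor-QVanish} and the factorization
\[ b_f(s) = \widetilde{b}_g(s) = \widetilde{b}_{g\vert_U}(s)\prod_{i\in I}(s+r+i) \]
recalled above.

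For (1), split according to the sign of $\ell - r$. If $\ell > r$, write $\ell = r+j$ with $j>0$; then \propositionref{prop-qneqz} gives $Q^\ZZ\neq 0$ at once. If $\ell < r$, the factor $(s+\ell)$ cannot come from $\prod_{i\in I}(s+r+i)$, since $I\subseteq \ZZ_{\geq 0}$. Hence $(s+\ell)\mid \widetilde{b}_{g\vert_U}(s)$, so $\widetilde{\alpha}_\ZZ(g\vert_U)<+\infty$. By the Bernstein--Sato characterization of rational homology manifolds for the hypersurface $V(g\vert_U)$ (the hypersurface case recalled in the introduction, applied on $U$), $V(g\vert_U)$ is not a rational homology manifold. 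Then \corollaryref{cor-QVanish} gives $Q^\ZZ\neq 0$.

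For (2), assume $b_g(s) = b_{g\vert_U}(s)(s+r)$, i.e.\ $I=\{0\}$. Then $\widetilde{b}_f(s) = b_f(s)/(s+r) = \widetilde{b}_{g\vert_U}(s)$, so $\widetilde{\alpha}_\ZZ(Z) = \widetilde{\alpha}_\ZZ(g\vert_U)$. Combining the hypersurface criterion for $V(g\vert_U)$ with \corollaryref{cor-QVanish},
\[ \widetilde{\alpha}_\ZZ(Z)<+\infty \iff \widetilde{\alpha}_\ZZ(g\vert_U)<+\infty \iff V(g\vert_U)\text{ is not a rational homology manifold} \iff Q^\ZZ\neq 0. \]

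For (3), assume $b_g(s) = b_{g\vert_U}(s)$, i.e.\ $I=\emptyset$. Then $b_f(s) = \widetilde{b}_{g\vert_U}(s)$. Since $(s+r)\mid b_f(s)$ always holds, we get $(s+r)\mid \widetilde{b}_{g\vert_U}(s)$, so $\widetilde{\alpha}_\ZZ(g\vert_U)\leq r<+\infty$. As in (2), this shows $V(g\vert_U)$ is not a rational homology manifold, and \corollaryref{cor-QVanish} yields $Q^\ZZ\neq 0$.

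The main point needing care is the hypersurface criterion on the open set $U$: that $V(g\vert_U)$ is a rational homology manifold exactly when $\widetilde{b}_{g\vert_U}$ has no integer root. We will justify this by restricting to affine opens covering $U$, where the defining function is regular, and taking the least common multiple of the local Bernstein--Sato polynomials. The criterion recalled in the introduction (\cite{VanCycleRHM}) then applies on each affine piece.
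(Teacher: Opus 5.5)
Your proposal is correct and follows the paper's proof essentially step for step. You use \propositionref{prop-qneqz} for $\ell>r$, and the factorization $b_f(s)=\widetilde{b}_{g\vert_U}(s)\prod_{i\in I}(s+r+i)$ with $I\subseteq\ZZ_{\geq 0}$ to force an integer root of $\widetilde{b}_{g\vert_U}$ in the other cases; when $I=\{0\}$ you get $\widetilde{b}_f=\widetilde{b}_{g\vert_U}$, and you finish with \corollaryref{cor-QVanish} plus the hypersurface criterion. The only cosmetic difference is that you apply the Bernstein--Sato characterization of rational homology manifolds directly on $U$ via an affine cover, whereas the paper cites \remarkref{rmk-HypersurfaceCase} for that same hypersurface input.
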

\begin{proof} For the first claim, if $\ell > r$, then this is the result of \propositionref{prop-qneqz}.

For $\ell < r$, use the equality
\[ b_f(s) = \widetilde{b}_{g\vert_U}(s) \prod_{i\in I} (s+r+i),\]
where $I \subseteq \ZZ_{\geq 0}$ implies that $(s+\ell) \mid \widetilde{b}_{g\vert_U}(s)$. Then we use \corollaryref{cor-QVanish} and \remarkref{rmk-HypersurfaceCase} to conclude.

For the second claim, we have $\widetilde{b}_f(s) = \widetilde{b}_g(s)/(s+r) = \widetilde{b}_{g\vert_U}(s)$, and so $\widetilde{\alpha}_\ZZ(Z) = \widetilde{\alpha}_{\ZZ}(g\vert_U)$. So the claim follows by \corollaryref{cor-QVanish} and \remarkref{rmk-HypersurfaceCase}.

For the last claim, we have $(s+r) \mid b_f(s) = \widetilde{b}_{g\vert_U}(s)$, so that $\widetilde{\alpha}_\ZZ(g\vert_U) < + \infty$ and again we use \corollaryref{cor-QVanish} and \remarkref{rmk-HypersurfaceCase}.
\end{proof}

\subsection{The case of isolated singularities}\label{sec-iso}
In this section, we discuss a deeper relation between the spectrum of an isolated local complete intersection singularity and the Hodge Rational Homology degree.

Recall that, in this case, we have the Milnor fiber, which can be defined in the following way. Let $(Z,x)$ be the germ of the isolated singularity, and let $\rho\colon (\mathcal{X},x)\to \Delta$ a smoothing with central fiber $Z$. We then let the Milnor fiber $F$ be the topological space $\mathcal{X}_t$, for $t\neq 0$, and note that its cohomology can be endowed with a mixed Hodge structure. The cohomology is nonzero only in degrees 0 and $d = \dim{Z}$, and is independent of the smoothing along with its Hodge filtration. For more details, see \cite{FLIsolated}*{\textsection 2.2 and \textsection 4}.

Following Steenbrink and the notation in \cite{FLIsolated}, let \[s_p = \dim \Gr_F^pH^d(F).\] These invariants are deeply connected to the link invariants and can be used to describe $\HRH(Z)$.

\begin{proposition}\label{propmilnor}
    Let $(Z,x)$ be an isolated local complete intersection singularity. Then $\HRH(Z)\geq k $ if and only if $s_{d-p}-s_p = 0$ for all $0\leq p\leq k$.
\end{proposition}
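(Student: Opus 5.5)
The plan is to turn the condition $\HRH(Z)\geq k$ into a vanishing statement for the Hodge filtration on the cohomology of the link $L$ of $(Z,x)$, and then to compute the Hodge numbers of the link from the Milnor fiber using the variation sequence.

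\emph{Step 1: reduction to the link.} Since the singularity is isolated, $\gamma_Z^\vee$ is an isomorphism away from $x$. By \propositionref{prop-PRSQ}(2), $\HRH(Z)\geq k$ is therefore equivalent to $F_{k-n}\cohH^r(\sigma^!Q^\ZZ)=0$, and this module is supported at $x$. I will identify its fiber at $x$, and that of $\cohH^{r-1}(\sigma^!Q^\ZZ)$, with the kernel and cokernel of the natural map $\CC_x[d]\to i_x^!\QQ^H_Z[d]$ in the relevant degrees. By the local cohomology sequence these are the mixed Hodge structures $H^{d-1}(L)$ and $H^{d}(L)$, up to a Tate twist, as in \cite{DOR1} for isolated singularities. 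After bookkeeping of indices, the expected outcome is
\[
\HRH(Z)\geq k \iff \Gr_F^p H^{d}(L)=0 \text{ for all } 0\leq p\leq k,
\]
where the Tate twist is to be pinned down in the bookkeeping. It is important that this holds for all $p\leq k$, not only $p=k$: the condition involves $F_{k-n}$ of a Hodge module supported at a point, which sees all graded pieces below.

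\emph{Step 2: Milnor fiber sequence.} For an isolated LCI singularity of dimension $d$, the Milnor fiber $F$ has reduced cohomology only in degree $d$. The Wang and variation sequences give an exact sequence of mixed Hodge structures
\[
0\to H^{d-1}(L)\to H^d_c(F)\to H^d(F)\to H^d(L)\to 0,
\]
with $H^d_c(F)\cong H^d(F)^\vee(-d)$ by Poincaré duality. Morphisms of mixed Hodge structures are strict, so $\Gr_F^p$ is exact on this sequence, and $\dim\Gr_F^p H^d_c(F)=s_{d-p}$. Writing $a_p=\dim\Gr_F^pH^{d-1}(L)$ and $b_p=\dim\Gr_F^pH^d(L)$, this yields
\[
b_p-a_p=s_p-s_{d-p}.
\]
Duality on the link gives $H^d(L)\cong H^{d-1}(L)^\vee(-d)$, hence $b_p=a_{d-p}$.

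\emph{Step 3, the main obstacle: separating $a$ and $b$.} The relation above only controls the difference $b_p-a_p$, so I need that in the range $0\leq p\leq k\leq \frac{d-3}{2}$ one of $a_p$, $b_p$ vanishes automatically. I will use semipurity of the link: $H^{d-1}(L)$ has weights $\leq d-1$ and $H^d(L)$ has weights $\geq d+1$. For $p$ small, Hodge symmetry on $\Gr^W$ together with the fact that $H^d(F)$ and $H^d_c(F)$ have Hodge types with both indices in $[0,d]$ should force $a_p=0$ whenever $b_p\neq0$ is in question. Concretely, I will argue that the low-$p$ part of $H^{d-1}(L)$ comes from $\Gr^W$ pieces whose conjugate pieces would have $q>d$, which is impossible. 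If this fails in general, I will fall back on induction on $k$ as in the proof of \theoremref{thmhypersurfaces}. Assuming $\HRH(Z)\geq k-1$, so that $b_p=0$ for $p<k$, I will show directly that $a_k=0$ by the depth result for LCI singularities: $\cohH^{r-1}(\sigma^!Q^\ZZ)$ vanishes in the needed range, which is the last line of \propositionref{prop-PRSQ}. Then $b_k=s_k-s_{d-k}$, and Step 1 finishes the equivalence. The remaining dual ambiguity between $s_p-s_{d-p}$ and $s_{d-p}-s_p$ is harmless, since only vanishing is asserted.
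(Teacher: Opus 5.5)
Your Step 2 reproduces the relation $s_{d-p}-s_p=a_p-b_p$ of \cite{FLIsolated}*{Prop. 2.11(ii)}, and $b_p=a_{d-p}$ follows from duality on the link, so that part is fine. The first problem is in Step 1, and the discrepancy there is not a Tate twist. For a Hodge module $i_{x*}H$ supported at a point, the low pieces of the Hodge filtration are, up to a fixed shift, $F_qH=F^{-q}H$. So the vanishing of $F_{k-n}\cohH^r(\sigma^!Q^\ZZ)$ kills the \emph{top} graded pieces $\Gr_F^jH^d(L)$, $d-k\le j\le d$, not an initial segment $0\le p\le k$. No fixed twist converts one range into the other.

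The correct criterion, which is \cite{DOR1}*{Cor. G} as used in the paper, is: $\HRH(Z)\ge k$ iff $\ell^{d-p,p}=b_{d-p}=0$ for $0\le p\le k$, i.e. iff $a_p=0$ for $0\le p\le k$. Your condition ``$b_p=0$ for $0\le p\le k$'' is what the same bookkeeping gives for $F_{k-n}\cohH^{r-1}(\sigma^!Q^\ZZ)=0$, i.e. part (1) of \propositionref{prop-PRSQ}. It is strictly weaker. First, $b_0=a_d=0$ always, so it would make $\HRH(Z)\ge 0$ automatic. Second, take $x_1^2+\dots+x_{2m}^2$, where $H^{d-1}(L)=\QQ(1-m)$, $H^d(L)=\QQ(-m)$, $s_m=1$ and all other $s_p=0$. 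Your condition predicts $\HRH\ge m-1$, whereas $\HRH=m-2$ by \exampleref{ex-eqg1}.

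Step 3 fails as well. Neither $a_p$ nor $b_p$ vanishes automatically once $p\ge 1$. The low-$p$ part of $H^{d-1}(L)$ consists of types $(p,q)$ with $p+q\le d-1$, and their conjugates are perfectly admissible. For instance, for the cone over a smooth hypersurface of degree $\ge d+1$ in $\mathbb P^d$ with $d\ge 5$, both $a_1$ and $b_1=a_{d-1}$ are non-zero.

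Your fallback does not repair this. The last line of \propositionref{prop-PRSQ} compares $\cohH^r$ and $\cohH^{r-1}$ at the \emph{same} level $k$, so it cannot produce $a_k=0$ from $\HRH(Z)\ge k-1$. That implication is in fact false: take the double point above with $k=m-1$, where $\HRH=k-1$ but $a_{m-1}=1$.

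What is missing, once Step 1 is corrected, is the right separation argument:
\begin{itemize}
\item You have $s_{d-p}-s_p=a_p-a_{d-p}$.
\item Because $H^{d-1}(L)$ has weights $\le d-1$, $a_{d-p}$ only involves Hodge types $(d-p,q)$ with $q\le p-1$. Their conjugates $(q,d-p)$ contribute to $a_q$ with $q<p$.
\item Hence $a_0=\dots=a_{p-1}=0$ forces $a_{d-p}=0$, and $a_d=0$ always.
\end{itemize}
Induction on $p$ then yields both implications for every $k$, with no need for the restriction $k\le\frac{d-3}{2}$. The paper runs the same induction. It obtains the separation instead from the partial-sum inequality $\sum_{p\le k}s_{d-p}\ge\sum_{p\le k}s_p$ and its equality case, \cite{FLIsolated}*{Prop. 2.12}.
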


\begin{proof}
    We will use that \begin{equation}\label{eqMF}s_{d-p} - s_p = \ell^{p,d-p-1} - \ell^{p,d-p}\end{equation} \cite{FLIsolated}*{Prop. 2.11(ii)}, and \begin{equation}\label{eqMF2}\sum_{p=0}^k{s_{d-p}} \geq \sum_{p=0}^k{s_p}\end{equation} and if equality holds, then $\ell^{d-k-1,k} = \ell^{k+1,d-k-1}=0$ \cite{FLIsolated}*{Prop. 2.12}.

    Suppose $\HRH(Z)\geq k$. Then by \cite{DOR1}*{Cor. G}, $0=\ell^{d-p,p} = \ell^{p,d-p-1}$ for $p\leq k$, and by combining \eqref{eqMF}, \eqref{eqMF2}, and using induction on $k$ we get $s_{d-p}-s_p = 0$ for all $0\leq p\leq k$.

    Conversely, suppose $s_{d-p}-s_p = 0$ for all $0\leq p\leq k$. We proceed by induction. For $k=0$, \[0=s_d -s_0 = \ell^{0,d-1} - \ell^{0,d} = \ell^{d,0},\] since $\ell^{0,d}=0$. By \cite{DOR1}*{Cor. G}, we obtain the conclusion. Suppose now that we know the result up until degree $(k-1)$. Then \[s_{d-k} - s_k = \ell^{k,d-k-1} - \ell^{k,d-k} = \ell^{d-k,k} - \ell^{(k-1)+1, d-(k-1)-1}.\]  By the second part of \eqref{eqMF2}, we obtain that $\ell^{d-k,k} = 0$, and by \cite{DOR1}*{Cor. G} we conclude.
\end{proof}

We note that in this setting, the spectral numbers are identified with the cohomology of the Milnor fiber. Indeed, \[m_{\alpha, x} = \dim\Gr_F^p H^d(F)_{\lambda},\] where $p = \lfloor d+1 - \alpha \rfloor$, and $\lambda = \exp(-2\pi i\alpha)$, since the pullback to a point $\xi \in \{x\}\times \mathbb A^r$ corresponds to picking a nearby fiber of a 1-parameter smoothing of $Z$ (see \cite{DMS}*{Rmk. 1.3 (i)} for more details). Furthermore, spectral numbers partially recover the duality classically known for hypersurface singularities.

\begin{proposition}\label{propduality}
    Let $(Z,x)$ be an isolated local complete intersection singularity. Then, for $\alpha\notin \ZZ$, \[m_{\alpha,x} = m_{d+1-\alpha,x}.\]
\end{proposition}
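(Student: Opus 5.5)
We use the identification just recorded: for $\alpha\notin\ZZ$,
\[ m_{\alpha,x} = \dim \Gr_F^{p} H^d(F)_\lambda, \qquad p = \lfloor d+1-\alpha\rfloor,\quad \lambda = \exp(-2\pi i \alpha).\]
The goal is then to establish a Hodge-theoretic duality on the non-unipotent part of $H^d(F)$, namely
\[ \dim \Gr_F^p H^d(F)_\lambda = \dim \Gr_F^{d-p} H^d(F)_{\bar\lambda} \quad \text{for } \lambda\neq 1.\]
This suffices by a direct index check. Replacing $\alpha$ by $d+1-\alpha$ sends $\lambda = e^{-2\pi i\alpha}$ to $e^{-2\pi i(d+1-\alpha)} = \bar\lambda$. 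It also sends $p = \lfloor d+1-\alpha\rfloor$ to $\lfloor \alpha\rfloor$. Since $\alpha\notin\ZZ$, we have $\lfloor \alpha \rfloor = d - \lfloor d+1-\alpha\rfloor = d-p$, as $\lfloor\beta\rfloor + \lfloor d+1-\beta\rfloor = d$ for non-integer $\beta$.

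For the duality on $H^d(F)_{\ne 1}$ we would argue as in the classical hypersurface case (Steenbrink). Choose a one-parameter smoothing $\rho$ with Milnor fibre $F$, which exists for isolated complete intersection singularities. The Milnor fibre is a manifold with boundary whose boundary is the link $K$. Consider the variation sequence
\[ H^d(F,\partial F) \to H^d(F) \to H^d(\partial F).\]
This is compatible with the limit mixed Hodge structures and the monodromy $T$. On the $\lambda\neq 1$ generalized eigenspaces, the map $H^d_c(F)_{\neq 1}\to H^d(F)_{\neq 1}$ is an isomorphism. The reason is that the link $\partial F \cong K$ carries a mixed Hodge structure on which the monodromy acts trivially, since it is the link of $Z$, independent of $t$. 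Equivalently, one can argue via the wang/variation: $\mathrm{Var}$ is invertible off the eigenvalue $1$, because $T-1$ is invertible there.

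Poincaré–Lefschetz duality gives a perfect pairing
\[ H^d_c(F)\otimes H^d(F)\to \QQ(-d),\]
which respects $T$ and the limit mixed Hodge structures. Combined with the isomorphism above, this yields a nondegenerate pairing on $H^d(F)_{\neq1}$. It pairs the $\lambda$ and $\bar\lambda$ eigenspaces (as $T$ is unitary for it) and is a morphism of MHS to $\QQ(-d)$. Hence
\[ \Gr_F^p H^d(F)_\lambda \cong \big(\Gr_F^{d-p} H^d(F)_{\bar\lambda}\big)^\vee,\]
which gives the dimension equality.

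The main obstacle is making the compatibility of the intersection pairing with the limit (Steenbrink) mixed Hodge structure rigorous in the LCI setting, rather than just topologically. Our first approach would be to phrase everything with mixed Hodge modules. On the non-unipotent part, the nearby cycles $\psi_{\rho,\neq1}$ commute with duality, $\mathbb D\psi_{\rho,\ne1}\cong \psi_{\rho,\ne 1}\mathbb D(\cdot)(\text{twist})$, and they agree with $\varphi_{\rho,\neq 1}$. Apply this to the $\QQ^H$ of the smooth total space $\mathcal X$, which is self-dual up to twist and shift. Restricting to the isolated point then gives the pairing, with $\lambda\leftrightarrow\bar\lambda$ coming from the dual monodromy. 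If conventions on the Hodge shift for non-unipotent nearby cycles cause an off-by-one, the floor identity above absorbs it, since it is precisely where $\alpha\notin\ZZ$ is used. This also explains why integral $\alpha$ is excluded: there the link's cohomology ($\ell^{p,q}$, cf. \propositionref{propmilnor}) breaks the symmetry.
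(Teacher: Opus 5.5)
Your argument is correct in outline, but it follows a different route from the paper. The paper never chooses a smoothing or uses the Milnor fibre. Near the isolated point it writes $\cQ=i_{x*}\cN$ with $\cN$ on $\{x\}\times\mathbb A^r_z$, and notes that restriction to a general point $\xi$ is non-characteristic, so $i_\xi^*\mathbf D\cQ=\mathbf D(j_\xi^*\cN)(r)[2r]$. It then uses $\cQ^{\neq\ZZ}=\Spe(B_f)^{\neq\ZZ}$ together with $\mathbf D(\Spe(B_f)^{\neq\ZZ})\cong\Spe(B_f)^{\neq\ZZ}(n)$; the latter holds because specialization commutes with duality and $B_f=\Gamma_*\QQ^H_X[n]$ is self-dual up to twist. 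This gives two expressions for $m_{\alpha,x}(\mathbf D\cQ)$, namely $m_{d+n+1-\alpha,x}(\cQ)$ and $m_{\alpha-n,x}(\cQ)$, and comparing them is the claim.

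You instead use the identification $m_{\alpha,x}=\dim\Gr_F^pH^d(F)_\lambda$ and the duality $\mathbf D\psi_{\rho,\lambda}\cong\psi_{\rho,\bar\lambda}\mathbf D(-1)$ for a one-parameter smoothing $\rho$. Once that duality is in place, your Poincar\'e--Lefschetz discussion is only motivation. The paper's version stays inside the specialization that defines the spectrum. So it does not rely on the comparison with the limit MHS of a smoothing, which the paper merely quotes from Dimca--Maisonobe--Saito, and it also shows what duality does on the integral part of $\cQ$. Yours is the classical Steenbrink argument, and it makes visible that the link, on which the monodromy is trivial, is what breaks the symmetry at integral $\alpha$.

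Two points need repair.

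First, the total space $\mathcal X$ of a one-parameter smoothing is not smooth unless $Z$ is a hypersurface singularity. Indeed $\mathfrak m_Z/\mathfrak m_Z^2=\mathfrak m_{\mathcal X}/(\mathfrak m_{\mathcal X}^2+(\rho))$, so a smooth $(d+1)$-dimensional $\mathcal X$ would force $Z$ to have embedding dimension at most $d+1$. This is harmless. Near $x$, $\mathcal X\setminus\rho^{-1}(0)$ is smooth (the nearby fibres are smooth Cartier divisors), and $\psi_\rho$ only sees this open set. So you may replace $\QQ^H_{\mathcal X}[d+1]$ by the pure, self-dual $IC^H_{\mathcal X}$, with $\mathbf D\, IC^H_{\mathcal X}\cong IC^H_{\mathcal X}(d+1)$, without changing $\psi_\rho$.

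Second, drop the remark that the floor identity would absorb an off-by-one in the twist. It would not: an error of one would produce $m_{\alpha,x}=m_{d-\alpha,x}$ or $m_{\alpha,x}=m_{d+2-\alpha,x}$ instead. The twist is in fact forced. $H^d(F)_{\neq1}$ carries the monodromy weight filtration centred at $d$, so any duality of MHS exchanging $\lambda$ and $\bar\lambda$ must read $H_{\bar\lambda}\cong H_\lambda^\vee(-d)$. This is exactly what $\mathbf D\psi_\rho=\psi_\rho\mathbf D(-1)$ and the twist $(d+1)$ above give. It yields $\dim\Gr_F^pH_\lambda=\dim\Gr_F^{d-p}H_{\bar\lambda}$, and your index check then finishes the proof.
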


\begin{proof}
    This is a consequence of duality applied to the module $\cQ$. We also show in the proof what the construction yields for $\alpha\in\ZZ$.
    
    Since we are working locally around the isolated singularity $x\in Z$,  we have that $\cQ$ is supported on $\{x\} \times \mathbb A^r$, hence $\cQ = i_{x*} \cN$ and so if $j_\xi\colon \{\xi\} \to \{x\}\times \mathbb A^r$, we have the identification
\[ i_\xi^* \cQ = j_\xi^* \cN,\]
and
\[ i_\xi^* \mathbf D \cQ = j_{\xi}^* (\mathbf D \cN) = \mathbf D j_{\xi}^! \cN = \mathbf D ( j_\xi^* \cN)(r)[2r],\]
the last equality following from the fact that $j_\xi$ is non-characteristic.

We get
\[ m_{\alpha,x}(\cQ) = \sum_{k\in \ZZ} (-1)^k \dim \Gr^F_{\lceil \alpha\rceil - d -1} \cohH^{k-r} i_\xi^*(\cQ^{\alpha+\ZZ})\]
\[ = \sum_{k\in \ZZ} (-1)^k \dim \Gr^F_{\lceil \alpha \rceil -d - 1} \cohH^{k-r}j_\xi^* (\cN^{\alpha+\ZZ}).\]

Note that for any mixed Hodge structure $H$, we have the relation
\[ \Gr^F_{\bullet} \mathbf D(H) = \mathbf D \Gr^F_{-\bullet}(H),\]
and in particular,
\[ \dim \Gr^F_{\bullet}\mathbf D(H) = \dim \Gr^F_{-\bullet}(H).\]

Hence, we have
\[ m_{\alpha,x}(\mathbf D\cQ) = \sum_{k\in \ZZ} (-1)^k \dim \Gr^F_{\lceil \alpha \rceil - d -1} \cohH^{k-r}i_\xi^*((\mathbf D \cQ)^{\alpha+\ZZ})\]
\[ = \sum_{k\in \ZZ} (-1)^k \dim \Gr^F_{\lceil \alpha \rceil - d -1} \cohH^{k-r} (\mathbf D(j_\xi^*(\cN^{-\alpha +\ZZ}))(r)[2r])\]
\[ = \sum_{k\in \ZZ} (-1)^k \dim \Gr^F_{\lceil \alpha \rceil - d -r -1} \cohH^{k+r}(\mathbf D(j_\xi^*(\cN^{-\alpha+\ZZ})))\]
\[ = \sum_{k\in \ZZ} (-1)^k \dim \mathbf D \Gr^F_{d + r+1 - \lceil \alpha \rceil} \cohH^{-k-r}(j_\xi^*(\cN^{-\alpha+\ZZ})).\]
\[ = \sum_{k\in \ZZ} (-1)^k \dim \Gr^F_{d +r+1-\lceil \alpha \rceil} \cohH^{k-r}(j_\xi^*(\cN^{-\alpha +\ZZ})).\]

Write $d + r +1 - \lceil \alpha \rceil = \lceil \mu \rceil - d -1$ where $\mu + \alpha \in \ZZ$. Thus,
\[ 2d +r + 2 - \lceil \alpha \rceil = \lceil \mu \rceil.\]

If $\alpha \in \ZZ$, then $\mu \in \ZZ$ and we get $\mu = d + n + 2 - \alpha$.

Otherwise, we write $\alpha = p - \varepsilon$ with $\varepsilon \in (0,1)$, then $\lceil \alpha \rceil = p$ and so we must have
\[ \mu = \lfloor \mu\rfloor + \varepsilon =  2 d + r +2 - p -1 + \varepsilon = 2 d +r + 1 - p + \varepsilon = d + n +1 - \alpha.\]

Hence, we get 
\[m_{\alpha,x}(\mathbf D \cQ) = \begin{cases} m_{d + n +2-\alpha,x}(\cQ) & \alpha \in \ZZ \\ m_{d + n + 1-\alpha,x}(\cQ) & \alpha \notin \ZZ\end{cases}.\]

Using the isomorphism $\cQ^{\neq \ZZ} \cong \Spe(B_f)^{\neq \ZZ}$ and $\mathbf D(\cQ^{\neq \ZZ}) \cong \Spe(B_f)^{\neq \ZZ}(n)$, we have
\[ \widehat{\Spe}(\cQ^{\neq \ZZ},x) = \widehat{\Spe}({\Spe}(B_f)^{\neq \ZZ},x) = t^{-n} \widehat{\Spe}({\Spe}(B_f)^{\neq \ZZ}(n),x) = t^{-n}\widehat{\Spe}(\mathbf D \cQ^{\neq \ZZ},x).\] The result follows.
\end{proof}

\begin{remark}
    The same duality might not hold for $m_{k,x}$, $k\in\ZZ$ \cite{DMS}*{Rmk. 1.3 (iv)}, and depends on the Milnor fiber of a 1-parameter smoothing of $Z$. Furthermore, to compare the integer spectrum numbers using the proof above, we would need to consider the spectral numbers of $\mathcal{L}$ and its dual.
\end{remark}

\begin{proof}[Proof of \theoremref{thmspectrum}]
    We show that if $\Spe_{\min,\ZZ}(Z,x)\geq k+2$, then $\HRH(Z)\geq k$ around $x$. By \propositionref{propmilnor}, it is enough to verify $s_{d-p}-s_p = 0$ for all $p\leq k$. It is immediate to see that \[s_p = \sum_{\alpha\in (d-p, d-p+1]} m_{\alpha,x}.\] Therefore, \[s_{d-p}-s_p = \sum_{\beta\in (p, p+1]} m_{\beta,x} - \sum_{\alpha\in (d-p, d-p+1]} m_{\alpha,x} = m_{p+1,x} - m_{d-p+1, x},\] where the last equality follows from \propositionref{propduality}.

    Suppose $\Spe_{\min,\ZZ}(Z,x)\geq k+2$, that is, $m_{1,x} = \cdots = m_{k+1, x} = 0$. Then by \eqref{eqMF2} we have \[ 0\leq \sum_{p=0}^k{s_{d-p}} - \sum_{p=0}^k{s_p} = - (m_{d+1,x} + \cdots + m_{d-k+1, x}), \] and thus, $m_{d+1,x} = \cdots = m_{d-k+1, x} = 0$. Therefore, $s_{d-p} = s_p$ for all $p\leq k$.
\end{proof}

\section{Examples}\label{sec-ex}

\subsection{Thom-Sebastiani examples} Many of the singularity invariants are easier to control when using defining equations in separate collections of variables (of ``Thom-Sebastiani'' type), as we see now.

We will make use of the product formula for Verdier specializations, see \cite{DMS}*{Section 3} for details. Given $Z_i \subseteq X_i$, we consider the subvariety $Z_1\times Z_2 \subseteq X_1\times X_2$. Let $Z_1 = V(f_1,\dots, f_r)$ and let $Z_2 = V(g_1,\dots, g_\rho)$, so that $Z_1\times Z_2 = V(f_1,\dots, f_r, g_1,\dots, g_\rho)$. 

We consider modules $B_f, \cB_g$ and $\cB_{(f,g)}$. Then \cite{DMS}*{Prop. 3.2} gives an isomorphism
\[ {\rm Sp}(\cB_{(f,g)},F) = {\rm Sp}(B_f,F) \boxtimes {\rm Sp}(\cB_g,F).\]

In particular, there are isomorphisms
\begin{equation} \label{eq-ProductFormula} {\rm Gr}_V^\alpha(\cB_{(f,g)},F) = \bigoplus_{\alpha_1 + \alpha_2 = \alpha} {\rm Gr}_V^{\alpha_1}(B_f,F) \boxtimes {\rm Gr}_V^{\alpha_2}(\cB_g,F).\end{equation}

\begin{example}[An example of Torrelli] \label{eg-Torrelli} In \cite{Torrelli}, it is noted that there exists a complete intersection variety $Z$ which is a rational homology manifold but such that $\widetilde{\alpha}_{\ZZ}(Z) < +\infty$. The example follows from the observation that if $f = x^2+y^2+z^2$ and $g = u^2 + v^2 +w^2$, then we have \[b_f(s) = b_g(s) = (s+1)\left(s+ \frac{3}{2}\right).\] Hence, $V(f)$ and $V(g)$ are both rational homology manifolds, and their product $Z = V(f)\times V(g) = V(f,g) \subseteq \mathbb A^6$ is also a rational homology manifold.

The Thom-Sebastiani rule for the roots of the Bernstein--Sato polynomial \cite{BMS}*{Thm. 5} gives
\[ b_{(f,g)}(s) = (s+2)\left(s+\frac{5}{2}\right)(s+3),\]
and so $\widetilde{b}_{(f,g)}(s)$ has an integer root. In this example, $\widetilde{\alpha}_\ZZ(Z) = 3$. As $3 \neq 2$, we see by \remarkref{rmk-WeirdBehavior} above that this implies $\widetilde{\alpha}_{\ZZ}(g\vert_U) < +\infty$ (it is not hard to check that it is also equal to $3$ in this case). Thus, $Q^\ZZ \neq 0$. Moreover, note that since $\widetilde{\alpha}_{\ZZ}(g\vert_U) = 3$, then \corollaryref{corprsbound} recovers the fact that $Z$ is a rational homology manifold.

In fact, we can be rather explicit using the Product Formula \eqref{eq-ProductFormula} above. First of all, as $f,g$ are homogeneous with an isolated singular point, their $V$-filtrations are easy to compute (see \cite{SaitoOnHodgeFilt}*{(4.2.1)}). As $V(f),V(g)$ are rational homology manifolds, we have that ${\rm Gr}_V^j(B_f) = {\rm Gr}_V^j(\cB_g) = 0$ for all $j \leq 0$. Thus, we have
\[ {\rm Gr}_V^\lambda(B_f) \neq 0 \implies \lambda \in (\frac{1}{2} \ZZ \setminus \ZZ) \cup \ZZ_{\geq 1},\]
and similarly for ${\rm Gr}_V^{\lambda}(\cB_g)$.

Thus, we have
\[ {\rm Gr}_V^1(\cB_{(f,g)}) = \bigoplus_{\alpha \in \frac{1}{2} \ZZ \setminus \ZZ} {\rm Gr}_V^{\alpha}(B_f) \boxtimes {\rm Gr}_V^{1-\alpha}(\cB_g),\]
where we know we cannot include any $\alpha \in \ZZ$ in the direct sum because such an $\alpha$ would have to satisfy $\alpha > 0$ and $1-\alpha > 0$ in order to be non-zero.

By similar reasoning, we have
\[{\rm Gr}_V^2(\cB_{(f,g)})= \bigoplus_{\alpha \in \frac{1}{2}\ZZ} {\rm Gr}_V^{\alpha}(B_f) \boxtimes {\rm Gr}_V^{2-\alpha}(\cB_g)\]
     \[= ({\rm Gr}_V^1(B_f)\boxtimes {\rm Gr}_V^1(\cB_g)) \oplus \bigoplus_{\alpha \in \frac{1}{2} \ZZ \setminus \ZZ} {\rm Gr}_V^{\alpha}(B_f) \boxtimes {\rm Gr}_V^{2-\alpha}(\cB_g).
    \]

The claim is that this is the decomposition in \corollaryref{cor-RSVFilt} above. It is not hard to see that \[{\rm Gr}_V^1(B_f)\boxtimes {\rm Gr}_V^1(\cB_g) = \bigcap_{i=1}^2 \ker\left(\de_{t_i} \colon {\rm Gr}_V^2(\cB_{(f,g)}) \to {\rm Gr}_V^1(\cB_{(f,g)})\right),\] using that ${\rm Gr}_V^0$ vanishes for both $B_f$ and $\cB_g$.

Finally, for $\alpha \in \frac{1}{2} \ZZ \setminus \ZZ$, we have
\[ t_1 {\rm Gr}_V^{\alpha-1}(B_f) = {\rm Gr}_V^\alpha(B_f),\]
and similarly for $\cB_g$, showing that
\[ \bigoplus_{\alpha \in \frac{1}{2}\ZZ \setminus \ZZ} {\rm Gr}_V^\alpha(B_f) \boxtimes {\rm Gr}_V^{2-\alpha}(\cB_g) = t_1 {\rm Gr}_V^1(\cB_{(f,g)}) + t_2 {\rm Gr}_V^{1}(\cB_{(f,g)}).\]

Recall from \remarkref{rmk-jumpsInterpretation}, we say that the $\ell$-th jump is missed for $\ell \in [1,r-1]\cap \ZZ$ if  $p(\cQ^{r-\ell},F) = p(\cQ^{r-\ell+1},F)$. Note that in the above example, the first jump is missed: $p(Q^\ZZ,F) = p(\cQ^{r-1},F)$.
\end{example}

\begin{example}[Failure of Thom-Sebastiani type rules] An example related to the above shows that the property of being a (partial) rational homology manifold is not well-behaved under Thom-Sebastiani sums of hypersurfaces. 

Indeed, if $f = x_1^2 + \dots + x_n^2$ and $g = y_1^2 + \dots + y_m^2$ with $n,m$ both odd, then $V(f), V(g)$ are rational homology manifolds. However, their sum $f+g$ has Bernstein--Sato polynomial \[b_{f+g}(s) = (s+1)\left(s+ \frac{n+m}{2}\right),\] and hence it has an extra integer root. In this way, the Thom-Sebastiani sum of two rational homology manifold hypersurfaces need not remain a rational homology manifold.
\end{example}

We can use our constructions to obtain equality in \cite{DOR1}*{Thm. C}. When $Z$ is LCI with ${\rm HRH}(Z) \geq 0$, this inequality takes the form
\begin{equation} \label{eq-ppBound} 2{\rm HRH}(Z) + 3 \leq {\rm codim}_Z (Z_{\rm nRS}),\end{equation}
where the right-hand side is the codimension of the non-rational homology manifold locus of $Z$.

Indeed, if $Z$ is a local complete intersection $(2k+3)$-fold with $k$-rational singularities and which is not a $\QQ$-homology manifold, then $Z$ gives an example of equality in \eqref{eq-ppBound}. Indeed, in this case, ${\HRH}(Z) = k$, as if $\HRH(Z) \geq k+1$, this would imply $Z$ is a rational homology manifold by the inequality. Thus, the non-rational homology manifold locus is isolated.

Here are two more examples of this phenomenon:

\begin{example}\label{ex-eqg1} Let $f = x_1^2 + \dots + x_{2m}^2$ for $m\in \ZZ_{\geq 2}$ and $Z = \{f=0\} \subseteq \mathbb A^{2m}$. Then $b_f(s) = (s+1)(s+m)$, and we have
\[ \widetilde{\alpha}_\ZZ(f) = m = {\rm Sp}_{\min,\ZZ}(f) = \HRH(Z) +2.\]
 As $Z$ is a hypersurface, we have
\[ {\rm lcdef}(Z) = {\rm lcdef}_{\rm gen}(Z) = 0.\]
Thus, \eqref{eq-ppBound} gives
\[ 0 + 2(m-2) + 3 \leq {\rm codim}_Z(Z_{\rm nRS}).\]
But the right-hand side is $\dim Z = 2m-1$, as $Z$ has isolated singularities and is not a rational homology manifold. So again, we see that equality holds in \eqref{eq-ppBound}.
\end{example}

\begin{example}\label{ex-eqg2}
Equality for \eqref{eq-ppBound} also holds in the example of \cite{JKSY}*{Rmk. 3.4d}. Indeed, there it is shown that for $h = x^6 + y^5 + x^3y^3 + z^5+w^3$, we have ${\rm Sp}_{\min,\ZZ}(h) = 2$, and so \[\HRH(\{h=0\}) = 0.\] Equality holds because $\{h=0\}$ has dimension $3$.
\end{example}

However, strict inequality is also possible in \eqref{eq-ppBound}:

\begin{example}[An example of strict inequality]\label{ex-neqg} Let $f = \sum_{i=1}^{2m} x_i^{m}$ for $m>2$. This defines a hypersurface $Z$ of dimension $2m-1$ with an isolated singularity. Moreover, its minimal exponent is
\[\widetilde{\alpha}(f) = \sum_{i=1}^{2m} \frac{1}{m} = 2,\]
so that $Z$ is not a rational homology manifold. As $f$ is weighted homogeneous, we have $\widetilde{\alpha}(f) = {\rm Sp}_{\min}(f) = {\rm Sp}_{\min,\ZZ}(f)$, which gives
\[ \HRH(Z) = {\rm Sp}_{\min,\ZZ}(Z) - 2 = 0.\]
Thus, $Z_{\rm nRS}$ has codimension $2m-1$ in $Z$, which is strictly larger than ${\rm lcdef}_{\rm gen}(Z) + 2\HRH(Z) + 3 = 3$.
\end{example}

\subsection{Examples with liminal singularities} Recall that a local complete intersection $Z$ is said to have $\ell$-liminal singularities for some non-negative integer $\ell$ if its singularities are $\ell$-Du Bois but not $\ell$-rational (this is equivalent to $\widetilde{\alpha}(Z) = r+\ell$). Below we study the jumps when $Z$ has $l$-liminal singularities, or more generally when $\widetilde{\alpha}(Z)$ is an arbitrary integer.

\begin{example}\label{ex-jumpsliminal} The jumps are rather explicit when $\widetilde{\alpha}(Z) = r+\ell \in \ZZ$.

For $\ell < 0$, this condition is equivalent to $\delta_f \in V^{r+\ell}B_f \setminus V^{>r+\ell}B_f$. In this case, \[\widetilde{\alpha}(Z) = {\rm lct}(X,Z) = r+\ell\] (where ${\rm lct}(-)$ stands for {\it log-canonical threshold}) and we have
\[ p(\cQ^{r+\ell},F) = p(\cQ^{r+\ell+1},F) = \dots = p(Q^\ZZ,F) = -n.\]

The claim is that $p(\cQ^{r+\ell - k},F) = k - n$ for all $k\geq 0$. Indeed, as $\delta_f \in V^{r+\ell}B_f$, we see that $F_{k-n} B_f \subseteq V^{r+\ell-k}B_f$ using that $F_{k-n}B_f = \bigoplus_{|\alpha|\leq k} \shO_X \de_t^\alpha \delta_f$. As a result, $F_{(k-1)-n} B_f \subseteq V^{>r+\ell-k}B_f$, and so we get the desired claim. Thus, in this case, the first $-\ell$ jumps are missed, and after that, every jump is hit.

For $\ell \geq 0$, this condition is equivalent to $F_{\ell - n} B_f \subseteq V^rB_f$ and $F_{\ell+1-n}B_f \not \subseteq V^{>r-1}B_f$. The last condition implies that $F_{\ell-n} \cQ^r \neq 0$, and in fact, it is easy to see that in this case, $\ell -n = p(Q^\ZZ,F)$. Moreover, by the same reasoning as above, we see that
\[ p(\cQ^{r-k},F) = p(Q^\ZZ,F) + k \text{ for all } k\geq 0,\]
proving that, in this example, no jumps are missed. 

In the case $\widetilde{\alpha}(Z) = r+\ell \geq r$, the variety $Z$ is $\ell$-Du Bois but not $\ell$-rational. As the difference between these two is measured by the property that $\HRH(Z) \geq \ell$, we know that $\HRH(Z) \leq \ell -1$. By \propositionref{prop-InequalityPRS}, using that $p(Q^\ZZ,F) = \ell -n$, we see that we actually have equality $\HRH(Z) = \ell-1$, which is guaranteed in this case also by \propositionref{prop-equalityPRS}, once we know that the first jump is not missed.
\end{example}

\bibliography{bib}

\end{document}